\documentclass[a4paper,10pt,reqno]{amsart}
\usepackage[a4paper,tmargin=40mm,bmargin=35mm,hmargin=30mm]{geometry}
\usepackage[T1]{fontenc}
\usepackage{dsfont}
\usepackage{graphicx}
\usepackage{latexsym}
\usepackage[all]{xy}
\usepackage{lmodern}
\usepackage{amsmath}
\usepackage{amssymb}
\usepackage{amsthm}
\usepackage{stmaryrd}
\usepackage{amscd}
\usepackage{tikz}
\usepackage{tikz-cd}
\usepackage{booktabs}
\usepackage{multirow}
\usepackage{multicol}
\usepackage{here}
\usepackage{aliascnt}

\usepackage{hyperref}
\hypersetup{%
  bookmarksnumbered=true,%
  colorlinks=true,%
  linkcolor=blue,%
  citecolor=blue,%
  filecolor=blue,%
  menucolor=blue,%
  urlcolor=blue,%
  bookmarksopen=true,%
  bookmarksdepth=2,%
  pageanchor=true}
\usepackage[noabbrev]{cleveref}

\newcommand{\NewTheorem}[2]{
	\newaliascnt{#1}{TheoremEnvironment}
	\newtheorem{#1}[#1]{#1}
	\aliascntresetthe{#1}
	\crefname{#1}{#1}{#2}
	\Crefname{#1}{#1}{#2}
}

\newcommand{\longiso}{\xrightarrow{\ \raisebox{-.4ex}[0ex][0ex]{$\scriptstyle{\sim}$}\ }}
\theoremstyle{definition}

\NewTheorem{Definition}{Definitions}

\theoremstyle{remark}
\NewTheorem{Remark}{Remarks}
\NewTheorem{Axiom}{Axioms}
\NewTheorem{Example}{Examples}

\NewTheorem{Observation}{Observations}
\NewTheorem{Convention}{Conventions}
\NewTheorem{Notation}{Notations}
\NewTheorem{Setting}{Settings}
\NewTheorem{Question}{Questions}
\NewTheorem{Answer}{Answers}
\NewTheorem{Conjecture}{Conjectures}
\NewTheorem{Problem}{Problems}
\NewTheorem{Solution}{Solutions}
\NewTheorem{Definition and Remark}{Definition and Remark}
\NewTheorem{Comment}{Comments}
\NewTheorem{Aim}{Aims}
\NewTheorem{Caution}{Cautions}
\NewTheorem{Exercise}{Exercises}

\theoremstyle{plain}
\NewTheorem{Proposition}{Propositions}
\NewTheorem{Lemma}{Lemmas}
\NewTheorem{Theorem}{Theorems}
\NewTheorem{Corollary}{Corollaries}

\crefname{enumi}{}{}
\Crefname{enumi}{}{}
\creflabelformat{enumi}{(#2#1#3)}
\crefname{enumii}{}{}
\Crefname{enumii}{}{}
\creflabelformat{enumii}{(#2#1#3)}
\crefname{enumiii}{}{}
\Crefname{enumiii}{}{}
\creflabelformat{enumiii}{(#2#1#3)}

\makeatletter
\renewcommand{\p@enumii}{}
\renewcommand{\p@enumiii}{}
\makeatother

\numberwithin{equation}{section}
\crefname{equation}{}{}
\Crefname{equation}{}{}
\creflabelformat{equation}{(#2#1#3)}

\newcommand{\SwapSymbols}[1]{
	\expandafter\let\expandafter\temporarysymbol\csname #1\endcsname
	\expandafter\let\csname #1\expandafter\endcsname\csname var#1\endcsname
	\expandafter\let\csname var#1\endcsname\temporarysymbol
}

\SwapSymbols{epsilon}
\SwapSymbols{phi}
\SwapSymbols{Gamma}
\SwapSymbols{Delta}
\SwapSymbols{Theta}
\SwapSymbols{Lambda}
\SwapSymbols{Xi}
\SwapSymbols{Pi}
\SwapSymbols{Sigma}
\SwapSymbols{Upsilon}
\SwapSymbols{Phi}
\SwapSymbols{Psi}
\SwapSymbols{Omega}

\newcommand{\bL}{\mathbf{L}}

\newcommand{\bbZ}{\mathbb{Z}}
\newcommand{\cA}{\mathcal{A}}

\newcommand{\cC}{\mathcal{C}}

\newcommand{\cE}{\mathcal{E}}
\newcommand{\cF}{\mathcal{F}}

\newcommand{\cI}{\mathcal{I}}

\newcommand{\cL}{\mathcal{L}}

\newcommand{\cO}{\mathcal{O}}

\newcommand{\cS}{\mathcal{S}}
\newcommand{\cT}{\mathcal{T}}
\newcommand{\cU}{\mathcal{U}}
\newcommand{\cV}{\mathcal{V}}

\newcommand{\cX}{\mathcal{X}}

\newcommand{\To}{\rightarrow}

\DeclareMathOperator{\Hom}{Hom}
\DeclareMathOperator{\RHom}{{\bf R}Hom}

\DeclareMathOperator{\Ext}{Ext}

\DeclareMathOperator{\Zg}{Zg}

\DeclareMathOperator{\Mod}{Mod}

\DeclareMathOperator{\ZSupp}{ZSupp}
\DeclareMathOperator{\suppex}{supp_{ex}}
\DeclareMathOperator{\dSpec}{Zspec}

\DeclareMathOperator{\Inj}{Inj}

\DeclareMathOperator{\fp}{fp-}

\DeclareMathOperator{\soc}{soc}
\DeclareMathOperator{\rad}{rad}
\DeclareMathOperator{\SSupp}{Zsupp}

\DeclareMathOperator{\Ann}{Ann}

\DeclareMathOperator{\Ker}{Ker}
\DeclareMathOperator{\Coker}{Coker}
\let\Im\relax
\DeclareMathOperator{\Im}{Im}

\DeclareMathOperator{\Spec}{Spec}

\DeclareMathOperator{\Ass}{Ass}
\DeclareMathOperator{\Supp}{supp}

\DeclareMathOperator{\Sp}{Sp}

\DeclareMathOperator{\Lf}{{\bf L}f}

\title{Classifying subcategories of a Grothendieck category via its spectral category}
\subjclass[2020]{18E10, 18E35, 18E40}

\keywords{Grothendieck category, localizing subcategory, locally coherent category, spectral category, thick subcategory}

\author{Reza Sazeedeh}

\address{Department of Mathematics, Urmia University, P.O.Box: 165, Urmia, Iran}
\email{rsazeedeh@ipm.ir and r.sazeedeh@urmia.ac.ir}

\begin{document}

\begin{abstract}
Let $\cA$ be a Grothendieck category. In this paper we classify subcategories of $\cA$ via a support notion defined by Krause \cite{Kr2024} based on the spectral category of $\cA$.  We show that this aligns with Nemman's support \cite{Ne1992}, and thereby extends earlier classifications of subcategories of the category of modules over  a commutative noetherian ring and subcategories of its derived category. Over a locally coherent category $\cA$, we show that  this support agrees with open subsets of the Ziegler topology on $\cA$. 
  
	\end{abstract}

\maketitle


\section{Introduction}

The classification of subcategories of a Grothendieck category $\cA$ has a long history and remains an active area of research to this day (see \cite{St2024}). This began with Gabriel \cite{G1962}, who classified localizing subcategories of the category of modules over a commutative noetherian ring $A$ using specialization-closed subsets of $\Spec A$. Subsequently various subcategories of $\Mod A$ and the derived category ${\bf D}(A)$ has been classified by several authors (see \cite{Kr2008, MT2022,Ne1992,Ta2009}). Over a Grothendieck category $\cA$ a suitable analogous of $\Spec A$ is $\Sp\cA$, the set of  isomorphism classes of indecomposable injective objects in $\cA$.
For a locally coherent category, Herzog \cite{He1997} classified  localizing subcategories of finite type of $\cA$ via a collection of open subsets of $\Sp\cA$, originally introduced by Ziegler \cite{Zi}.

Another option is  the spectral category $\Spec \cA$ introduced  by Gabriel and Oberst \cite{GO}. Its advantage is that $\Spec \cA$ is itself a Grothendieck category, allowing one to work with its subcategories rather than  with open subsets of a topology. 

Recently, Krause \cite{Kr2024} has studied ${\bf L}(\Spec\cA)$, the lattice of localizing subcategories of $\Spec\cA$. He  assigned to each $X\in\cA$, its exact support $\suppex(X)\in{\bf L}(\Spec\cA)$ which is a suitable replacement for support of a module defined by Neeman \cite{Ne1992} when one  consider it as a complex in the derived category of a commutative noetherian ring. Our main aim of this paper is to show that the Karuse's support can extend the classifications of certain subcategories of $\Mod(A)$ and ${\bf D}(A)$ of a commutative noetherian ring $A$ to Grothendieck category $\cA$ and ${\bf D}(\cA)$. 

There is a canonical functor $P:\cA\to\Spec\cA$ which identifies $\Sp\cA$ with the isomorphism classes of simple objects in $\Spec\cA$. Let $X$ be an object of $\cA$ and $X\to \cI$ be a minimal injective resolution. Then
$\suppex(X)$ equals the localizing subcategory of $\Spec\cA$ consisting of all direct
summands of coproducts of objects in $\{P(I^n)\mid n\in\bbZ\}$. As every complex in ${\bf D}^+(\cA)$ admits a minimal injective resolution  (\cite[Proposition B2]{Kr2005}), the exact support of a complex can defined analogously. 
 
 A subcategory $\Phi\in{\bf L}(\Spec\cA)$ is said to be {\it coherent} if each morphism $I^0\To I^1$ of injective objects of $\cA$ with $P(I^0), P(I^1)
 \in\Phi$ fits into an exact sequence $I^0\To I^1\To I^2$ with $P(I^2)\in\Phi$.
 
In Section 2, we prove the following theorem which extends \cite[Theorem 5.2]{Kr2008}.

\begin{Theorem}[\cref{krau}]
For $\Phi\in{\bf L}(\Spec\cA)$, the following conditions are equivalent.
\begin{enumerate}
\item $\Phi$ is coherent.
\item For every complex $X$ in ${\bf D}^+(\cA)$ we have:
\[\suppex X\subseteq \Phi \hspace{0.2cm}\Longleftrightarrow\hspace{0.2cm} \suppex H^i(X)\subseteq \Phi\text{ for all }i.\]

\item For every complex $X$ in ${\bf D}^+(\cA)$ we have:
\[\suppex(X)\subseteq \Phi \hspace{0.2cm}\Longrightarrow\hspace{0.2cm} \suppex(H^i(X))\subseteq \Phi\text{ for all }i.\]
\end{enumerate}
 \end{Theorem}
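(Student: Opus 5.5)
We prove (1)$\Rightarrow$(2)$\Rightarrow$(3)$\Rightarrow$(1). The implication (2)$\Rightarrow$(3) is trivial.

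Throughout, we use the following elementary fact. For an injective object $I$ of $\cA$ we have $\suppex(I)\subseteq\Phi$ if and only if $P(I)\in\Phi$, and this happens if and only if every indecomposable summand of the injective envelopes involved lies in $\Phi$. Consequently, if $X$ has a (not necessarily minimal) injective resolution $X\to J$ with all $P(J^n)\in\Phi$, then $\suppex X\subseteq\Phi$. Indeed, the minimal resolution is a direct summand of $J$ degreewise, and $\Phi$ is closed under direct summands.

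\textbf{(3)$\Rightarrow$(1).} Let $f\colon I^0\to I^1$ be a morphism of injectives with $P(I^0),P(I^1)\in\Phi$. Regard $f$ as a two-term complex $X$ concentrated in degrees $0,1$. It is a complex of injectives in ${\bf D}^+(\cA)$, hence homotopy equivalent to a minimal complex obtained by splitting off contractible summands. Its terms are summands of $I^0,I^1$, so $\suppex X\subseteq\Phi$. By (3), $\suppex H^1(X)=\suppex(\Coker f)\subseteq\Phi$. Let $I^2$ be the injective envelope $E(\Coker f)$. It is the degree-zero term of the minimal injective resolution of $\Coker f$, so $P(I^2)\in\suppex(\Coker f)\subseteq\Phi$. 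Finally, $I^0\to I^1\to \Coker f\to I^2$ is exact at $I^1$, so $\Phi$ is coherent.

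\textbf{(1)$\Rightarrow$(2).} The main step is a lemma: if $\Phi$ is coherent, then any object $M$ admitting an injective copresentation $0\to M\to I^0\to I^1$ with $P(I^0),P(I^1)\in\Phi$ satisfies $\suppex M\subseteq\Phi$. To prove it, apply coherence repeatedly. Coherence gives $I^2$ with $I^0\to I^1\to I^2$ exact and $P(I^2)\in\Phi$. Then apply coherence to $I^1\to I^2$, and so on. This builds an injective resolution of $M$ with all terms in $\Phi$, and the fact above then gives $\suppex M\subseteq\Phi$. It follows that the objects $M$ with $\suppex M\subseteq\Phi$, namely those with $P(E(M)),P(E(\Omega^{-1}M))\in\Phi$, are closed under kernels of maps between them and under cokernels. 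For cokernels, one uses the same resolution trick applied to the mapping cone. They are also closed under extensions, via the horseshoe lemma, since $\Phi$ is closed under finite sums.

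Now take $X\in{\bf D}^+(\cA)$.

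For ``$\Rightarrow$'', represent $X$ by its minimal injective complex $I$ with $P(I^n)\in\Phi$. Then $Z^i=\Ker(I^i\to I^{i+1})$ has copresentation $0\to Z^i\to I^i\to I^{i+1}$, so $\suppex Z^i\subseteq\Phi$ by the lemma. Likewise $B^i$ is the image of $I^{i-1}\to I^i$; the exact sequence $0\to B^i\to I^i\to I^i/B^i\to0$ together with coherence produces an injective copresentation of $B^i$ inside $\Phi$. Hence $H^i(X)=Z^i/B^i$ lies in the subcategory by the closure under cokernels.

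For ``$\Leftarrow$'', assume $\suppex H^i(X)\subseteq\Phi$ for all $i$. Build an injective resolution of $X$ by the standard Cartan–Eilenberg / truncation induction: $X$ is bounded below, and $\tau^{\le n}X$ is an iterated extension of the shifts $H^i(X)[-i]$. Triangles $X'\to X\to X''$ with $X',X''$ having resolutions in $\Phi$ give a resolution of $X$ whose terms are $J'^n\oplus J''^n$, which lie in $\Phi$. Passing to the limit over $n$, which is degreewise eventually stable for injective resolutions of bounded-below complexes, yields $\suppex X\subseteq\Phi$.

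\textbf{Main obstacle.} The hard part is the lemma, especially verifying closure under cokernels (needed for $H^i=Z^i/B^i$), since coherence only directly controls kernels and continuations of copresentations. If the cokernel argument fails, I will instead show directly that the minimal injective resolution of $H^i(X)$ can be extracted from $I$ together with coherence-generated injectives, by comparing $H^i$ to $\Coker(I^{i-1}\to Z^i)$ and embedding $Z^i\hookrightarrow I^i$.
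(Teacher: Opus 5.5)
Your proof is correct. For (3)$\Rightarrow$(1) and for the ``$\Rightarrow$'' half of (1)$\Rightarrow$(2) it is essentially the paper's argument. Your copresentation lemma, which iterates coherence to extend $0\to M\to I^0\to I^1$ to a full resolution in $\Inj_\Phi\cA$, is the engine of \cref{thi}, which shows that $\suppex^{-1}(\Phi)$ is thick. The paper then passes through $Z^i$, $B^i$, $H^i(X)$ much as you do.

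The obstacle you flag is not a real one. Since $B^i\to Z^i$ is a monomorphism, you get $\suppex H^i(X)\subseteq \suppex B^i\vee\suppex Z^i$ directly from the exactness of $\suppex$ on short exact sequences (Krause's Lemma 5.2, quoted in the paper), and no coherence is needed for this step. More generally, closure under kernels together with this two-out-of-three property gives closure under all cokernels, by passing through $\Ker f\hookrightarrow M$ and then $\Im f\hookrightarrow N$. So your fallback plan is unnecessary.

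The genuine difference is in ``$\Leftarrow$''.
\begin{itemize}
\item You glue injective resolutions along the truncation triangles $\tau^{\le n-1}X\to\tau^{\le n}X\to H^n(X)[-n]\to$ and then pass to a limit in $n$.
\item The paper instead inducts directly along the minimal resolution $I$ of $X$. It starts from $Z^t=H^t(X)$ and uses essentiality to get $P(Z^i)=P(I^i)$. The sequences $0\to Z^i\to I^i\to B^{i+1}\to 0$ and $0\to B^{i+1}\to Z^{i+1}\to H^{i+1}(X)\to 0$ then propagate membership in $\suppex^{-1}(\Phi)$ one degree at a time.
\end{itemize}
Both arguments use only the two-out-of-three property, so this direction in fact holds for every $\Phi\in\bL(\Spec\cA)$.

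The paper's version needs no limit. In yours, ``degreewise eventually stable'' should be made precise. For instance, ${\bf R}P(\tau^{\ge n+1}X)$ is concentrated in degrees $\ge n+1$, so $H^i({\bf R}P(X))$ is a quotient of $H^i({\bf R}P(\tau^{\le n}X))\in\Phi$ for $i\le n$. Equivalently, the minimal injective resolution of $\tau^{\le n}X$ agrees with that of $X$ in degrees $\le n$.

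One small inaccuracy, which you never use: $P(I)\in\Phi$ is not equivalent to all indecomposable summands of $I$ lying in $\Phi$. In a general Grothendieck category an injective may have no indecomposable summands at all; this is the continuous part of $\Spec\cA$.
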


We also prove the following theorem which extends \cite[Theorem 2.12]{Ta2009}.

 \begin{Theorem}[\cref{loc}]
 The assignment $\cL\longmapsto \suppex(\cL\cap \cA)$ induces an inclusion preserving between   \[\{\text{localizing subcategories }\cL\text{ of }{\bf D}^{+}(\cA) \text{ such that }\cL \text{ is closed under homology and }\]\[\cL\cap\cA \text{ is closed under injective envelopes} \}\]\[\longiso\{\text{ coherent subcategories  of }\Spec\cA\}.\]
The inverse map is given by $\Phi\longmapsto \suppex^{-1}(\Phi)$.
\end{Theorem}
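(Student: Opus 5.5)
The plan is to show that both assignments are well defined and that they are mutually inverse; inclusion preservation is then clear from the definitions.

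\emph{Well-definedness of $\Phi\mapsto \suppex^{-1}(\Phi)$.} Fix a coherent $\Phi$ and put $\cL=\{X\in{\bf D}^+(\cA)\mid \suppex X\subseteq\Phi\}$.

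1. By \cref{krau}(2), $X\in\cL$ if and only if $H^i(X)\in\cL$ for all $i$. Hence $\cL$ is closed under homology.

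2. Closure under shifts is clear from the definition.

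3. Closure under extensions: take a triangle $X\to Y\to Z\to$ with $X,Z\in\cL$. The long exact homology sequence exhibits each $H^i(Y)$ as an extension of a subobject of $H^i(Z)$ by a quotient of $H^i(X)$. So it suffices to know that $\suppex^{-1}(\Phi)\cap\cA$ is a Serre subcategory of $\cA$ closed under coproducts. Once that holds, step 1 gives $Y\in\cL$.

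4. Serre property and coproducts in $\cA$: I would prove this directly from coherence. For an exact sequence $0\to X'\to X\to X''\to 0$, I will compare minimal injective resolutions via the horseshoe construction. The key inputs are the following.
\begin{itemize}
\item For a subobject $X'\subseteq X$, the envelope $E(X')$ is a summand of $E(X)$.
\item The cokernel term $E(X)/X$ and its relation to $X''$ are controlled by kernels and cokernels of maps between injectives with $P$-values in $\Phi$. Coherence, together with \cref{krau}(3) applied to two-term complexes $I^0\to I^1$, guarantees that these kernels and cokernels again have support in $\Phi$.
\item For coproducts, $\Phi$ is localizing, so closed under coproducts, and $P$ commutes with coproducts; a coproduct of minimal resolutions is then controlled the same way.
\end{itemize}

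5. Closure under injective envelopes: $\suppex E(X)$ is generated by $P(E(X))$, which lies in $\suppex X\subseteq\Phi$.

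\emph{Well-definedness of $\cL\mapsto\suppex(\cL\cap\cA)$.} Given $\cL$, let $\Phi$ be the localizing subcategory generated by $\suppex X$ for $X\in\cL\cap\cA$. To show $\Phi$ is coherent, take $f\colon I^0\to I^1$ with $P(I^0),P(I^1)\in\Phi$.

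1. First show $I^0,I^1\in\cL$. Since $P(I^j)\in\Phi$, each indecomposable summand $E$ of $I^j$ satisfies $P(E)\in\suppex X$ for some $X\in\cL\cap\cA$. I then need $E\in\cL$, which I would get from:
\begin{itemize}
\item closure of $\cL\cap\cA$ under injective envelopes;
\item the fact that the terms of a minimal injective resolution of $X\in\cL$ lie in $\cL$, by induction using closure under envelopes and cokernels (the latter via homology closure of cones);
\item closure under summands and coproducts, since $\cL$ is localizing.
\end{itemize}

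2. The cone of $f$ lies in $\cL$, so $\Coker f\in\cL\cap\cA$ by homology closure.

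3. Take $I^2=E(\Coker f)$. Then $I^0\to I^1\to I^2$ is exact and $P(I^2)\in\Phi$.

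\emph{The two maps are mutually inverse.} I expect this to be the main obstacle.

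1. The identity $\suppex(\suppex^{-1}\Phi\cap\cA)=\Phi$ needs, for each simple $P(E)\in\Phi$, an object with that support; $E$ itself works. Since $\Phi$ is localizing, it is generated by such simples together with coproducts and summands of $P$ of injectives.

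2. The identity $\suppex^{-1}(\suppex(\cL\cap\cA))=\cL$: the inclusion $\subseteq$ from $\cL$ into the preimage is clear.

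3. For the reverse inclusion, take $X$ with support in $\Phi$. By \cref{krau} its homologies have support in $\Phi$, and by homology closure of $\cL$ it suffices to treat $X\in\cA$. Then every term of the minimal injective resolution $\cI$ of $X$ has $P(I^n)\in\Phi$. By the argument above each $I^n\in\cL$.

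4. Then $X\cong\cI$ is a bounded-below complex of objects of $\cL$. I would express it as a homotopy colimit of its brutal truncations, which are finite extensions of shifts of the $I^n$, and use that $\cL$ is localizing to conclude $X\in\cL$. The delicate point is justifying this homotopy colimit inside ${\bf D}^+(\cA)$; I would first try the Milnor triangle with the coproduct of truncations.
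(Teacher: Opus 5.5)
Your overall architecture matches the paper's: closure of $\suppex^{-1}(\Phi)$ under homology via \cref{krau}(2) and under envelopes; coherence of $\suppex(\cL\cap\cA)$ by showing $I^0,I^1\in\cL$ and taking $E(\Coker f)$, which is \cref{thick}; then the two composites. Two steps fail as written, though both are fixable.

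\emph{The Serre claim in your extension step is false.} You reduce closure under triangles to $\suppex^{-1}(\Phi)\cap\cA$ being a \emph{Serre} subcategory, and propose to derive this from coherence. In $\Mod\bbZ$, the subcategory $\Phi=\langle P(\mathbb{Q})\rangle$ is coherent, since a cokernel of a map of $\mathbb{Q}$-vector spaces is again one. Also $\mathbb{Q}\in\suppex^{-1}(\Phi)$. But its subobject $\bbZ$ is not: its minimal injective resolution is $0\to\bbZ\to\mathbb{Q}\to\mathbb{Q}/\bbZ\to 0$, and $P(\mathbb{Q}/\bbZ)\notin\Phi$. Your long exact sequence only needs thickness, which is \cref{thi}. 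The subobject of $H^i(Z)$ there is the kernel of $H^i(Z)\to H^{i+1}(X)$, and the quotient of $H^i(X)$ is the cokernel of $H^{i-1}(Z)\to H^i(X)$. Simpler still, closure under triangles follows at once from exactness of $\suppex$: $\suppex(Y)\subseteq\suppex(X)\vee\suppex(Z)\subseteq\Phi$.

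\emph{The indecomposable-summand argument for $I^j\in\cL$ does not cover the general case.} In a general Grothendieck category an injective need not have any indecomposable summand, since $P(I^j)$ may lie in the continuous component of $\Spec\cA$. Use instead \cref{exlem}: every object of $\suppex(\cL\cap\cA)$ is a summand of a coproduct of objects $P(I^{n}(X_\alpha))$, where the $I^n(X_\alpha)$ are terms of minimal injective resolutions of objects $X_\alpha\in\cL\cap\cA$. A summand of $P(J)$ with $J$ injective comes from a summand of $J$, and $P$ of a coproduct agrees with $P$ of its envelope. Hence $I^j$ is a summand of $E(\coprod_\alpha I^{n_\alpha}(X_\alpha))$, and your three bullets then apply. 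This is what \cref{thick}(1) does. Likewise, for $\Phi\subseteq\suppex(\suppex^{-1}(\Phi)\cap\cA)$, argue with an arbitrary $P(I)\in\Phi$ rather than with simples.

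Two smaller points. First, $\cL\subseteq\suppex^{-1}(\suppex(\cL\cap\cA))$ is not clear for complexes; it needs coherence of $\suppex(\cL\cap\cA)$ together with \cref{krau}(2). Second, your coproduct bullet is the argument of \cref{dirsum}. It needs coproducts of injectives to be injective, i.e.\ $\cA$ locally noetherian, so it does not give closure under coproducts in general. The paper's proof does not verify that closure either.

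\emph{Where you depart from the paper, you improve on it.} The paper obtains $\suppex^{-1}(\suppex(\cL\cap\cA))\subseteq\cL$ from \cref{thick}, which only treats objects of $\cA$. Passing from $H^i(X)\in\cL$ to $X\in\cL$ for a complex unbounded above is never justified there. Your telescope argument supplies this step. The brutal truncations $\sigma_{\le n}\cI$ are uniformly bounded below, so their coproduct is computed degreewise and lies in ${\bf D}^+(\cA)$. The degreewise exact sequence $0\to\coprod_n\sigma_{\le n}\cI\to\coprod_n\sigma_{\le n}\cI\to\cI\to 0$ then gives the required triangle inside $\cL$. This works directly for any $X$ with $\suppex(X)\subseteq\Phi$, because all $P(I^n)\in\Phi$ by \cref{exlem}. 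So drop your preliminary reduction to $X\in\cA$: it uses closure under homology in the wrong direction, and you do not need it.
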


A full subcategory $\cT$ of $\cA$ is called {\it thick} if  it is closed under kernels, cokernels and extensions.  The basic properties of the functor $P$ implies that for every $\Phi\in\bL(\Spec\cA)$, the subcategory $P^{-1}(\Phi)$ is closed under injective envelopes so that $\cL=^\bot P^{-1}(\Phi)$ is a localizing subcategory of $\cA$. 

We show that  if $P^{-1}(\Phi)$ is closed under products, then  $\Phi$ is  coherent if and only if  $\cL^{\bot_{\leq 1}}$ is a thick subcategory of $\cA$ (\cref{charc}).

 A subcategory $\Phi\in{\bf L}(\Spec\cA)$ is said to be {\it specialization-closed} if for each non-zero morphism $I^0\To I^1$ of injective objects in $\cA$ such that $\Im(I^0\To I^1)$ is an essential subobject of $I^1$, the condition $P(I^0)\in\Phi$ implies that $P(I^1)\in{\Phi}$. A localizing subcategory is called
\emph{stable} if it is closed under essential extensions. We extend a classification of localizing subcategories established by Garbriel \cite{G1962} for Grothendieck categories.  

\begin{Theorem}[\cref{stablesub}]
 The assignment $\cC\longmapsto P(\cC)$ induces a lattice isomorphism 
\[\{\text{stable localizing subcategories of } \cA\}\longiso\{\text{specialization-closed 
subcategories of } \Spec\cA\}.\]
The inverse map is given by $\Phi\longmapsto P^{-1}(\Phi)$.
\end{Theorem}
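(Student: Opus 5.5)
Here $P(\cC)$ means the localizing subcategory of $\Spec\cA$ generated by $\{P(X)\mid X\in\cC\}$, or equivalently by $\{P(E(X))\mid X\in\cC\}$. The plan is to show that both assignments are well defined and that they are mutually inverse. Lattice-isomorphism then follows, since both maps preserve inclusions. I will use the following facts about the canonical functor $P$.
\begin{enumerate}
\item $P(X)\cong P(E(X))$ for every $X$, since $X\to E(X)$ is essential.
\item $\Spec\cA$ is semisimple, in the sense that every object is injective and every short exact sequence splits. Hence every localizing subcategory is closed under direct summands and coproducts, and is determined by the indecomposable injectives, or more generally the injectives, that it contains.
\item $P$ preserves coproducts and sends a monomorphism to a split monomorphism.
\item $P^{-1}(\Phi)$ is closed under injective envelopes, as recalled in the introduction.
\end{enumerate}

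\textbf{Step 1: $P^{-1}(\Phi)$ is a stable localizing subcategory when $\Phi$ is specialization-closed.}

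Closure under subobjects. If $X'\subseteq X$, then $P(X')$ is a direct summand of $P(X)$, so it lies in $\Phi$.

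Closure under coproducts. This follows from (3).

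Closure under quotients. Take $X\to X''$ epi with $X\in P^{-1}(\Phi)$, and let $I^0=E(X)$, $I^1=E(X'')$. The composite $X\to X''\to I^1$ extends to a map $I^0\to I^1$ whose image contains $X''$, which is essential in $I^1$. Specialization-closedness then gives $P(I^1)\in\Phi$, hence $P(X'')\in\Phi$.

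Closure under extensions. Given $0\to X'\to X\to X''\to 0$, we have $E(X)\subseteq E(X')\oplus E(X'')$ up to a split mono. Since $P$ applied to an injective is additive, $P(X)$ is a direct summand of $P(E(X'))\oplus P(E(X''))\in\Phi$.

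Stability. This follows from (1): if $X\subseteq Y$ is essential, then $E(X)=E(Y)$.

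\textbf{Step 2: $P(\cC)$ is specialization-closed for $\cC$ stable localizing.} I first show $P^{-1}(P(\cC))=\cC$. The inclusion $\supseteq$ is clear. For $\subseteq$, take $X$ with $P(X)\in P(\cC)$. Because $\cC$ is stable, it is closed under injective envelopes (this is the standard characterization), so $P(\cC)$ is generated by the objects $P(I)$ with $I\in\cC$ injective. By (2), $P(E(X))$ is then a summand of a coproduct $P(\bigoplus I_\alpha)$ with $I_\alpha\in\cC$.

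The key point is to lift this relation back to $\cA$, and this is the main obstacle. Split mono/summand relations in $\Spec\cA$ between images of injectives should come from genuine summand relations in $\cA$. This holds because $\Hom_{\Spec\cA}(P(I),P(J))$ is $\Hom_\cA(I,J)$ modulo maps with essential kernel, and idempotents lift modulo this radical-like ideal. Granting this, $E(X)$ embeds as a summand of $\bigoplus I_\alpha\in\cC$, so $X\in\cC$. If the idempotent-lifting argument proves awkward, I would instead reduce to indecomposables, using that in a semisimple category every object is a coproduct of simples, and that simple objects $P(E)$ correspond to $\Sp\cA$.

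With $P^{-1}(P(\cC))=\cC$ in hand, specialization-closedness follows. Take $I^0\to I^1$ with essential image and $P(I^0)\in P(\cC)$. Then $I^0\in\cC$, so its image lies in $\cC$ as a quotient, and stability gives $I^1\in\cC$. Hence $P(I^1)\in P(\cC)$.

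\textbf{Step 3: the maps are mutually inverse.} Step 2 already gives $P^{-1}P(\cC)=\cC$. For the other composite, I need $P(P^{-1}(\Phi))=\Phi$. The inclusion $\subseteq$ is clear. For $\supseteq$, every object of $\Phi$ is a summand of $P(I)$ for some injective $I$, since $P$ is essentially surjective up to summands, being the localization of the category of injectives. If $P(I)\in\Phi$, then $I\in P^{-1}(\Phi)$, and the object lies in the localizing subcategory generated, which gives equality.
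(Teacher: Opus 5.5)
Your proof is correct, but Step 1 takes a genuinely different route from the paper.

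The paper gets essential closure of $P^{-1}(\Phi)$ from Krause's Theorem 3.3. It then uses Krause's Lemma 5.11 to reduce the claim that $P^{-1}(\Phi)$ is stable localizing to the equality $P^{-1}(\Phi)=\cA_\Phi$. It proves that equality by applying specialization-closedness repeatedly along a minimal injective resolution $0\to M\to E^0\to E^1\to\cdots$, where each $E^i\to E^{i+1}$ has essential image $Z^{i+1}$.

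You instead check the closure properties by hand. You observe that closure under subobjects, extensions, coproducts and essential extensions holds for $P^{-1}(\Phi)$ for \emph{every} $\Phi\in\bL(\Spec\cA)$. So specialization-closedness is needed only once, for the map $E(X)\to E(X'')$ extending an epimorphism $X\to X''$. This is more elementary and self-contained, and it avoids Lemma 5.11 entirely. The paper's route, in exchange, records along the way that $P^{-1}(\Phi)=\cA_\Phi$. That identifies these subcategories as cut out by exact support, which ties the result to the cohomologically stable subcategories treated elsewhere in the paper.

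For Steps 2 and 3 the paper simply cites Krause's Theorem 3.3 for $P^{-1}(P(\cC))=\cC$ and $P(P^{-1}(\Phi))=\Phi$. Its specialization-closedness argument is the same as yours. If you want to reprove $P^{-1}(P(\cC))=\cC$ yourself, two adjustments help:
\begin{enumerate}
\item The coproduct $\bigoplus_\alpha I_\alpha$ need not be injective, so the description of $\Hom_{\Spec\cA}$ as maps modulo those with essential kernel does not apply to it directly. Work with $J=E(\bigoplus_\alpha I_\alpha)$ instead; it lies in $\cC$ by stability.
\item You do not need idempotent lifting. A section/retraction pair between $P(E(X))$ and $P(J)$ is represented by $f\colon E(X)\to J$ and $g\colon J\to E(X)$ with $\Ker(gf-1)$ essential. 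For an injective object, $1-h$ is an automorphism whenever $\Ker h$ is essential. Hence $gf$ is invertible and $E(X)$ is a direct summand of $J\in\cC$.
\end{enumerate}
Also, the inclusion $\Phi\subseteq P(P^{-1}(\Phi))$ is immediate, since $P$ is the identity on objects.
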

 In Section 3, we assume that $\cA$ is locally coherent. A full subcategory of $\cA$ is called \emph{essentially closed} if it
  is closed under arbitrary coproducts, subobjects, and essential
  extensions. For a class $\cX$ of $\cA$ let
$\langle \cX\rangle$ denote the smallest essentially closed subcategory of $\cA$  containing $\cX$. Krause \cite{Kr2024} defined the support of $\cX$ as $\langle P(\cX)\rangle$. In $\Spec\cA$ essentially closed subcategories are precisely localizing subcategories.  For an arbitrary subcategory $\cC$ of $\fp\cA$, {\it the Ziegler support of} $\cC$ is defined as $\SSupp(\cC)=\langle P(\cO(\cC)\rangle$, where $$\cO(\cC)=\{E\in\Sp\cA\mid \Hom(M,E)\neq 0\text{ for some } M\in\cC\}$$ is an open subset of $\Sp\cA$ in Ziegler topology. We show that the Ziegler support and the support of localizing subcategories of finite type of $\cA$ defined by Krause \cite{Kr2024} are closely related to open subsets in the Ziegler topology of $\cA$ (\cref{lft}). To be more precise, if $\cC$ is a localizing subcategory of finite type of $\cA$, then 
 $\Supp(\cC)_d=\SSupp (\cC\cap\fp\cA)$ and 
 $P^{-1}(\Supp(\cC))\cap \Sp\cA=\cO(\cC\cap\fp\cA)$, where $\Supp(\cC)_d$ is the dicrete component of $\Supp(\cC)$. Set
\begin{itemize}
\item $\Lf(\cA)=$ the lattice of localizing subcategories of finite type of $\cA$ ;
 
\item ${\bf L}(\fp\cA)=$  the lattice of Serre subcategories of $\fp\cA$;

\item $\cE(\Sp\cA)=\{\langle\cO(\cC)\rangle\mid \cC \text{ is a subcategory of }\fp\cA\}$;

\item  $\bL(\dSpec\cA)=\{\SSupp(\cC)\mid \cC \text{ is a subcategory of }\fp\cA\}$;

 \end{itemize}
 
 The following theorem provide some classification for localizing subcategories of finite type of $\cA$.  
\begin{Theorem}[\cref{char1,coch}] 
There are the following lattice isomorphisms:

\[\begin{tikzcd} 
\Lf(\cA)  \arrow[rightarrow,yshift=.75ex,rr,"(-)\cap\fp\cA"]
 	&& \arrow[rightarrow,yshift=-.75ex,ll,"\overrightarrow{(-)}"] 
 	\bL(\fp\cA) \arrow[rightarrow,yshift=.75ex,rr,"\SSupp(-)"]
 	&& \arrow[rightarrow,yshift=-.75ex,ll,"\SSupp^{-1}(-)"] 
 	\bL(\dSpec\cA) \arrow[rightarrow,yshift=.75ex,rr,"P^{-1}(-)"]
 	&& \arrow[rightarrow,yshift=-.75ex,ll,"P(-)"] 
 	\cE(\Sp\cA)
\end{tikzcd}.\]

Moreover there is an inclusion-reversing bijection
\[\begin{tikzcd}
\{\Phi\in{\bf L}(\Spec \cA)\mid P^{-1}(\Phi)\text{ is closed under products and filtered colimits}\} \arrow[rightarrow,yshift=1.25ex,rr,"^\bot(P^{-1}(-))"]
 	&& \arrow[rightarrow,yshift=-1.25ex,ll,"P((-)^\bot)"] 
\Lf(\cA)
\end{tikzcd}.\]

\end{Theorem}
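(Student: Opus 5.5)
We prove the chain of lattice isomorphisms one link at a time, and then treat the final bijection separately.

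\emph{First link.} This is Herzog's/Krause's classical correspondence in a locally coherent category. A localizing subcategory $\cC$ of finite type is determined by $\cC\cap\fp\cA$, since $\cC=\overrightarrow{\cC\cap\fp\cA}$, the closure under filtered colimits. Conversely, for a Serre subcategory $\cS$ of $\fp\cA$, the subcategory $\overrightarrow{\cS}$ is localizing of finite type and satisfies $\overrightarrow{\cS}\cap\fp\cA=\cS$. Both maps clearly preserve inclusion, so they are lattice isomorphisms.

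\emph{Second link.} We use \cref{lft}. For $\cC\in\Lf(\cA)$ it gives
\[\SSupp(\cC\cap\fp\cA)=\Supp(\cC)_d \quad\text{and}\quad P^{-1}(\Supp(\cC))\cap\Sp\cA=\cO(\cC\cap\fp\cA).\]
The key point is that $\cO(\cS)$ determines a Serre subcategory $\cS$ of $\fp\cA$: an object $M\in\fp\cA$ lies in $\cS$ if and only if $\Hom(M,E)=0$ for all $E\notin\cO(\cS)$. This is the Herzog bijection between Serre subcategories of $\fp\cA$ and Ziegler-open subsets of the form $\cO(\cS)$. Consequently $\SSupp$ is injective on $\bL(\fp\cA)$, and the inverse is
\[\SSupp^{-1}(\Phi)=\{M\in\fp\cA\mid \SSupp(M)\subseteq\Phi\}.\]
Surjectivity onto $\bL(\dSpec\cA)$ comes from replacing an arbitrary $\cC\subseteq\fp\cA$ by the Serre subcategory it generates, which has the same $\cO$. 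That $\cO$ is unchanged requires checking that $\cO(-)$ respects subobjects, quotients and extensions in $\fp\cA$. This holds because the $E$ are injective, so $\Hom(-,E)$ is exact.

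\emph{Third link.} This is formal. An object $P(E)$ with $E$ indecomposable injective is a simple object of $\Spec\cA$. The essentially closed (equivalently, localizing) subcategory $\langle P(\cO)\rangle$ consists of summands of coproducts of these simples. Hence $P^{-1}$ of it, intersected with $\Sp\cA$, is exactly $\cO$, and its $\langle\cdot\rangle$-closure recovers the element of $\cE(\Sp\cA)$. So $P$ and $P^{-1}$ are mutually inverse, and all maps are inclusion-preserving with inclusion-preserving inverses, hence lattice isomorphisms.

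\emph{Final bijection.} Given $\Phi$ with $\cT=P^{-1}(\Phi)$ closed under products and filtered colimits, recall from the paper that $\cT$ is closed under injective envelopes, so ${}^\bot\cT$ is localizing. Since $\cT$ is a torsion-free class of a hereditary torsion theory (closure under products, subobjects, extensions and envelopes), it follows that $\cT=({}^\bot\cT)^\bot$. In a locally coherent category, closure of $\cT$ under filtered colimits is equivalent to ${}^\bot\cT$ being of finite type. Conversely, for $\cC\in\Lf(\cA)$ the class $\cC^\bot$ is closed under products and filtered colimits. We then show $P^{-1}(P(\cC^\bot))=\cC^\bot$, using that $P$ reflects membership in classes closed under injective envelopes and essential extensions, and that $P(\cC^\bot)$ is localizing in $\Spec\cA$. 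The maps are evidently inclusion-reversing.

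The main obstacle is the second link, specifically injectivity of $\SSupp$: we must show that the discrete support remembers the Serre subcategory. Our plan is to reduce this through \cref{lft} to the Herzog correspondence, taking care that the discrete component $(\cdot)_d$ loses no information about the indecomposable injectives involved.
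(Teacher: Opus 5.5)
Your proposal is correct and follows essentially the paper's route. You use Krause's Corollary 2.10 for $\Lf(\cA)\cong\bL(\fp\cA)$ and Herzog's Theorem 3.8 for the second link, together with the fact that the simples in $\langle P(\cO)\rangle$ are exactly the $P(E)$ with $E\in\cO$. The third link is Krause's correspondence between essentially closed subcategories of $\cA$ and localizing subcategories of $\Spec\cA$, and the final bijection comes from the hereditary torsion theory $({}^\bot\cT,\cT)$ with $\cT=P^{-1}(\Phi)$.

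The differences are cosmetic. You recover $\cO(\cS)$ from $\SSupp(\cS)$ through \cref{lft} rather than directly. You also obtain lattice isomorphisms from inclusion-preserving bijections with inclusion-preserving inverses, instead of citing \cref{four,prp,prp1}.
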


In Section 3, we  show that our results and definitions align with the classical ones in locally noetherian categories, specifically in the category of modules over a commutative noetherian ring.

\section{In the case of Grothendieck categories}

Let $\cA$ be a Grothendieck category.  A full subcategory of $\cA$ is
\emph{localizing} if it is closed under subobjects, quotients,
extensions, and arbitrary coproducts. Let $G$ be a generator of $\cA$ and so $\cA$ is locally small. Then any localizing subcategory $\cU$ can be generated by the set of of quotients of $G$. This implies that the localizing subcategories of
$\cA$ form a set. The localizing subcategories are
partially ordered by inclusion and closed under arbitrary
intersections. Then they form a complete lattice which we denote by
${\bf L}(\cA)$. 

Let $\cU\in{\bf L}(\cA)$ and $X\in\cA$. The {\it radical functor} corresponding to $\cU$ is denoted by $t_{\cU}$ where $t_{\cU}(X)$ is the largest subobject of $X$   belonging to $\cU$. The join in ${\bf L}(\cA)$ is defined via the following lemma.

\medskip 
\begin{Lemma}[\cite{Kr2024}, Lemma 2.2]\label{krlem}
 Let $(\cU_\alpha)$ be a family in
${\bf L}(\cA)$ and set $\cU=\bigvee_\alpha\cU_\alpha$. For any $X\in\cA$ we have $t_\cU(X)=\sum_\alpha t_{\cU_\alpha}(X)$. In particular,
$t_\cU(X)=0$ if and only if $t_{\cU\alpha}(X)=0$ for all $\alpha.$
\end{Lemma}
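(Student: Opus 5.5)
We argue directly from the definitions: the join $\cU=\bigvee_\alpha\cU_\alpha$ is the smallest localizing subcategory containing every $\cU_\alpha$, and $t_\cU(X)$ is the largest subobject of $X$ lying in $\cU$. Put $S=\sum_\alpha t_{\cU_\alpha}(X)\subseteq X$. We prove $S=t_\cU(X)$ by showing the two inclusions.

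First, $S\subseteq t_\cU(X)$. The subobject $S$ is the image of the canonical map $\coprod_\alpha t_{\cU_\alpha}(X)\to X$. Each $t_{\cU_\alpha}(X)$ lies in $\cU_\alpha\subseteq\cU$. Since $\cU$ is closed under coproducts, the coproduct lies in $\cU$, and since $\cU$ is closed under quotients, its image $S$ lies in $\cU$. By maximality of $t_\cU(X)$ we get $S\subseteq t_\cU(X)$.

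For the reverse inclusion, consider the full subcategory
\[\cV=\{Y\in\cA\mid t_\cU(Y)=\textstyle\sum_\alpha t_{\cU_\alpha}(Y)\}.\]
Trying to show directly that $\cV$ is localizing is awkward, so we use an auxiliary class instead. Let
\[\cW=\{Y\in\cA\mid \textstyle\sum_\alpha t_{\cU_\alpha}(Y)=Y\}.\]
\begin{enumerate}
\item $\cW$ contains each $\cU_\alpha$, since $t_{\cU_\alpha}(Y)=Y$ for $Y\in\cU_\alpha$.
\item $\cW$ is closed under quotients. If $Y\to Y''$ is an epimorphism, then $t_{\cU_\alpha}(Y)$ maps into $t_{\cU_\alpha}(Y'')$, because images of objects of $\cU_\alpha$ lie in $\cU_\alpha$. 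Hence $\sum_\alpha t_{\cU_\alpha}(Y'')$ contains the image of $Y$, which is all of $Y''$.
\item $\cW$ is closed under coproducts, since $t_{\cU_\alpha}$ commutes with coproducts on the level of subobjects.
\end{enumerate}
Closure under subobjects and extensions is the main obstacle: $\cW$ is exactly the class of objects generated by $\bigcup_\alpha\cU_\alpha$ in one step, and it need not be localizing.

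To get around this, we avoid describing $\cU$ explicitly and pass to the quotient. Set $\bar X=X/S$. We claim $t_{\cU_\alpha}(\bar X)=0$ for every $\alpha$. Suppose not, and let $Z/S=t_{\cU_\alpha}(X/S)\neq 0$. Then $Z$ is an extension of $Z/S\in\cU_\alpha$ by $S$. The subobject $t_{\cU_\alpha}(X)\subseteq S$ gives no immediate contradiction. Instead we use the torsion-theoretic fact that $t_{\cU_\alpha}(X/t_{\cU_\alpha}X)=0$, which follows from closure of $\cU_\alpha$ under extensions. Applying this to $X/S$ inductively is still not enough.

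The cleaner route is through the right orthogonal. Define
\[\cT=\{Y\mid \Hom(U,Y)=0\text{ for all }U\in\textstyle\bigcup_\alpha\cU_\alpha\}=\bigcap_\alpha\{Y\mid t_{\cU_\alpha}Y=0\}.\]
The class ${}^\perp\cT$ is a localizing subcategory, namely the torsion class of the hereditary torsion theory cogenerated by the injective envelopes of objects of $\cT$. It contains all the $\cU_\alpha$. It is also the smallest such class: any localizing $\cU'\supseteq\cU_\alpha$ for all $\alpha$ satisfies $\cU'^{\perp}\subseteq\cT$, so ${}^\perp\cT\subseteq\cU'$. Hence $\cU={}^\perp\cT$, and therefore $t_\cU(Y)=0$ if and only if $Y$ is torsion-free for $\cU$, which holds if and only if $Y\in\cT$. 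This proves the ``in particular'' statement.

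For the full formula, iterate the quotient. Let $X_0=X$ and $X_{\beta+1}=X_\beta/\sum_\alpha t_{\cU_\alpha}(X_\beta)$, taking colimits of the kernels at limit ordinals. The kernels of $X\to X_\beta$ lie in $\cU$, since $\cU$ is closed under extensions and directed unions. The process stabilizes at some $X_\gamma\in\cT$, and then $t_\cU(X)=\Ker(X\to X_\gamma)$.

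So equality $t_\cU(X)=S$ amounts to showing that the process already stops at the first step, that is, $t_{\cU_\alpha}(X/S)=0$. This is the crux. We would try to prove it by showing that the extension of $Z/S\in\cU_\alpha$ by $S$ splits off enough to force $Z/S$ to map into $t_{\cU_\alpha}(X)$. If that fails in general, we would fall back on the argument in \cite{Kr2024}, which uses injective objects: $t_\cU(X)$ is the kernel of $X\to E$ for a suitable product $E$ of $\cU$-torsion-free injectives. The equality then reduces to each indecomposable injective component of $E(X/S)$ being $\cU_\alpha$-torsion-free.
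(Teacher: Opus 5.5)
\textbf{There is a genuine gap: the displayed equality is never proved, and it cannot be, because it is false.}

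\textbf{What you do establish is correct.}
\begin{enumerate}
\item The inclusion $S=\sum_\alpha t_{\cU_\alpha}(X)\subseteq t_\cU(X)$.
\item The identification $\cU={}^\perp\cT$ with $\cT=\bigcap_\alpha\cU_\alpha^{\perp}$. This needs $\cT$ to be closed under injective envelopes, which holds because each $\cU_\alpha^\perp$ is. It yields the ``in particular'' statement.
\item The transfinite description of $t_\cU(X)$ as $\Ker(X\to X_\gamma)$.
\end{enumerate}
However, the equality $t_\cU(X)=S$ rests entirely on the step you yourself call the crux, namely $t_{\cU_\alpha}(X/S)=0$ for all $\alpha$. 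For that step you only offer strategies you ``would try'' and a fallback, so the main claim is not proved.

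\textbf{The gap cannot be closed.} Your suspicion that $\mathcal{W}$ need not be localizing is exactly where it breaks. Take $\cA$ to be the category of representations $V_1\xrightarrow{\phi}V_2$ of the quiver $1\to 2$ over a field $k$. Let $\cU_1=\{V\mid V_2=0\}$ and $\cU_2=\{V\mid V_1=0\}$; both are localizing. For $X=(k\xrightarrow{1}k)$, the only proper nonzero subobject is $(0\to k)$. Hence $t_{\cU_1}(X)=0$ and $t_{\cU_2}(X)=(0\to k)=S$. But $X$ is an extension of $X/S=(k\to 0)\in\cU_1$ by $S\in\cU_2$, so $t_{\cU_1\vee\cU_2}(X)=X\neq S$. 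Here $t_{\cU_1}(X/S)=X/S\neq 0$, so your iteration genuinely needs two steps.

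A commutative instance: over $A=k[x,y]$, let $\cU_1,\cU_2$ be the $x$-power and $y$-power torsion modules. Then $X=A/(xy)$ is an extension of $A/(x)\in\cU_1$ by $(x)/(xy)\cong A/(y)\in\cU_2$. Yet $t_{\cU_1}(X)+t_{\cU_2}(X)=(y)/(xy)+(x)/(xy)=(x,y)/(xy)\neq X$.

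\textbf{What is actually true.} The paper supplies no argument beyond the citation. In a general Grothendieck category, only the inclusion $\sum_\alpha t_{\cU_\alpha}(X)\subseteq t_\cU(X)$ and the vanishing criterion hold, and these are exactly what your orthogonality argument proves. The sum formula is valid when every short exact sequence splits (e.g.\ in $\Spec\cA$). In that case $\mathcal{W}$ is closed under subobjects, which are direct summands, and under extensions, so your first approach goes through verbatim.

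Note also that the paper's later uses of the sum formula fail on the same example. For instance, \cref{serrtwo}(i) breaks because $A/(xy)$ is not a sum of an $x$-torsion and a $y$-torsion submodule.
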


\medskip

\begin{Definition}
A Grothendieck category $\cC$ is a \emph{spectral category} if every short exact sequence in $\cC$ splits. In this case, every object $X\in\cC$ admits a decomposition \[X=\soc(X)\oplus\rad(X),\] where the
\emph{socle} $\soc(X)$ is the sum of all simple subobjects and the
\emph{radical} $\rad(X)$ is the intersection of all maximal
subobjects. Then for any subcategory $\cX$ of $\cC$ we we can define full subcategories 
\[\cX_{\mathrm d}:=\{X\in\cX\mid\soc(X)=X\}\qquad\text{and}\qquad
  \cX_{\mathrm c}:=\{X\in\cX\mid\rad(X)=X\}\]   
This induces a decomposition $\cX=\cX_d\times X_d$ where $\cX_d$ and  $\cX_c$ are called \emph{discrete} and \emph{continuous} components  of $\cX$, respectively. 
For a Grothendieck subcategory $\cA$, we associate to $\cA$ a spectral category $\Spec\cA$. The objects of $\Spec\cA$ is 
the same as the objects of $\cA$ and for objects $C$ an $D$ in $\cA$, we set \[
\Hom_{\Spec\cA}(C,D)=\underset{\rightarrow}{\rm lim}\Hom_\cA(C',D)\]
where the direct limits is taken over a family of essential subobjects $C'$  of $C$. There is a canonical left exact functor $P:\cA\to\Spec\cA$ which leaves objects unchanged and carries each essential monomorphism in $\cA$ into an isomorphism in $\Spec\cA$ (for more details, see \cite{GO,St}).
\end{Definition}

\begin{Definition}
  A full subcategory of $\cA$ is called \emph{essentially closed} if it
  is closed under arbitrary coproducts, subobjects, and essential
  extensions.
\end{Definition}
The essentially closed subcategories are partially ordered by inclusion and
closed under arbitrary intersections; so they form a complete
lattice. A theorem due to Krause \cite[Theorem 3.3]{Kr2024} shows that there is a lattice isomorphism between  essentially closed subcategories of $\cA$ and localizing subcategories of $\Spec\cA$. For a class $\cX$ of $\cA$ let
$\langle \cX\rangle$ denote the smallest essentially closed subcategory of $\cA$  containing $\cX$.

The following definition was given by Krause \cite{Kr2024} for objects in $\cA$. Since any complex in ${\bf D}^+(\cA)$ admits a minimal injective resolution (see \cite[Proposition B.2]{Kr2005}), we define it for complexes in ${\bf D}^+(\cA)$.

\begin{Definition}
For a Grothendieck category $\cA$, consider the derived
  category ${\bf D}(\cA)$ and the right derived functor
\[{\bf R}P\colon{\bf D}(\cA)\longrightarrow{\bf D}(\Spec\cA).\]
For an object $X\in{\bf D}^+(\cA)
$, the \emph{exact support} of $X$ is defined as

 \[ \suppex(X):=\inf\{\cU\in{\bf L}(\cA)\mid H^i({\bf R}
  P(X))\in\cU \text{ for all } i\in\bbZ\}.\]
  
  The function $\suppex$ is {\it exact} as it 
 satisfies the following conditions (see \cite[Lemma 5.2]{Kr2024}):

\begin{enumerate}
  \item for all objects $X,Y\in{\bf D}^+(\cA)$ one has $\suppex(X\oplus Y)=\suppex(X)\vee\suppex(Y)$, and
  \item for each exact triangle  $X_1\to X_2\to X_3\to X_1[1]$ in ${\bf D}^+(\cA)$
    one has
    \[\suppex(X_i)\subseteq\suppex(X_j)\vee\suppex(X_k)\qquad\text{when}\qquad\{i,j,k\}=\{1,2,3\}.\]
  \end{enumerate}
\end{Definition}
The exactness is inspired by the notion of support datum for tensor traingulated categories introduced by Balmer \cite{Ba2005}. For $\cX\subseteq{\bf D}^+(\cA)$, we set $\suppex(\cX):=\bigvee_{X\in\cX}\suppex(X)$. 

 The following lemma \cite[Lemma 5.1]{Kr2024} has been proved for objects in $\cA$. It can be extended for complex in ${\bf D}^+(\cA)$ as any complex $X\in{\bf D}^+(\cA)$ admits a minimal injective resolution $\cI=0\To I^t\To I^{t+1}\To\dots $ such that $I^i$ is an injective envelope of $Z^i=\Ker(I^i\To I^{i+1})$ for each $i$ (see \cite[Lemma B.1 and Proposition B.2]{Kr2005}.

\medskip
\begin{Lemma}\label{exlem}
Let $X$ be an object of ${\bf D}^+(\cA)$ and $X\to \cI$ be a minimal injective resolution. Then
$\suppex(X)$ equals the localizing subcategory of $\Spec\cA$ consisting of all direct
summands of coproducts of objects in $\{P(I^n)\mid n\in\bbZ\}$.
\end{Lemma}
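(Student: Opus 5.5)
The idea is to compute ${\bf R}P(X)$ explicitly from the minimal injective resolution, and then use the fact that $\Spec\cA$ is spectral.

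First I would replace $X$ by a minimal injective resolution $\cI=(0\to I^t\to I^{t+1}\to\cdots)$, which exists by \cite[Lemma B.1 and Proposition B.2]{Kr2005}. The complex $\cI$ consists of injectives, so ${\bf R}P(X)\cong P(\cI)$, the complex $P(I^t)\to P(I^{t+1})\to\cdots$ in $\Spec\cA$.

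Next I would compute the cohomology of $P(\cI)$. Set $Z^n=\Ker(I^n\to I^{n+1})$. Since $P$ is left exact, $P(Z^n)=\Ker(P(I^n)\to P(I^{n+1}))$. Minimality says $Z^n\to I^n$ is an essential monomorphism, and $P$ carries essential monomorphisms to isomorphisms. Hence $P(Z^n)\to P(I^n)$ is an isomorphism, so every differential of $P(\cI)$ is zero. It follows that $H^n({\bf R}P(X))\cong P(I^n)$ for all $n$, with $P(I^n)=0$ for $n<t$.

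Consequently $\suppex(X)$ is the smallest localizing subcategory of $\Spec\cA$ containing $\{P(I^n)\mid n\in\bbZ\}$. The infimum in the definition is read in ${\bf L}(\Spec\cA)$, as in \cite{Kr2024}, since the cohomology lives in $\Spec\cA$.

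Finally I would identify this localizing subcategory as the class $\cS$ of direct summands of coproducts of the $P(I^n)$. Clearly $\cS$ is contained in any localizing subcategory containing the $P(I^n)$, so it suffices to show $\cS$ is itself localizing. Because every short exact sequence in $\Spec\cA$ splits, subobjects and quotients of an object are direct summands of it, and extensions are direct sums. So closure under subobjects, quotients and extensions reduces to closure under direct summands and finite direct sums. Both are immediate, as is closure under coproducts, since a coproduct of summands of coproducts is a summand of a coproduct.

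The main obstacle is the second step, namely checking that the differentials of $P(\cI)$ vanish. This relies on left exactness of $P$ together with the essentiality of $Z^n\subseteq I^n$. The earlier terms of the complex need care only in the lowest degree $t$, where $Z^t$ is essential in $I^t$ by minimality as well.
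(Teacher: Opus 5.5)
Your proof is correct and takes essentially the same approach as the paper. The paper only cites Krause's argument for objects (\cite[Lemma 5.1]{Kr2024}) plus the existence of minimal injective resolutions in ${\bf D}^+(\cA)$ from \cite{Kr2005}, whereas you write that argument out directly for complexes: the differentials of $P(\cI)$ vanish because $P$ is left exact and $Z^n\subseteq I^n$ is essential, and localizing subcategories of a spectral category are exactly the classes of direct summands of coproducts. You are also right that the infimum must be taken in ${\bf L}(\Spec\cA)$ rather than ${\bf L}(\cA)$, which corrects a typo in the paper's definition.
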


\medskip
Let $\Inj\cA$ denote the class of injective objects of $\cA$. For every $\Phi$ in ${\bf L}(\Spec\cA)$, we set $$\Inj_{\Phi}\cA:=\{I\in\Inj\cA\mid P(I)\in\Phi\}.$$

\begin{Definition} Let $\Phi\in{\bf L}(\Spec\cA)$.
\begin{enumerate}
\item $\Phi$ is said to be {\it coherent} if each morphism $I^0\To I^1$ in $\Inj_{\Phi}\cA$ fits into an exact sequence $I^0\To I^1\To I^2$ with $I^2\in\Inj_\Phi\cA$.

\item $\Phi$ is said to be {\it specialization-closed} if for each non-zero morphism $I^0\To I^1$ of injective objects in $\cA$ such that $\Im(I^0\To I^1)$ is an essential subobject of $I^1$, the condition $I^0\in\Inj_{\Phi}\cA$ implies that $I^1\in\Inj_{\Phi}\cA$.
\end{enumerate}
\end{Definition}

\medskip 
\begin{Remark}The coherent subcategories are
partially ordered by inclusion and closed under arbitrary intersections. To be more precise, let $(\Phi_\alpha)$ be a family of coherent subcategories in ${\bf L}(\Spec\cA)$ and let $I^0\To I^1$ be a morphism of injectve objects such that $P(I^0),P(I^1)\in\bigcap_\alpha\Phi_\alpha$. If we set $C=\Coker(I^0\To I^1)$ and $I^2=E(C)$, then $P(I^2)\in\bigcap_\alpha\Phi_\alpha$. Then they form a complete lattice. Similarly, specialization-closed subcategories are
partially ordered by inclusion and closed under arbitrary intersections so that they form a complete lattice. 
\end{Remark}

\begin{Definition}\label{thco}
\begin{enumerate}

\item A full subcategory $\cT$ of $\cA$ is said to be {\em thick} provided that for any exact sequence
$$
M_1\to M_2\to M_3\to M_4\to M_5
$$
of objects in $\cA$, if $M_i$ is in $\cT$ for $i=1,2,4,5$, then so is $M_3$.

\item A full subcategory $\cC$ of $\cA$ 
  is said to be \emph{cohomologically stable} if the following conditions hold.
\begin{itemize}

\item $\cC$
is closed under direct summands and any short exact sequence belongs to $\cC$ if two of
its three terms are in $\cC$.

\item for every family $(X_\alpha)$ of objects in $\cC$ the injective
  envelope of  $\coprod_\alpha X_\alpha$ belongs to $\cC$.
\item for every exact sequence $X^0\to X^1\to X^2\to\cdots$ with all
  $X^n$ in $\cC$ the kernel of $X^0\to X^1$ belongs to $\cC$.
\end{itemize}
\end{enumerate}
\end{Definition}
\begin{Remark}
\begin{enumerate}
\item A full subcategory $\cT$ of $\cA$ is thick if and only if it is closed under kernels, cokernels and extensions. We also remark that if 
 a subcategory of $\cA$ is closed under kernels or cokernels, then it is closed under direct summands. In particular, every thick subcategory of $\cA$ is closed under direct summands; (for example see \cite[Remark 2.15]{Ta2009}).
 \item For an exact category $\cA$, a full subactegory $\cC$ of $\cA$ satisfying the first condition of \cref{thco}(2) is called thick by  Krause \cite{Kr2024}. We remark that this definition is different from \cref{thco}(1).
 \end{enumerate}
\end{Remark}

The following lemma extends \cite[Lemma 3.4]{Kr2008} to Grothendieck categories. 
\begin{Lemma}\label{thi}
 Let $\Phi$ be a coherent subcategory of ${\bf L}(\cA)$. Then 
\[\suppex^{-1}(\Phi)=\{M\in\cA\mid \suppex(M)\subseteq \Phi\}\]
is a thick subcategory of $\cA$ closed under injective envelopes.
\end{Lemma}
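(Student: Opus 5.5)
By \cref{exlem}, for $M\in\cA$ with minimal injective resolution $0\to M\to I^0\to I^1\to\cdots$ we have $\suppex(M)\subseteq\Phi$ if and only if $I^n\in\Inj_\Phi\cA$ for all $n$. This holds because $\Phi$ is localizing, so it contains all the $P(I^n)$ exactly when it contains the localizing subcategory they generate. I will work entirely with this criterion, together with the exactness properties of $\suppex$ (additivity and the two-out-of-three inclusion for triangles, applied to short exact sequences viewed as triangles in ${\bf D}^+(\cA)$).

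\emph{Closure under injective envelopes.} If $M\in\suppex^{-1}(\Phi)$, then $E(M)=I^0$ lies in $\Inj_\Phi\cA$. Its minimal injective resolution is $E(M)$ itself in degree $0$, so $\suppex(E(M))\subseteq\Phi$.

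\emph{Closure under extensions.} Given $0\to M_1\to M_2\to M_3\to 0$ with $M_1,M_3\in\suppex^{-1}(\Phi)$, the corresponding triangle gives $\suppex(M_2)\subseteq\suppex(M_1)\vee\suppex(M_3)\subseteq\Phi$. By the remark after \cref{thco}, it remains to show closure under kernels and cokernels.

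\emph{Closure under kernels and cokernels.} This is the main step, and it is where coherence is used. First I will show that any $M$ admitting an exact sequence $0\to M\to J^0\to J^1\to J^2\to\cdots$ with all $J^n\in\Inj_\Phi\cA$ lies in $\suppex^{-1}(\Phi)$. The minimal injective resolution of $M$ is a direct summand of this one: the complex $J^\bullet$ splits as the minimal resolution plus a contractible complex of injectives, as in \cite[Lemma B.1]{Kr2005}. Each $I^n$ is therefore a summand of $J^n$, so $P(I^n)\in\Phi$ because $\Phi$ is closed under summands.

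Now let $f\colon M\to N$ with $M,N\in\suppex^{-1}(\Phi)$, and take $M\to E(M)$ and $N\to E(N)$, both in $\Inj_\Phi\cA$. Extend $f$ to $E(M)\to E(N)$, and replace this by the monomorphism $E(M)\to E(M)\oplus E(N)$. The target is still in $\Inj_\Phi\cA$, since $\Phi$ is closed under coproducts. Using coherence repeatedly, this morphism of injectives extends to an exact sequence $E(M)\to E(M)\oplus E(N)\to I^2\to I^3\to\cdots$ with all terms in $\Inj_\Phi\cA$. Coherence also shows that $\Coker(I^0\to I^1)$ embeds in some $I^2\in\Inj_\Phi\cA$ with cokernel again of the same form, which lets me continue inductively. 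A cleaner route is to realize $\Ker f$ and $\Coker f$ through a mapping-cone argument. Take a morphism between the minimal injective resolutions of $M$ and $N$, with mapping cone $C$, a bounded-below complex of objects of $\Inj_\Phi\cA$. The argument then reduces to the following claim.

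\emph{Claim.} If $C$ is a bounded-below complex with terms in $\Inj_\Phi\cA$, then every cohomology object $H^i(C)$ lies in $\suppex^{-1}(\Phi)$.

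I will prove the claim by induction from the lowest degree, using coherence. Coherence gives that $Z^i=\Ker(C^i\to C^{i+1})$ has a coresolution by objects of $\Inj_\Phi\cA$: its start is $C^i\to C^{i+1}$, and coherence repeatedly supplies the next term. Hence $Z^i\in\suppex^{-1}(\Phi)$ by the summand argument above. The boundaries $B^i$ are handled similarly, being images of morphisms between such objects, so they too have coresolutions by objects of $\Inj_\Phi\cA$. Finally $H^i=Z^i/B^i$ is controlled by the triangle associated with $0\to B^i\to Z^i\to H^i\to0$, which gives $\suppex(H^i)\subseteq\Phi$.

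Given the claim, $\Ker f$ and $\Coker f$ appear as cohomology of the cone, or sit in a short exact sequence with cohomology objects of the cone. Extension closure together with the 2-out-of-3 property of $\suppex$ then finishes the proof.

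The main obstacle is making this coherence-induction rigorous. The key point is that coherence must be iterated to produce full coresolutions of kernels and images of maps between objects of $\Inj_\Phi\cA$, rather than merely the first step. I would first try to prove the auxiliary statement: for any morphism $u\colon I^0\to I^1$ in $\Inj_\Phi\cA$, both $\Ker u$ and $\Im u$ lie in $\suppex^{-1}(\Phi)$. For $\Ker u$, iterating coherence on the successive maps gives the required coresolution. Then $\Im u$ is obtained from the exact sequence $0\to\Ker u\to I^0\to\Im u\to0$ by the triangle inequality.
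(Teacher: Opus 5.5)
Your proposal is correct and follows essentially the paper's argument: lift $f$ to the minimal injective resolutions, and read $\Ker f$ and $\Coker f$ off as $H^{-1}$ and $H^0$ of the mapping cone. You then iterate coherence to coresolve the cycles by objects of $\Inj_\Phi\cA$, and use exactness of $\suppex$ on $0\to\Ker f\to E^0\to B^0\to 0$ and $0\to B^0\to Z^0\to\Coker f\to 0$; the paper uses the horseshoe lemma for extensions where you use the triangle inequality, and you helpfully make explicit why a non-minimal coresolution by objects of $\Inj_\Phi\cA$ suffices. The abandoned detour through $E(M)\to E(M)\oplus E(N)$ can simply be dropped.
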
 
\begin{proof}
It is clear by definition that $\suppex^{-1}(\Phi)$ is closed under injective envelopes. By  a simple argument using horseshoe lemma, we can show that $\suppex^{-1}(\Phi)$ is closed under extensions.  Let $f:M\To N$ be a morphism of objects in $\suppex^{-1}(\Phi)$ and let $\cE:0\To E^0\To E^1\To\dots$ and $\cI:0\To I^0\To I^1\To\dots $ be minimal injective resolutions of $M$ and $N$ respectively. Then $f$ can be lifted to a map of complexes $g:\cE\to\cI$. Let
\[\text{con}(g)=0\To E^0\stackrel{\partial^{-1}}\To E^1\oplus I^0\stackrel{\partial^0}\To E^2\oplus I^1\stackrel{\partial^1}\to\dots\] be the mapping cone of $g$ with $Z^i=\Ker\partial^i$ and $B^i=\Im\partial^{i-1}$. This induces an exact triangle $$M\To N\To {\rm con}(g)\To M[-1]$$ which implies that  $\Ker f=H^{-1}({\rm con}(g))=Z^{-1}$ and $\Coker f=H^0({\rm con}(g))$. Since $\Phi$ is coherent and $P(E^0), P(E^1\oplus I^0)$ belong to $\Phi$, $\Ker f$ admits an injective resolution $$0\To\Ker f\To E^0\stackrel{\partial^{-1}}\To E^1\oplus I^0\To J^1\to J^2\dots $$ such that $P(J^i)\in\Phi$ for each $j$ and consequently $\Ker f\in\suppex^{-1}(\Phi)$. By \cite[Lemma 5.2]{Kr2024}, the map $\suppex$ is exact and so the exact sequence $0\To \Ker f\To E^0\To B^0\to 0$ implies that $B^0\in\suppex^{-1}(\Phi)$. On the other hand, $0\to Z^0\To  E^1\oplus I^0\stackrel{\partial^0}\to E^2\oplus I^1\stackrel{\partial^1}\to\dots $ is an injective resolution of $Z^0$ with each $P(E^i\oplus I^{i+1})\in\Phi$ which implies that $Z^0\in\suppex^{-1}(\Phi)$. Therefore the exact sequence $0\To B^0\To Z^0\To \Coker f\To 0$ and the exactness of $\suppex$ implies that $\Coker f\in \suppex^{-1}(\Phi)$.
\end{proof}

\medskip

\begin{Lemma}\label{thick}
Let $\cT$ be a thick subcategory of $\cA$ closed under direct sums and injective envelopes and let $M\in\cA$. Then 
\begin{enumerate}
\item If $\suppex(M)\subseteq \suppex(\cT)$, then $M\in\cT$.

\item $\suppex(\cT)$ is a coherent subcategory of $\Spec\cA$.
\end{enumerate}
\end{Lemma}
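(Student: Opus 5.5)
Both parts rest on one observation: every injective $I$ with $P(I)\in\suppex(\cT)$ already lies in $\cT$. By \cref{exlem}, for $N\in\cT$ the category $\suppex(N)$ is generated by the $P(I^n)$, where $N\to\cI$ is a minimal injective resolution. Each $I^n$ is in $\cT$: $I^0=E(N)\in\cT$ since $\cT$ is closed under injective envelopes. Then the cosyzygy $\Coker(N\to I^0)$ is in $\cT$ because $\cT$ is thick, and inductively every cosyzygy and every $I^n=E(\text{cosyzygy})$ is in $\cT$. Hence $\suppex(\cT)$ is the localizing subcategory of $\Spec\cA$ generated by the $P(I)$ with $I\in\cT\cap\Inj\cA$.

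Since $\Spec\cA$ is spectral, every object of $\suppex(\cT)$ is a direct summand of a coproduct $\coprod P(I_\alpha)$ with $I_\alpha\in\cT$. Now let $J$ be injective with $P(J)\in\suppex(\cT)$. I want to lift the splitting $P(J)\mid P(\coprod I_\alpha)$ to $\cA$. Here $\coprod I_\alpha\in\cT$ by closure under direct sums, and its injective envelope $E=E(\coprod I_\alpha)$ is in $\cT$ as well. Since $P$ turns essential monomorphisms into isomorphisms, $P(E)\cong\coprod P(I_\alpha)$. Morphisms in $\Spec\cA$ between injectives are morphisms in $\cA$ modulo those with essential kernel, and $P$ is fully faithful on injectives up to this identification (Gabriel–Oberst). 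So $J$ is a direct summand of $E$ in $\cA$, and $J\in\cT$ because thick subcategories are closed under summands. \emph{This lifting step is the main obstacle.} I expect to cite the standard fact that $P$ restricted to $\Inj\cA$ reflects split monomorphisms, i.e. idempotents of $P(E)$ lift to $E$ because $\operatorname{End}_{\Spec\cA}(P(E))=\operatorname{End}(E)/J(\operatorname{End}(E))$ with idempotents lifting modulo the Jacobson radical.

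For (1), take $M$ with $\suppex(M)\subseteq\suppex(\cT)$ and a minimal injective resolution $0\to M\to I^0\to I^1$. By \cref{exlem}, $P(I^0),P(I^1)\in\suppex(\cT)$, so $I^0,I^1\in\cT$ by the observation. Then $M=\Ker(I^0\to I^1)\in\cT$, since $\cT$ is closed under kernels.

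For (2), note that $\suppex(\cT)$ is localizing by definition. Take $I^0\to I^1$ with $P(I^0),P(I^1)\in\suppex(\cT)$. Both $I^0,I^1$ lie in $\cT$, so $C=\Coker(I^0\to I^1)\in\cT$ and $I^2:=E(C)\in\cT$. Then $I^0\to I^1\to I^2$ is exact and $P(I^2)\in\suppex(I^2)\subseteq\suppex(\cT)$: indeed $I^2$ is its own minimal injective resolution, so $\suppex(I^2)$ is generated by $P(I^2)$ by \cref{exlem}. This gives coherence.
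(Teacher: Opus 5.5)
Your proposal is correct and takes essentially the paper's route. Both arguments show that every injective $I$ with $P(I)\in\suppex(\cT)$ lies in $\cT$: $P(I)$ is exhibited as a direct summand of $P$ of an injective obtained from $\cT$ by coproducts and injective envelopes, that splitting is lifted back to $\cA$, and one concludes using closure of $\cT$ under summands; (1) then follows from closure under kernels and (2) from closure under cokernels and injective envelopes. Where the paper cites \cite[Proposition 2.14 and Lemma 2.1]{Kr2024} for the decomposition and the lifting, you prove the lifting directly via the radical of $\operatorname{End}(E)$.
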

\begin{proof}
 (1) Let $0\to M\To E^0\To E^1\To \dots$ be a minimal injective resolution of $M$. The assumption implies that $P(E^i)\in\suppex(\cT)$ for each $i$.  It follows from \cite[Propositon 2.14]{Kr2024} that $P(E^i)=\coprod_\alpha P(E_\alpha)$, where $P(E_\alpha)\in\suppex(X_\alpha)$ for some $X_\alpha\in\cT$. Let $0\To X_\alpha\To E_\alpha^0\To E_\alpha^1\To\dots$  be a minimal injective resolution of $X_\alpha$. Since $\cT$ is thick and  closed under injective envelopes, $E^i_\alpha \in\cT$ for each $i$; and hence \cref{exlem} and \cite[Lemma 2.1]{Kr2024} imply $E_\alpha\in\cT$. Since $\cT$ is closed under direct sums, we conclude that $\coprod_\alpha E_\alpha\in\cT$. Therefore $E^i=E(\coprod E_\alpha)\in\cT$ for each $i$ as $\cT$ is closed under injective envelopes. Since $\cT$ is thick, we deduce that $M\in\cT$.

(2)  Let $\Phi=\suppex(\cT)$ and $I^0\To I^1$  be a morphism in $\Inj_{\Phi}\cA$. For $i=0,1$, since $\suppex(I^i)=\langle P(I^i)\rangle\subseteq\Phi$,  part (1) implies that $I^i\in\cT$. Thus $C=\Coker(I^0\To I^1)\in\cT$ as $\cT$ is thick and $I^2=E(C)\in\cT$ because $\cT$ is closed under injectve envelopes. Therefore $I^0\To I^1$ can be fitted into the exact sequence  $I^0\To I^1\To I^2$ in $\Inj_\Phi\cA$.
\end{proof}
\medskip  

For every $\Phi\in {\bf L}(\Spec\cA)$, set 
\[\cA_\Phi=\suppex^{-1}(\Phi)=\{X\in\cA\mid\suppex(X)\subseteq\Phi\}.\]
The following theorem extends \cite[Proposition 2.17]{Ta2009}.

\begin{Proposition}
Every thick subcategory $\cT$ of $\cA$ closed under direct sums and injective envelopes is cohomologically stable. 
\end{Proposition}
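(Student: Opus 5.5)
We check the three conditions of cohomological stability in \cref{thco}(2) one at a time, using only that $\cT$ is thick, closed under direct sums and closed under injective envelopes.

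\emph{First condition.} Closure under direct summands holds because $\cT$ is thick, by the Remark following \cref{thco}. Now let $0\to X\to Y\to Z\to 0$ be exact. If $X,Z\in\cT$, then $Y\in\cT$ because $\cT$ is closed under extensions. If $X,Y\in\cT$, then $Z=\Coker(X\to Y)\in\cT$ because $\cT$ is closed under cokernels. If $Y,Z\in\cT$, then $X=\Ker(Y\to Z)\in\cT$ because $\cT$ is closed under kernels.

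\emph{Second condition.} Take a family $(X_\alpha)$ in $\cT$. Closure under direct sums gives $\coprod_\alpha X_\alpha\in\cT$. Closure under injective envelopes then gives $E(\coprod_\alpha X_\alpha)\in\cT$.

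\emph{Third condition.} Let $X^0\to X^1\to X^2\to\cdots$ be an exact sequence with every $X^n\in\cT$. Since $\cT$ is closed under kernels, $K=\Ker(X^0\to X^1)$ lies in $\cT$ at once. The exactness of the rest of the sequence is not needed.

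The argument is elementary and has no real obstacle. The only point needing care is to use \cref{thco}(1) in its equivalent form "closed under kernels, cokernels and extensions" from the Remark, and not the five-term form. One could alternatively pass through \cref{thick} and \cref{thi}, writing $\cT=\cA_\Phi$ with $\Phi=\suppex(\cT)$ coherent. This is unnecessary here, though it would show that each $\cA_\Phi$ with $\Phi$ coherent is cohomologically stable.
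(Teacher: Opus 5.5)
Your proof is correct, but it takes a different and more elementary route than the paper.

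\emph{The paper's route.} The paper does not check the axioms of \cref{thco}(2) directly. It proves the reconstruction identity $\cT=\cA_{\suppex(\cT)}$, using minimal injective resolutions and \cref{thick}(1). That lemma in turn rests on \cref{exlem} and on Krause's results on decompositions in $\Spec\cA$. The paper then invokes \cite[Lemma 5.4]{Kr2024}, which says that every $\cA_\Phi$ is cohomologically stable.

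\emph{Your route.} You observe that each axiom is immediate once thickness is read as closure under kernels, cokernels and extensions. The infinite-sequence condition matters for subcategories that are only closed under two-out-of-three on short exact sequences. Here it reduces to closure under kernels.

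\emph{Comparison.} Your argument shows that the statement is essentially definitional and needs no spectral-category machinery. The paper's route gives, as a by-product, the identity $\cT=\suppex^{-1}(\suppex(\cT))$, which is what the later classification results actually use. That identity can also be recovered from your argument. By \cite[Theorem 5.5]{Kr2024}, every cohomologically stable $\cC$ has the form $\cA_\Phi$, and then $\cC\subseteq\cA_{\suppex(\cC)}\subseteq\cA_\Phi=\cC$.

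\emph{Your closing aside needs a correction.} Cohomological stability of $\cA_\Phi$ already holds for every $\Phi\in\bL(\Spec\cA)$ by \cite[Lemma 5.4]{Kr2024}, so coherence plays no role there. Deducing it from this Proposition would also require $\cA_\Phi$ to be closed under direct sums. The paper proves that only in the locally noetherian setting (\cref{dirsum}), where coproducts of injectives are injective.
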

\begin{proof}
It suffices to show that $\cT=\cA_{\suppex(\cT)}$ and so \cite[Lemma 5.4]{Kr2024} implies that $\cT$ is cohomologically stable. Let $M$ be an object in $\cA$  with a minimal injective resolution $0\To M\To E^0\To E^1\To E^2\to\dots $.  If $M\in\cT$, then the assumption on $T$ implies that  $E^i(M)\in\cT$ for each $i$. Hence $P(E^i)\in\suppex(\cT)$ for each $i$ so that $M\in\cA_{\suppex(\cT)}$. For the equality, if $M\in \cA_{\suppex(\cT)}$, then $\suppex(M)\subseteq \suppex(\cT)$. Thus \cref{thick} implies that $M\in\cT$.
 \end{proof}

 The following result extends a theorem due to Krause \cite[Theorem 5.2]{Kr2008} where for a commutative noetherian ring $A$, he provided a classification of coherent subsets of $\Spec A.$

\medskip
\begin{Theorem}\label{krau}
For $\Phi\in{\bf L}(\Spec\cA)$, the following conditions are equivalent.
\begin{enumerate}
\item $\Phi$ is coherent.
\item For every complex $X$ in ${\bf D}^+(\cA)$ we have:
\[\suppex(X)\subseteq \Phi \hspace{0.2cm}\Longleftrightarrow\hspace{0.2cm} \suppex(H^i(X))\subseteq \Phi\text{ for all }i.\]
\item For every complex $X$ in ${\bf D}^+(\cA)$ we have:
\[\suppex(X)\subseteq \Phi \hspace{0.2cm}\Longrightarrow\hspace{0.2cm} \suppex(H^i(X))\subseteq \Phi\text{ for all }i.\]
\end{enumerate}
 \end{Theorem}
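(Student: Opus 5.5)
I would prove the cycle $(1)\Rightarrow(2)\Rightarrow(3)\Rightarrow(1)$. The implication $(2)\Rightarrow(3)$ is trivial. For $(3)\Rightarrow(1)$, take a morphism $f\colon I^0\To I^1$ in $\Inj_\Phi\cA$ and regard it as the two-term complex $X=(0\To I^0\stackrel{f}\To I^1\To 0)$ in ${\bf D}^+(\cA)$, with $I^0$ in degree $0$. This complex consists of injectives, so it computes ${\bf R}P(X)$ directly. Consequently every $H^i({\bf R}P(X))$ is a subquotient, indeed a direct summand since $\Spec\cA$ is spectral, of $P(I^0)$ or $P(I^1)$. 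Hence $\suppex(X)\subseteq\Phi$.

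By (3), $C=H^1(X)=\Coker f$ then satisfies $\suppex(C)\subseteq\Phi$. By \cref{exlem}, the injective envelope $I^2=E(C)$, which is the zeroth term of a minimal injective resolution of $C$, has $P(I^2)\in\Phi$. The sequence $I^0\To I^1\To I^2$ is exact, so $\Phi$ is coherent.

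For $(1)\Rightarrow(2)$, I would use \cref{thi}: the class $\cA_\Phi=\suppex^{-1}(\Phi)$ is thick and closed under injective envelopes.

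For the direction ``$\Leftarrow$'', suppose $\suppex(H^i(X))\subseteq\Phi$ for all $i$, with $X\in{\bf D}^+(\cA)$ and cohomology starting in degree $t$. I would argue by the truncation triangles $\tau_{\le n}X\To X\To\tau_{>n}X$ together with the exactness of $\suppex$ (\cite[Lemma 5.2]{Kr2024}). The issue is that $X$ may be unbounded above. To handle this, I would instead build a resolution directly. Using that $\cA_\Phi$ is thick and closed under envelopes, I would construct inductively an injective resolution $X\To\cI$ with all $P(I^n)\in\Phi$, in the style of the horseshoe lemma or a Cartan–Eilenberg resolution: each $I^n$ is a sum of injective envelopes of objects built from $H^j(X)$ and the previous cycles and boundaries, all lying in $\cA_\Phi$. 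The minimal resolution is a direct summand of any injective resolution (\cite[Proposition B.2]{Kr2005}), so its terms satisfy $P(I^n)\in\Phi$ because $\Phi$ is closed under summands. Then \cref{exlem} gives $\suppex(X)\subseteq\Phi$.

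For the direction ``$\Rightarrow$'', let $X\To\cI$ be a minimal injective resolution with all $P(I^n)\in\Phi$. The argument is then the one from \cref{thi}. The cycle object $Z^t=\Ker(I^t\To I^{t+1})$ has the resolution $0\To Z^t\To I^t\To I^{t+1}\To\cdots$, so $Z^t\in\cA_\Phi$. Coherence applied to $I^{n-1}\To I^n$ shows that the cokernel $I^n/B^n$ embeds in some $I'$ with $P(I')\in\Phi$. Iterating coherence gives an injective resolution of $B^n$ whose terms lie in $\Inj_\Phi\cA$: namely $I^{n-1}$ followed by successive envelopes of cokernels, preceded by the resolution of the kernel. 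Likewise $Z^n$ is resolved by $I^n\To I^{n+1}\To\cdots$. So $B^n,Z^n\in\cA_\Phi$, and thickness applied to $0\To B^n\To Z^n\To H^n(X)\To0$ gives $H^n(X)\in\cA_\Phi$.

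The main obstacle I expect is the ``$\Leftarrow$'' direction for complexes that are unbounded above. There I need a careful inductive construction showing that all terms of the minimal injective resolution lie in $\Inj_\Phi\cA$, using only thickness and closure under envelopes of $\cA_\Phi$. I would first try a degreewise induction: at each step, take the envelope of the relevant cokernel, which lies in $\cA_\Phi$ by an extension argument with $H^n(X)$.
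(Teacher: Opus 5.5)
Your $(3)\Rightarrow(1)$ is the paper's argument. For $(1)\Rightarrow(2)$ you use the same engine (\cref{thi} plus the minimal injective resolution), but two steps there fail as written.

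In the direction ``$\Rightarrow$'' you assert that $0\To Z^n\To I^n\To I^{n+1}\To\cdots$ is an injective resolution of $Z^n$, and likewise for $Z^t$. That sequence is exact only when $H^m(X)=0$ for all $m>n$. The corresponding step in the proof of \cref{thi} is valid only because the cone of a morphism between objects has cohomology in just two degrees, so it does not transplant to an arbitrary $X$. Also, the resolution of $B^n$ produced by iterated coherence is $0\To B^n\To I^n\To I'\To I''\To\cdots$; it does not start with $I^{n-1}$. The fix uses what you already have. Since $\suppex(I)=\langle P(I)\rangle$ for injective $I$, every $I^n$ lies in $\cA_\Phi$. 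Thickness (\cref{thi}) then puts $Z^n=\Ker(I^n\To I^{n+1})$, $B^n$ and $H^n(X)=\Coker(B^n\To Z^n)$ in $\cA_\Phi$; this is what the paper does. Alternatively, apply to $I^n\To I^{n+1}$ the same iterated-coherence trick you used for $B^n$.

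The direction ``$\Leftarrow$'' is not actually proved. You flag it as the main obstacle and propose building some injective resolution by a horseshoe or Cartan--Eilenberg construction, then passing to the minimal summand. A Cartan--Eilenberg resolution is assembled from resolutions of the cycles and boundaries of $X$ itself, which need not lie in $\cA_\Phi$, so that route does not give terms in $\Inj_\Phi\cA$.

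The missing observation is that no construction is needed and unboundedness is irrelevant. Work directly with the minimal resolution $X\To\cI$, where $I^n=E(Z^n)$ and $Z^t=H^t(X)$, with exact sequences $0\To Z^n\To I^n\To B^{n+1}\To 0$ and $0\To B^{n+1}\To Z^{n+1}\To H^{n+1}(X)\To 0$. Induction on $n$ then runs as follows: $Z^n\in\cA_\Phi$ gives $I^n=E(Z^n)\in\cA_\Phi$ and $P(I^n)\in\Phi$; then $B^{n+1}\in\cA_\Phi$ as a cokernel; then $Z^{n+1}\in\cA_\Phi$ as an extension. Each degree is reached after finitely many steps, and \cref{exlem} yields $\suppex(X)\subseteq\Phi$. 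Your final sentence gestures toward this induction, but as written the half of $(2)$ you regard as hardest is left open.
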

 \begin{proof}
 Let $X\in {\bf D}^+(\cA)$. Then $X$ has an injective resolution $\cI=0\To I^t\To I^{t+1}\to \dots $ and using \cite[Proposition B2]{Kr2005} we may assume that $\cI$ is a minimal injective resolution of $M$ with $Z^i=\Ker(I^i\To I^{i+1})$ essential in $I^i$ and $B^i=\Im(I^{i-1}\To I^i)$ for each $i$.
 
 (1)$\Rightarrow$ (2): The assumption on $X$ and \cref{exlem} imply that $P(I^i)\in\Phi$ for all $i$ by so that $Z^i\in\suppex^{-1}(\Phi)$ for all $i$ as  $\suppex^{-1}(\Phi)$ is thick by \cref{thi}. Moreover, the exactness of $\suppex$ and the exact sequence $0\To Z^i\To I^i\To B^{i+1}\To 0$ implies that $B^i\in\suppex^{-1}(\Phi)$ for each $i$. Again the thickness of $\suppex^{-1}(\Phi)$ implies that $H^i(X)\in \suppex^{-1}(\Phi)$ for all $i$. Conversely, if $H^i(X)\in \suppex^{-1}(\Phi)$  for all $i$, then  $Z^t=H^t(X)\in\suppex^{-1}(\Phi)$. Since $Z^t$ is essential subobject of $I^t$, we have $P(Z^t)=P(I^t)$ so that $I^t\in \suppex^{-1}(\Phi)$. Since $\suppex^{-1}(\Phi)$ is thick, the exact sequences  $0\To Z^t\to I^t\To B^{t+1}\To 0$ and $0\To B^{t+1}\To Z^{t+1}\To H^{t+1}(X)\To 0$ conclude that $Z^{t+1}\in\suppex^{-1}(\Phi)$. Now, proceeding by induction $Z^i\in \suppex^{-1}(\Phi)$ for all $i$ so that $P(I^i)\in\Phi$ for all $i$.
 
 (2)$\Rightarrow$ (3) is clear.
 
 (3)$\Rightarrow$ (1): Let $I^0\To I^1$ be a morphism of injective objects with $I^0, I^1\in\Inj_{\Phi}\cA$. Then for  complex $0\To I^0\To I^1\To $, we have $\suppex(\cI)\subset \Phi$. Hence the assumption implies that for $H^1(\cI)=\Coker (I^0\To I^1)=C$, we have $\suppex(C)\subseteq \Phi$. Then if we consider $I^2=E(C)$, we have an exact sequence $I^0\To I^1\To I^2$ such that $I^2\in\Inj_{\Phi}\cA$. 
 \end{proof}
 
 \medskip
 
 \begin{Definition}
\begin{enumerate}
\item A subcategory $\cX$ of ${\bf D}^+(\cA)$ is {triangulated} provided that for every exact triangle $X\To T\To Z\To X[1]$ if two of $X$,$Y$ and $Z$ are in $\cX$, then so is the third.

\item A triangulated subcategory $\cL$ of ${\bf D}^+(\cA)$ is {\it localizing} if it is closed under arbitrary direct sums.  


\item A subcategory $\cX$ of ${\bf D}^+(\cA)$ is {\it closed under homology} if $H^i(X)\in\cX$ for all $X\in\cX$ and all $i\in\mathbb{Z}$.
\end{enumerate}
\end{Definition}

\medskip
\begin{Theorem}\label{loc}
 The assignment $\cL\longmapsto \suppex(\cL\cap \cA)$ induces an inclusion preserving between   \[\{\text{localizing subcategories }\cL\text{ of }{\bf D}^{+}(\cA) \text{ such that }\cL \text{ is closed under homology and }\]\[\cL\cap\cA \text{ is closed under injective envelopes} \}\]\[\longiso\{\text{ coherent subcategories  of }\Spec\cA\}.\]
The inverse map is given by $\Phi\longmapsto \suppex^{-1}(\Phi):=\{X\in{\bf D}^+(\cA)\mid \suppex(X)\subseteq \Phi\}$.
\end{Theorem}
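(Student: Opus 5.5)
We show that the two assignments are well defined and mutually inverse; inclusion preservation is then immediate from the definitions.

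\emph{Well-definedness of $\Phi\mapsto\suppex^{-1}(\Phi)$.} Let $\Phi$ be coherent and set $\cL=\suppex^{-1}(\Phi)\subseteq{\bf D}^+(\cA)$.
\begin{itemize}
\item $\cL$ is triangulated because $\suppex$ is exact, and it is closed under shifts because a shift of a minimal injective resolution is again one, so \cref{exlem} applies.
\item $\cL$ is closed under direct sums, since $\suppex$ of a coproduct is the join of the supports. Here we must check that coproducts of bounded-below complexes existing in ${\bf D}^+(\cA)$ still admit minimal injective resolutions with terms whose $P$ lies in $\Phi$. This uses that $\Phi$ is closed under coproducts, and that an injective envelope of a coproduct has $P$ equal to $P$ of the coproduct.
\item $\cL$ is closed under homology by \cref{krau}(1)$\Rightarrow$(2).
\item $\cL\cap\cA=\suppex^{-1}(\Phi)\cap\cA$ is closed under injective envelopes by \cref{thi}.
\end{itemize}

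\emph{Well-definedness of $\cL\mapsto\suppex(\cL\cap\cA)$.} Let $\cL$ be in the left-hand class and put $\cT=\cL\cap\cA$.
\begin{itemize}
\item $\cT$ is thick. Given $f\colon M\to N$ in $\cT$, the cone of $f$ lies in $\cL$, and since $\cL$ is closed under homology, $\Ker f$ and $\Coker f$ lie in $\cL\cap\cA$. For an extension $0\to M\to X\to N\to 0$ in $\cA$, the associated triangle shows $X\in\cL$.
\item $\cT$ is closed under direct sums, because $\cL$ is.
\item $\cT$ is closed under injective envelopes by hypothesis.
\end{itemize}
Hence \cref{thick}(2) shows that $\suppex(\cT)$ is coherent.

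\emph{The composite on coherent subcategories.} We must show $\suppex(\suppex^{-1}(\Phi)\cap\cA)=\Phi$. The inclusion $\subseteq$ is clear. For $\supseteq$, note that $\Phi$ is a localizing subcategory of the spectral category, so every object of $\Phi$ is a direct summand of some $P(I)$ with $I$ injective and $P(I)\in\Phi$. For such $I$ we have $\suppex(I)=\langle P(I)\rangle\subseteq\Phi$, so $I\in\suppex^{-1}(\Phi)\cap\cA$ and $P(I)\in\suppex(I)$. Thus $\Phi$ is contained in the join.

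\emph{The composite on localizing subcategories; the main obstacle.} We must show $\cL=\suppex^{-1}(\suppex(\cT))$ with $\cT=\cL\cap\cA$.

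For $\subseteq$, take $X\in\cL$. All $H^i(X)$ lie in $\cT$, so $\suppex(H^i(X))\subseteq\suppex(\cT)$. Since $\suppex(\cT)$ is coherent, \cref{krau}(2) gives $\suppex(X)\subseteq\suppex(\cT)$.

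For $\supseteq$, take $X$ with $\suppex(X)\subseteq\suppex(\cT)$. By \cref{krau}, each $H^i(X)$ has support in $\suppex(\cT)$, and by \cref{thick}(1) each $H^i(X)$ lies in $\cT\subseteq\cL$. It remains to rebuild $X$ from its cohomology inside $\cL$.

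Write $X$ as a homotopy colimit of its truncations $\tau^{\le n}X$. Each $\tau^{\le n}X$ lies in $\cL$ by induction on $n$, using the triangles $\tau^{\le n-1}X\to\tau^{\le n}X\to H^n(X)[-n]\to$. The homotopy colimit is the cone of a map between coproducts of these truncations, so it lies in $\cL$ because $\cL$ is localizing. The delicate point is that the coproducts involved, and the telescope itself, must be formed in ${\bf D}^+(\cA)$. This holds because all truncations are bounded below uniformly, starting at the degree $t$ where $X$ begins.

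Alternatively, one can argue through the minimal injective resolution: each term $I^i$ lies in $\cT$ by \cref{thick}(1), and the bounded-below complex $\cI$ is the homotopy colimit of its brutal truncations, each built from finitely many of the $I^i$. This version needs the same closure of ${\bf D}^+(\cA)$ under such telescopes, which is where we expect most of the care to go.
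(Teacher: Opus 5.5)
Your outline matches the paper's proof. Thickness of $\cL\cap\cA$ plus \cref{thick}(2) gives coherence, \cref{krau} gives $\cL\subseteq\suppex^{-1}(\suppex(\cL\cap\cA))$, \cref{thick}(1) gives the reverse inclusion, and a direct check gives $\Phi=\suppex(\suppex^{-1}(\Phi)\cap\cA)$. Your version of that last check is cleaner: the paper uses \cref{thick} for the inclusion that is trivial. Your telescope argument is correct and fills an omission in the paper. The paper says the reverse inclusion ``follows from \cref{thick}'', but that lemma only puts the $H^i(X)$ (or the $I^i$) in $\cL\cap\cA$. Rebuilding a complex that is unbounded above inside $\cL$ really does need the uniformly bounded-below countable coproduct, as you say.

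\textbf{The gap.} You claim that $\suppex^{-1}(\Phi)$ is closed under direct sums, and this fails in general. Exactness of $\suppex$ only covers finite sums. Your sketch controls only the degree-$0$ term of the minimal injective resolution of $\coprod_\alpha X_\alpha$. Once coproducts of injectives fail to be injective, the higher terms are no longer built from the $I^n_\alpha$: if every $X_\alpha$ is injective, the degree-$1$ term is $E\bigl(E(\coprod_\alpha X_\alpha)/\coprod_\alpha X_\alpha\bigr)$. The paper proves closure under coproducts only for locally noetherian $\cA$ (\cref{dirsum}), and its proof of this theorem never checks it.

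\textbf{A counterexample.} Take $R=\prod_{n\in\mathbb{N}}k$ with $k$ a field, $\cA=\Mod R$, $e_n$ the coordinate idempotents, $N=\bigoplus_n e_nR$, and $\Phi=\bigvee_n\langle P(e_nR)\rangle$. Each $e_nR$ is a summand of the self-injective ring $R$, hence injective, so $e_nR\in\suppex^{-1}(\Phi)$. The subcategory $P^{-1}(\Phi)$ is the essentially closed subcategory generated by the $e_nR$; it consists of the modules with no nonzero element killed by all $e_n$. Since $N$ is essential in $R$, the minimal injective resolution of $N$ begins $0\to N\to R\to E(R/N)$. But $R/N\neq0$ is killed by every $e_n$. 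Hence $P(E(R/N))\in\suppex(N)\setminus\Phi$, and $N=\coprod_ne_nR\notin\suppex^{-1}(\Phi)$.

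Yet $\Phi$ is coherent. Let $f\colon I^0\to I^1$ be a morphism in $\Inj_\Phi\cA$, and let $y\in I^1$ satisfy $e_ny\in\Im f$ for all $n$. Write $e_ny=f(x_n)$ with $x_n\in e_nI^0$. Since $N$ is projective and $I^0$ injective, $e_n\mapsto x_n$ extends to a map $R\to I^0$. This gives $x$ with $e_n(y-f(x))=0$ for all $n$, so $y=f(x)$ because $I^1\in P^{-1}(\Phi)$. Thus $\Coker f\in P^{-1}(\Phi)$, and $I^2=E(\Coker f)$ works.

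So $\suppex^{-1}(\Phi)$ is not localizing. Moreover, by \cref{thick}(1) no $\cL$ in the left-hand class is sent to $\Phi$ either, since it would contain every $e_nR$ and hence $N$. The step therefore cannot be repaired for arbitrary Grothendieck categories. An extra hypothesis is needed, such as $\cA$ locally noetherian. There a coproduct of uniformly bounded-below minimal injective resolutions is again one, and your argument goes through.
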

\begin{proof}
It suffices to show that $\cL\cap \cA$ is a thick subcategory of $\cA$ closed under direct sums and so \cref{thick} implies that $\suppex(\cL\cap \cA)$ is coherent in  ${\bf L}(\Spec\cA)$ (we notice that $\cL\cap\cA$ is closed under injective envelopes by the assumption). Since $\cL$ is localizing, $\cL\cap\cA$ is closed under direct sums. Every short exact sequence $0\To M\To N\To K\To 0$ in $\cL\cap \cA$ can be fitted into an exact triangle $M\To N\To K\To M[1]$ in $\cL$. This implies that $\cL\cap\cA$ is closed under extensions. Each morphism $M\stackrel{f}\To N$ in $\cL\cap\cA$ can be fitted into $M\To N\To {\rm con}(f)\To M[1]$. Since $\cL$ is triangulated, con$(f)\in\cL$ so that $H^{-1}(f)=\Ker f$ and $H^0(f)=\Coker f$ belong to $\cL\cap \cA$ as $\cL$ is closed under homology. On the other hand, it follows from \cref{krau} that $\suppex^{-1}(\Phi)$ is closed under homologoy. Moreover, for each $M\in\suppex^{-1}(\Phi)\cap\cA$, we have $\suppex(E(M))\subseteq\suppex (M)$ so that $E(M)\in\suppex^{-1}(\Phi)$. Hence $\suppex^{-1}(-)$ is well defined. If $X\in\cL$, then $H^i(X)\in\cL\cap\cA$ for all $i$ by the definition. Then $\suppex(H^i(X))\subseteq\suppex(\cL\cap\cA)$ for all $i$ and so \cref{krau} implies that $X\in\suppex^{-1}(\suppex(\cL\cap\cA))$. Therefore, $\cL\subseteq \suppex^{-1}(\suppex(\cL\cap\cA)).$ The other side follows from \cref{thick} and consequently 
$$\cL=\suppex^{-1}(\suppex(\cL\cap\cA)).$$
It is clear that $\Phi\subseteq \suppex(\suppex^{-1}(\Phi)\cap\cA)$. Let $P(I)\in\suppex(\suppex^{-1}(\Phi)\cap\cA)$ for some injective object $I$ in $\cA$. Then $\suppex(I)\subseteq \suppex(\suppex^{-1}(\Phi)\cap\cA)$. It follows from \cref{thi} that $\suppex^{-1}(\Phi)\cap\cA$  is thick and hence  \cref{thick} forces that $P(I)\in\Phi$; consequently 
$$\Phi=\suppex(\suppex^{-1}(\Phi)\cap\cA).$$
\end{proof}

For every  subcategory $\cL$ of $\cA$, set 
\[
\cL^{\bot}=\{M\in\cA\mid \Hom(L,M)=0 \text{ for all } \text{ all } L\in\cL\}
;\]\[^\bot\cL=\{M\in\cA\mid \Hom(M,L)=0 \text{ for all } \text{ all } L\in\cL\}.\]

\begin{Definition}
Following \cite[Chap, VI, p. 138]{St}, a {\it torsion theory} of $\cA$ is a pair $(\cT,\cF)$ of classes of objects in $\cA$ such that 
 $\Hom(T,F)=0$ for all $T\in\cT$ and  $F\in\cF$; moreover $\cT^\bot=\cF$ and
 $^\bot \cF=\cT$.
The {\it right prependicular} of a subcategory $\cL$ of $\cA$ is defined as 
\[
\cL^{\bot_{\leq 1}}=\{M\in\cA\mid \Ext^i(L,M)=0 \text{ for all } i\leq 1 \text{ all } L\in\cL.\}
\]The right prependicular subcategories has been extensively studied by Geigle and Lenzing \cite{GL}. If $\cL$ is a localizing subcategory of $\cA$, every object in $\cL^{\bot_{\leq 1}}$ is called $\cL$-{\it closed}. We also set 
\[
\cL^{\bot_{\geq 0}}=\{M\in\cA\mid \RHom(\cL,M)=0\}=\{M\in\cA\mid \Ext^i(L,M)=0 \text{ for all } i\geq 0 \text{ and } L\in\cL\}.
\]
\end{Definition}
\medskip
\begin{Lemma}\label{s1}
Let $\cL$ be a localizing subcategory of $\cA$. Then $\cL^{\bot_{\leq 1}}$ is closed under kernels.
\end{Lemma}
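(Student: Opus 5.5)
Since $\Ext^i$ vanishes for $i<0$, $\cL^{\bot_{\leq 1}}$ is the class of objects $M$ with $\Hom(L,M)=0$ and $\Ext^1(L,M)=0$ for all $L\in\cL$. So for a morphism $f\colon M\To N$ with $M,N\in\cL^{\bot_{\leq 1}}$, I will show that $K=\Ker f$ satisfies both vanishing conditions. I plan to split $f$ as $M\twoheadrightarrow I\hookrightarrow N$ with $I=\Im f$, which gives short exact sequences
\[0\To K\To M\To I\To 0,\qquad 0\To I\To N\To C\To 0,\]
where $C=\Coker f$. I then apply $\Hom(L,-)$ to each sequence, for a fixed $L\in\cL$, and read off the long exact sequences in $\Ext$ (Yoneda Ext, so no enough-projectives hypothesis is needed).

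First, $\Hom(L,K)$ embeds in $\Hom(L,M)=0$, so $\Hom(L,K)=0$. In the same way $\Hom(L,I)\subseteq\Hom(L,N)=0$, so $\Hom(L,I)=0$. From the first sequence we get the exact segment
\[\Hom(L,I)\To\Ext^1(L,K)\To\Ext^1(L,M).\]
Its outer terms vanish, since $\Hom(L,I)=0$ and $M\in\cL^{\bot_{\leq 1}}$. Hence $\Ext^1(L,K)=0$, and so $K\in\cL^{\bot_{\leq 1}}$.

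The argument uses only that $\Hom(L,-)$ is left exact and that the Ext long exact sequence exists, so I expect no real obstacle. The one point to check is that the vanishing of $\Hom(L,I)$ comes from $I$ being a subobject of $N$, and not from any property of $C$. Notably, neither the $\Ext^1$ condition on $N$ nor the assumption that $\cL$ is localizing is needed. Indeed, the same proof shows that $\cS^{\bot_{\leq 1}}$ is closed under kernels for an arbitrary class $\cS$ of objects.
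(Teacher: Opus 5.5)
Your proof is correct and follows the paper's route: the paper also factors $f$ through $I=\Im f$, applies $\Hom(L,-)$ to $0\To K\To M\To I\To 0$ and $0\To I\To N\To C\To 0$, and concludes $K\in\cL^{\bot_{\leq 1}}$, with the diagram chase you wrote out left implicit. Your side remark is also right: only $\Hom(L,N)=0$ and $\Hom(L,M)=\Ext^1(L,M)=0$ are used, so the localizing hypothesis on $\cL$ plays no role.
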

\begin{proof}
 Let $M\stackrel{f}\To N$ be a morphism of objects in $\cL^{\bot_{\leq 1}}$ with $K=\Ker f$, $I=\Im\ f$ and $C=\Coker f$. Applying $\Hom(\cL,-)$ to the exact sequences
\[ 0\To K\to M\To I\To 0\qquad\text{and}\qquad 0\To I\To N\To C\To 0\] we conclude that $K\in\cL^{\bot_{\leq 1}}$.
\end{proof}

\medskip
\begin{Theorem}\label{charc}
Let $\Phi$ be a localizing  subcategory of $\Spec\cA$ such that $P^{-1}(\Phi)$ is closed under products and let $\cL=^\bot(P^{-1}(\Phi))$. Then the following conditions are equivalent.
\begin{enumerate}
\item  $\Phi$ is a coherent subcategory of ${\bf L}(\Spec\cA)$.

 \item $\cL^{\bot_{\leq 1}}=\cL^{\bot_{\geq 0}}$.
 \item $\cL^{\bot_{\geq 0}}$ is a thick subcategory of $\cA$.
 \end{enumerate}
 \end{Theorem}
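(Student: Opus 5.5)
The plan is to turn both perpendicular categories into statements about minimal injective resolutions. The key input is the identification
\[
\cL^{\bot}=P^{-1}(\Phi),\qquad\text{and in particular}\qquad \Inj\cA\cap\cL^{\bot}=\Inj_\Phi\cA .
\]

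\emph{Step 1: the torsion theory.} The class $\cF:=P^{-1}(\Phi)$ is closed under subobjects, essential extensions and coproducts, because $P$ is left exact, turns essential monomorphisms into isomorphisms, and $\Phi$ is localizing. By hypothesis it is also closed under products. It is closed under extensions too: for $0\to A\to B\to C\to 0$ we have $P(B)\cong P(A)\oplus P(C')$ with $C'\subseteq C$ in $\Spec\cA$, using left exactness and the splitting in $\Spec\cA$.

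Hence $\cF$ is a hereditary torsion-free class closed under injective envelopes. By \cite[Chap. VI]{St}, $(\cL,\cF)$ with $\cL={}^\bot\cF$ is a hereditary torsion theory, so $\cL$ is localizing and $\cL^\bot=\cF$. I expect this step to be the main obstacle: one has to check carefully that $\cF$ really is a torsion-free class and that $\cL^\bot$ recovers $\cF$ rather than something larger. I would first try the standard characterization that a class closed under subobjects, products, extensions and injective envelopes is the torsion-free class of a hereditary torsion theory.

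\emph{Step 2: translating the perpendiculars.} Let $0\to M\to E^0\to E^1\to\cdots$ be a minimal injective resolution of $M$. Since $\cL$ is closed under subobjects and $Z^i$ is essential in $E^i$, we get $\Hom(\cL,E^i)=0$ iff $\Hom(\cL,Z^i)=0$. From this one derives the following two descriptions.

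The category $\cL^{\bot_{\leq1}}$ consists of the $M$ with $E^0,E^1\in\Inj_\Phi\cA$. Indeed, $\Ext^1(L,M)=0$ for a torsion-free $M$ is equivalent to $\Hom(L,E^0/M)=0$, and $E^0/M$ embeds essentially into $E^1$.

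The category $\cL^{\bot_{\geq0}}$ consists of the $M$ with all $E^i\in\Inj_\Phi\cA$, so that $\cL^{\bot_{\geq 0}}=\cA_\Phi=\suppex^{-1}(\Phi)\cap\cA$ by \cref{exlem}. In particular every object of $\Inj_\Phi\cA$ lies in $\cL^{\bot_{\geq0}}$, and $\cL^{\bot_{\geq0}}\subseteq\cL^{\bot_{\leq1}}$ always holds.

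\emph{Step 3: the cycle of implications.}

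(1)$\Rightarrow$(3): By Step 2, $\cL^{\bot_{\geq0}}=\cA_\Phi$, and this is thick by \cref{thi}.

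(3)$\Rightarrow$(2): Take $M\in\cL^{\bot_{\leq1}}$. Then $E^0,E^1\in\Inj_\Phi\cA\subseteq\cL^{\bot_{\geq0}}$, and $M=\Ker(E^0\to E^1)$. Thickness of $\cL^{\bot_{\geq0}}$ therefore gives $M\in\cL^{\bot_{\geq0}}$, which is the missing inclusion.

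(2)$\Rightarrow$(1): Take $f\colon I^0\to I^1$ in $\Inj_\Phi\cA$, and set $K=\Ker f$, $J=\Im f$, $C=\Coker f$.
\begin{itemize}
\item By \cref{s1}, $K\in\cL^{\bot_{\leq1}}=\cL^{\bot_{\geq0}}$.
\item The sequence $0\to K\to I^0\to J\to 0$ then gives $\Ext^i(\cL,J)=0$ for all $i\geq 0$.
\item The sequence $0\to J\to I^1\to C\to 0$ gives $\Hom(\cL,C)\hookrightarrow\Ext^1(\cL,J)=0$, so $C\in\cL^\bot=P^{-1}(\Phi)$.
\end{itemize}
Consequently $I^2:=E(C)\in\Inj_\Phi\cA$, and $I^0\to I^1\to I^2$ is the exact sequence required for coherence.
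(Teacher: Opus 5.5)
Your proof is correct. It runs the cycle in the opposite direction from the paper, (1)$\Rightarrow$(3)$\Rightarrow$(2)$\Rightarrow$(1), and is organized differently.

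\textbf{The paper's route.} The paper cites Krause's Theorem 3.3 and Lemma 3.1 for the hereditary torsion theory $(\cL,P^{-1}(\Phi))$. Your direct argument is fine: closure under subobjects, products and extensions gives a torsion-free class in any Grothendieck category, and closure under injective envelopes makes it hereditary. The paper then proves (1)$\Rightarrow$(2) by hand. For $M\in\cL^{\bot_{\leq 1}}$ it applies coherence repeatedly to extend $0\to M\to I^0\to I^1$ to an injective resolution with all terms in $\Inj_\Phi\cA$. Its (2)$\Rightarrow$(3) is \cref{s1} plus long exact sequences. Its (3)$\Rightarrow$(1) observes that $\Coker(I^0\to I^1)$ lies in the thick class $\cL^{\bot_{\geq 0}}$ and then takes an injective envelope.

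\textbf{Your route.} You first translate both perpendicular classes into conditions on the minimal injective resolution: $E^0,E^1\in\Inj_\Phi\cA$ for $\cL^{\bot_{\leq 1}}$, and all $E^i\in\Inj_\Phi\cA$ for $\cL^{\bot_{\geq 0}}$. This makes explicit the step $Z^1\in\cL^{\bot}$, which the paper only asserts. It also gives $\cL^{\bot_{\geq 0}}=\cA_\Phi$ in the present setting, without the finite-type hypothesis used in \cref{eqs}.

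With this identification, (1)$\Leftrightarrow$(3) becomes the statement that $\Phi$ is coherent if and only if $\cA_\Phi$ is thick, and (1)$\Rightarrow$(3) is just \cref{thi}. The cost is that the iteration of coherence is hidden inside the mapping-cone argument of \cref{thi}. Your route is therefore less self-contained than the paper's direct construction of the resolution.

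Your other two steps are slightly more economical than the paper's. In (3)$\Rightarrow$(2) you simply exhibit $M$ as $\Ker(E^0\to E^1)$. In (2)$\Rightarrow$(1) you show only what coherence needs, namely $\Hom(\cL,\Coker f)=0$ via $\Ext^1(\cL,\Im f)=0$, rather than full thickness of $\cL^{\bot_{\geq 0}}$.
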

\begin{proof}
Since $P^{-1}(\Phi)$ is essentially closed by \cite[Theorem 3.3]{Kr2024} and closed under products, it follows from \cite[Lemma 3.1]{Kr2024} that $(\cL,P^{-1}(\Phi))$ a hereditary torsion theory. This implies that $\cL^\bot=P^{-1}(\Phi)$ and we notice that $\cL^\bot$ is closed under injective envelopes by \cite[Chap VI, Proposition 3.2]{St}.

(1)$\Rightarrow$ (2): It is clear that $\cL^{\bot_{\geq 0}}\subseteq\cL^{\bot_{\leq 1}}$. For other inclusion, let $M\in\cL^{\bot_{\leq 1}}$ and let $0\To M\To I^0\To I^1\to\dots $ be a  minimal injective resolution of $M$. Then  $Z^1=\Ker(I^1\to I^2)$ is in $\cL^\bot$. Since $\cL^{\bot}$ is essentially closed, $I^0, I^1\in\cL^\bot$. This implies that $P(I^0),P(I^1) \in\Phi$ by \cite[Theorem 3.3]{Kr2024}. Now, since $\Phi$ is coherent, there exists an injective object $E^2\in\Inj_\Phi\cA$ such that $0\To M\To I^0\To I^1\To E^2$ is exact and $\Ker(I^1\To E^2)\in\cL^{\bot_{\leq 1}}$. Proceeding this way we have an injective resolution \[0\To M\To I^0\To I^1\To E^2\to E^3\to \dots\]
such that $P(E^i)\in \Phi$ for each $i$. As $I^i$ is a direct summand of $E^i$ for each $i$, we conclude that $I^i\in\Inj_\Phi\cA$ so that $I^i\in P^{-1}(\Phi)=\cL^\bot$ for each $i$. Hence $M\in\cL^{\bot_{\geq 0}}$.

 (2)$\Rightarrow$(3): Let $M\stackrel{f}\To N$ be a morphism of objects in $\cL^{\bot_{\geq 0}}$ with $K=\Ker f$, $I=\Im\ f$ and $C=\Coker f$. It follows from \cref{s1} that $K\in\cL^{\bot_{\geq 0}}$ and so a standard argument implies that $I,C\in \cL^{\bot_{\geq 0}}$. Moreover, it is clear that $\cL^{\bot_{\geq 0}}$ is closed under extensions. 
 
 (3)$\Rightarrow$(1): Let $I^0\To I^1$ be a morphism in $\Inj_\Phi\cA$. Then $\Hom(\cL,I^0)=\Hom(\cL,I^1)=0$; and hence $I^0,I^1\in\cL^{\bot_{\geq 0}}$. Since
 $\cL^{\bot_{\geq 0}}$ is thick, $C=\Coker(I^0\to I^1)\in \cL^{\bot_{\geq 0}}$ and we can consider $I^2=E(C)$.
\end{proof}

\medskip

\begin{Proposition}
Every specialization-closed subcategory of $\Spec \cA$ is coherent. 
\end{Proposition}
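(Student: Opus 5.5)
We need to show: given a morphism $f\colon I^0\to I^1$ in $\Inj_\Phi\cA$ with $\Phi$ specialization-closed, $f$ fits into an exact sequence $I^0\to I^1\to I^2$ with $I^2\in\Inj_\Phi\cA$. The natural choice is $C=\Coker f$ and $I^2=E(C)$, so everything reduces to showing $P(E(C))\in\Phi$.

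The first step is to decompose $I^1$ according to the image of $f$. Put $B=\Im f\subseteq I^1$ and choose a complement $K$ of $B$ in $I^1$, i.e. a subobject maximal with respect to $K\cap B=0$. Then $B\oplus K$ is essential in $I^1$. Taking an injective envelope $E(B)$ inside $I^1$, we get
\[I^1\cong E(B)\oplus E',\qquad K\subseteq E',\]
with $K$ essential in $E'$. Since $\Phi$ is localizing in the spectral category, it is closed under direct summands. Hence $P(E(B))$ and $P(E')$ both lie in $\Phi$.

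Next, the cokernel splits accordingly:
\[C\cong E(B)/B\,\oplus\, E'.\]
Because $E'$ is already injective, $E(C)\cong E(E(B)/B)\oplus E'$, and $P(E')\in\Phi$. Since $\Phi$ is closed under coproducts, it remains to show $P\bigl(E(E(B)/B)\bigr)\in\Phi$.

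This is the main obstacle. It is not clear how to relate $E(B)/B$ to $I^0$ through a morphism between injectives whose image is essential in the target; that is the only situation the specialization-closed hypothesis controls. My plan is as follows.

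\begin{enumerate}
\item Consider the composite $g\colon I^0\to E(B)$. Its image is $B$, which is essential in $E(B)$. If $B\neq 0$, specialization-closedness gives $P(E(B))\in\Phi$, though we already knew this.
\item To handle $E(B)/B$, use injectivity of $I^0$ to split off $\Ker f$. Its injective envelope $E(\Ker f)$ is a summand of $I^0$, say $I^0=E(\Ker f)\oplus I'$. Then $B$ is a quotient of $I'$.
\item Take a nonzero indecomposable summand $E$ of $E(E(B)/B)$. It contains a nonzero subobject $U\subseteq E(B)/B$ with $U$ essential in $E$. Pull $U$ back to $\tilde U\subseteq E(B)$.
\item I would like a morphism $I^0\to E$ with essential image. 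The route I would try first is to compose $I^0\to E(B)$ with a map $E(B)\to E$ extending $\tilde U\to U\hookrightarrow E$; such an extension exists by injectivity of $E$. However, this composite kills $B$, so it is zero on $I^0$, and this attempt fails.
\item Instead, I would apply the specialization-closed condition to maps $E(B)\to E$ that are nonzero with essential image. These come from the extension $E(B)\to E$ of $\tilde U\to U$: its image contains $U$, so it is essential in $E$.
\end{enumerate}

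Step 5 yields $P(E)\in\Phi$ from $P(E(B))\in\Phi$, which we know. Summing over summands (the discrete part is handled this way; for the continuous part, argue with essential subobjects directly, or use \cite[Proposition 2.14]{Kr2024} to reduce to generators of $\langle P(E(C))\rangle$), we get $P(E(C))\in\Phi$. This proves that $\Phi$ is coherent.
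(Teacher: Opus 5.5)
Your Step 5 has the right mechanism, but as written the proof is incomplete. You prove $P(E)\in\Phi$ only for indecomposable summands $E$ of $E(E(B)/B)$ and then appeal to ``summing over summands''. In a general Grothendieck category an injective object can have a nonzero continuous part with no indecomposable summands at all, so this does not give $P(E(E(B)/B))\in\Phi$. Your suggested remedies for the continuous part are only gestures, not arguments.

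Nothing in Step 5 actually uses indecomposability. Take $E=E(E(B)/B)$, $U=E(B)/B$ and $\tilde U=E(B)$. The composite $E(B)\twoheadrightarrow E(B)/B\hookrightarrow E(E(B)/B)$ is nonzero when $E(B)/B\neq 0$. Its image is essential in the target, and its source lies in $\Inj_\Phi\cA$. So specialization-closedness settles the whole envelope at once, discrete and continuous parts together.

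More importantly, the splitting $I^1\cong E(B)\oplus E'$, Steps 1--4, and every reference to $I^0$ are unnecessary. Your ``main obstacle'' came from looking for a suitable map out of $I^0$. The definition only requires the source of the morphism to lie in $\Inj_\Phi\cA$, and $I^1$ does. The composite $I^1\twoheadrightarrow C\hookrightarrow E(C)$ has image $C$, which is essential in $E(C)$. So if $C\neq 0$, specialization-closedness gives $P(E(C))\in\Phi$ immediately, and the case $C=0$ is trivial. This one-step argument is what the paper's terse proof is implicitly doing when it sets $I^2=E(C)$ and invokes the definition. In particular, the hypothesis $P(I^0)\in\Phi$ plays no role.
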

\begin{proof}
Let $I^0\To I^1$ be a non-zero morphism of injective objects such that $I^0,I^1\in\Inj_\Phi\cA$. Set $C=\Coker(I^0\To I^1)$ and $I^2=E(C)$. Then since $\Phi$ is specialization-closed, we have $I^2\in\Inj_\Phi\cA$.
\end{proof}

For an object $X\in\cA$, Krause \cite{Kr2024} defined the \emph{support} of $X$ as 
\[
  \Supp(X):=\inf\{\cU\in{\bf L}(\Spec\cA)\mid P(X)\in \cU\}=\langle P(X)\rangle.
\]
For a class $\cX\subseteq\cA$, set
\[\Supp(\cX):=\bigvee_{X\in\cX}\Supp(X)=\bigvee_{X\in\cX}\langle P(X)\rangle=\langle
    P(\cX)\rangle.\]
    
 A localizing subcategory is called
\emph{stable} if it is closed under essential extensions, equivalently
if it is closed under injective envelopes. It is clear that stable localizing subcategories are essentially closed. Gabriel \cite{G1962} showed that over a commutative noetherian ring $A$, there is an inclusion-preserving bijection between the set of localizing subcategories of $\Mod A$ and the set of subsets of $\Spec A$ closed under specialization. The following theorem extends this theorem for Grothendieck categories.
\medskip

\begin{Theorem}\label{stablesub}
 The assignment $\cC\longmapsto \Supp(\cC)=P(\cC)$ induces a lattice isomorphism 
\[\{\text{stable localizing subcategories of } \cA\}\longiso\{\text{specialization-closed 
subcategories of } \Spec\cA\}.\]
The inverse map is given by $\Phi\longmapsto P^{-1}(\Phi)$.
\end{Theorem}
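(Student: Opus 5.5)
The plan is to restrict Krause's lattice isomorphism \cite[Theorem 3.3]{Kr2024} between essentially closed subcategories of $\cA$ and ${\bf L}(\Spec\cA)$ to the two classes in the statement. That isomorphism is given by $\cC\mapsto\langle P(\cC)\rangle=\Supp(\cC)$, with inverse $\Phi\mapsto P^{-1}(\Phi)$. A stable localizing subcategory is closed under subobjects, coproducts and essential extensions, so it is essentially closed. Hence $\Supp(\cC)$ is a localizing subcategory of $\Spec\cA$ with $P^{-1}(\Supp(\cC))=\cC$, and for every $\Phi$ we have $\Supp(P^{-1}(\Phi))=\Phi$. Once both maps are shown to land in the right classes, they are mutually inverse and inclusion preserving, hence a lattice isomorphism. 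So it remains to check two things: $\Supp(\cC)$ is specialization-closed when $\cC$ is stable localizing, and $P^{-1}(\Phi)$ is stable localizing when $\Phi$ is specialization-closed.

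\emph{From $\cC$ to $\Phi$.} Set $\Phi=\Supp(\cC)$, so $P^{-1}(\Phi)=\cC$; for an injective $I$ this gives $I\in\Inj_\Phi\cA$ if and only if $I\in\cC$. Let $I^0\to I^1$ be a non-zero morphism of injectives with $I^0\in\cC$ and image essential in $I^1$. The image is a quotient of $I^0$, so it lies in $\cC$. Then $I^1$ is an essential extension of an object of $\cC$, so $I^1\in\cC$ because $\cC$ is stable. Therefore $P(I^1)\in\Phi$.

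\emph{From $\Phi$ to $\cC$.} Set $\cC=P^{-1}(\Phi)$. It is essentially closed by \cite[Theorem 3.3]{Kr2024}, so it is closed under subobjects, coproducts, essential extensions and hence injective envelopes. Two properties remain.
\begin{enumerate}
\item \emph{Quotients.} Let $X\to Y$ be an epimorphism with $X\in\cC$ and $Y\neq 0$. By injectivity, the composite $X\to Y\to E(Y)$ extends to a morphism $E(X)\to E(Y)$. Its image contains $Y$, so it is essential in $E(Y)$, and the morphism is non-zero. Since $E(X)\in\cC$, specialization-closedness gives $E(Y)\in\cC$. Then $Y\in\cC$ as a subobject of $E(Y)$.
\item \emph{Extensions.} Let $0\to X\to Y\to Z\to 0$ be exact with $X,Z\in\cC$. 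The horseshoe construction embeds $Y$ into $E(X)\oplus E(Z)$, which lies in $\cC$. Hence $Y\in\cC$ as a subobject.
\end{enumerate}
Thus $\cC$ is localizing, and it is stable because it is closed under essential extensions.

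The only step needing care is the quotient step. There one has to produce a non-zero morphism of injectives with essential image in order to invoke specialization-closedness; the extension of $X\to Y\to E(Y)$ to $E(X)\to E(Y)$ provides exactly this.
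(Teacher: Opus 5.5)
Your proposal is correct. For the direction showing that $\Supp(\cC)$ is specialization-closed, your argument matches the paper's: the paper says the image lies in the torsion subobject $t(I^1)$, while you say the image is a quotient of $I^0$, and the step is otherwise identical.

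You differ in the converse direction. The paper first notes that $P^{-1}(\Phi)$ is essentially closed. It then invokes \cite[Lemma 5.11]{Kr2024} to reduce the claim to the equality $P^{-1}(\Phi)=\cA_\Phi=\suppex^{-1}(\Phi)$. It proves this equality with a minimal injective resolution $0\to M\to E^0\to E^1\to\cdots$: specialization-closedness is applied to each map $E^i\to E^{i+1}$, whose image $Z^{i+1}$ is essential, so $P(E^0)\in\Phi$ propagates to every $P(E^i)$.

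You instead check the localizing axioms directly. You observe that any essentially closed subcategory is already closed under extensions, via the embedding $Y\hookrightarrow E(X)\oplus E(Z)$. So specialization-closedness is needed only for quotients, and only once, for the lift $E(X)\to E(Y)$.

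Your route is self-contained, relies only on \cite[Theorem 3.3]{Kr2024}, and shows exactly where the hypothesis is used. The paper's route yields more: it shows that $P^{-1}(\Phi)$ coincides with $\suppex^{-1}(\Phi)$. Membership is therefore detected by exact support, and the subcategory is cohomologically stable, which ties the theorem into the paper's exact-support framework.
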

\begin{proof}
Let $\cC$ be a stable localizing subcategory of $\cA$ and $\Phi$ be a specialization-closed subset of $\Spec\cA$. We show that $P(\cC)$ is specialization-closed. Let $I^0\To I^1$ be a non-zero morphism of injectives module such that $\Im(I^0\To I^1)$ is essential in $I^1$ and $P(I^0)\in P(\cC)$. Since $\cC$ is essentially closed, it follows from \cite[Theorem 3.3]{Kr2024} that $I^0\in\cC$ and so $\Im(I^0\To I^1)$ is contained in $t(I^1)$, where $t(I^1)$ is the largest subobject if $I^1$ belonging to $\cC$. But this implies that $t(I^1)$ is an essential subobject of $I^1$; and hence $E(t(I^1))=I^1$. Now, since $\cC$ is stable localizing, we conclude that $I^1\in\cC$.

 We now show  $P^{-1}(\Phi)$ is a stable localizing  subcategory of $\cA$. According to \cite[Theorem 3.3]{Kr2024}, $P^{-1}(\Phi)$ is essentially closed; and hence by \cite[Lemma 5.11]{Kr2024} it suffices to show that $P^{-1}(\Phi)=\cA_{\Phi}$. Let $M\in P^{-1}(\Phi)$ and $0\To M\To E^0\To E^1\To\dots$ be a minimal injective resolution of $M$ such that $Z^i=\Ker(E^i\To E^{i+1})$ for each $i$. Since $\Phi$ is specialization-closed, $Z^i$ is an essential subobject of $E^i$ and $P(E^0)\in \Phi$, we conclude that $P(E^i)\in\Phi$ for each $i$. This implies that $M\in\cA_{\Phi}$. The inclusion  $\cA_{\Phi}\subset P^{-1}(\Phi)$ is clear by the definition. Since $\cC$ is essentially closed, it follows from \cite[Theorem 3.3]{Kr2024} that  $P^{-1}(P(\cC))=\cC$ and 
  $P(P^{-1}(\Phi))=\Phi$. 
  \end{proof}
  
  \medskip
   The specialization-closed subcategories are partially ordered by inclusion and closed under arbitrary intersections.  Then they form a complete lattice.


\section{In the case of Locally coherent categories}
Throughout this section, $\cA$ is a locally coherent category with $\fp\cA$, the full subcategory of $\cA$ consisting of finitely presented objects. We study the lattice of Serre subcategories of $\fp\cA$ and the lattice of localizing subcategories of finite type of $\cA$. 

\medskip
\begin{Definition}
\begin{enumerate}
\item An object $M$ in $\cA$ is {\it finitely generated} if whenever there are subobjects $M_i\leq M$ for $i\in I$ satisfying $M=\underset{i\in I}\Sigma M_i$, then there is a finite subset $J\subseteq I$ such that $M=\underset{i\in J}\Sigma M_i$.
\item A finitely generated object $Y$ in $\cA$ is {\it finitely presented} if every epimorphism $f:X\To Y$ in $\cA$ with $X$ finitely generated has a finitely generated kernel $\Ker f$.

\item A finitely presented object in $\cA$ is {\it coherent} if every its finitely generated subobject is finitely presented.

 We denote by fg-$\cA$, fp-$\cA$ and coh-$\cA$, the full subcategories of $\cA$ consisting of finitely generated, finitely presented and coherent objects, respectively. 

\item We recall that a Grothendieck category $\cA$ is {\it locally coherent} if every object in $\cA$ is a direct limit of coherent objects; or equivalently finitely generated subobjects of finitely presented objects are finitely presented. According to \cite[2]{Ro} and \cite{He1997} a Grothendieck category $\cA$ is locally coherent if and only if fp-$\cA=$coh-$\cA$ is an abelian category. 

\item A localizing subcategory $\cX$ of $\cA$ is said to be of {\it finite type} provide that the corresponding right adjoint  of the inclusion functor $\cX\To\cA$ commutes with direct limits; equivalently if the corresponding torsion-free subcategory is closed under direct limits. We denote the collection of localizing subcategories of $\cA$ of finite type by $\Lf(\cA)$. 
 \end{enumerate}
 \end{Definition}
 
 For a full subcategory $\cX$ of $\fp\cA$, we denote by $\overrightarrow{\cX}$ the full subcategory of $\cA$ consisting of direct limits $\underset{\rightarrow}{\rm lim}X_i$ with $X_i\in\cX$ for all $i$. According to \cite[Corollary 2.10]{Kr1997} for every  localizing subcategory $\cX$ of  finite type of $\cA$, we  have $\cX=\overrightarrow{\cX\cap\fp\cA}$.
\medskip

\begin{Proposition}\label{subla}
$\Lf(\cA)$ is a complete lattice. In particular,  \[\bigwedge_\alpha\cU_{\alpha}=\overrightarrow{(\bigcap_\alpha\cU_\alpha)\cap\fp\cA}\] for any family $(\cU_\alpha)$ in $\Lf(\cA)$.
\end{Proposition}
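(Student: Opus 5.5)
To show $\Lf(\cA)$ is a complete lattice I will exhibit arbitrary meets; arbitrary joins then exist formally as the meet of all upper bounds, and $\cA$ itself is a top element. The $\fp\cA$-level description of finite type subcategories will do the work. I will use the standard correspondence, implicit in \cite[Corollary 2.10]{Kr1997} and Herzog \cite{He1997}, which I take as available. For $\cX\in\Lf(\cA)$, the category $\cX\cap\fp\cA$ is a Serre subcategory of $\fp\cA$ with $\cX=\overrightarrow{\cX\cap\fp\cA}$. Conversely, for every Serre subcategory $\cS$ of $\fp\cA$, the category $\overrightarrow{\cS}$ is a localizing subcategory of finite type with $\overrightarrow{\cS}\cap\fp\cA=\cS$.

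Given a family $(\cU_\alpha)$ in $\Lf(\cA)$, set $\cS=(\bigcap_\alpha\cU_\alpha)\cap\fp\cA=\bigcap_\alpha(\cU_\alpha\cap\fp\cA)$. An intersection of Serre subcategories of the abelian category $\fp\cA$ is again Serre, so $\cS$ is Serre. Put $\cU=\overrightarrow{\cS}$, which lies in $\Lf(\cA)$ by the correspondence.

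Next I check that $\cU$ is a lower bound. Every $X\in\cU$ is a direct limit of objects of $\cS\subseteq\cU_\alpha$. Each $\cU_\alpha$ is closed under coproducts and quotients, hence under direct limits, so $X\in\cU_\alpha$ for all $\alpha$.

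Finally, $\cU$ is the greatest lower bound. If $\cV\in\Lf(\cA)$ satisfies $\cV\subseteq\cU_\alpha$ for all $\alpha$, then $\cV\cap\fp\cA\subseteq\cS$. Hence $\cV=\overrightarrow{\cV\cap\fp\cA}\subseteq\overrightarrow{\cS}=\cU$. This gives $\bigwedge_\alpha\cU_\alpha=\overrightarrow{(\bigcap_\alpha\cU_\alpha)\cap\fp\cA}$.

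The main point needing care is the claim that $\overrightarrow{\cS}$ is localizing of finite type for an arbitrary Serre subcategory $\cS$ of $\fp\cA$. This is needed because $\bigcap_\alpha\cU_\alpha$ itself is localizing but need not be of finite type, since its torsion-free class is generated by a union that may fail to be closed under direct limits. I would cite Krause/Herzog for this; if a direct argument is wanted, I would note that in a locally coherent category closure of $\overrightarrow{\cS}$ under subobjects and extensions reduces to finitely presented pieces, and that the torsion-free class $\cS^{\bot}$ is closed under direct limits because objects of $\cS$ are finitely presented.
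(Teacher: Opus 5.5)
Your proof is correct, but it goes the opposite way from the paper's.

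The paper proves completeness through joins. For a family $(\cU_\alpha)$ in $\Lf(\cA)$ it takes the join $\cU=\bigvee_\alpha\cU_\alpha$ in ${\bf L}(\cA)$ and uses \cref{krlem} to get $t_\cU=\sum_\alpha t_{\cU_\alpha}$. It then notes that a sum of radical functors commuting with direct limits again commutes with direct limits, so $\cU$ is of finite type. Completeness follows from \cite[Chap. III, Proposition 1.2]{St}, and the meet formula is simply declared clear.

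You instead build meets directly from the correspondence $\cS\mapsto\overrightarrow{\cS}$ of \cite[Corollary 2.10]{Kr1997}. This is precisely the argument needed to justify the displayed formula, which the paper leaves unproved. Your caveat that $\bigcap_\alpha\cU_\alpha$ need not itself be of finite type is also right, and it explains the arrow. For example, in $\Mod k[x_1,x_2,\dots]$ the $(x_1,\dots,x_n)$-torsion classes all contain $k$, but they have no nonzero finitely presented module in common.

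What your route does not give is that joins in $\Lf(\cA)$ coincide with joins in ${\bf L}(\cA)$, since you obtain joins only formally as meets of upper bounds. The paper uses this coincidence later, e.g.\ in \cref{compact} and \cref{four}, where \cref{krlem} is applied to joins formed in $\Lf(\cA)$. On the other hand, the paper's join argument needs no local coherence at all. The meet formula is exactly where local coherence enters, through the correspondence you cite.
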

\begin{proof}
 We show that $\Lf(\cA)$ has arbitrary joins. Let $(\cU_\alpha)$ be a family in $\Lf(\cA)$ and $\cU=\bigvee_\alpha\cU_\alpha$ be join in $\bf L\cA$. Then, by \cref{krlem}, we have $$t_\cU(\underset{\rightarrow}{\rm lim}X_i)=\sum_{\alpha}t_{\cU_{\alpha}}(\underset{\rightarrow}{\rm lim}X_i)=\underset{\rightarrow}{\rm lim}\sum_{\alpha}t_{\cU_{\alpha}}(X_i)=\underset{\rightarrow}{\rm lim}t_{\cU}(X_i)$$ which implies that $\bigvee_\alpha\cU_\alpha\in\Lf(\cA).$
It follows from \cite[Chap. III, Proposition 1.2]{St} that $\Lf(\cA)$ is a complete lattice. The equality is clear. 
\end{proof} 
\medskip
\begin{Remark}\label{remm}
We observe that ${\Lf}(\cA)$ is a sublattice of ${\bf L}(\cA)$. To this end, we show that it is closed under finite intersections. Given $\cU,\cV\in\Lf(\cA)$ and $X\in\cU\cap\cV$, we have $X=\underset{\rightarrow}{\rm lim}X_i=\underset{\rightarrow}{\rm lim}Y_i$ where $X_i\in\cU\cap \fp\cA$ and $Y_i\in\cV\cap \fp\cA$ for any $i,j$. Hence every canonical morphism $X_i\To\underset{\rightarrow}{\rm lim}X_i$ factors through $f_i:X_i\to Y_{\alpha_i}$ for some $Y_{\alpha_j}$. We observe that $Z_i=\Im f_i\in\cU\cap\cV\cap\fp\cA$ and there is a canonical epimorphism $\coprod_i Z_i\twoheadrightarrow X$ so that $X\in\overrightarrow{\cU\cap\cV\cap\fp\cA}$. Consequently, $\cU\cap\cV=\overrightarrow{\cU\cap\cV\cap\fp\cA}\in\Lf(\cA)$. 
\end{Remark}
A subcategory $\cS$ of $\fp\cA$ is {\it Serre} if it is closed under subobjects, quotients and extensions. For $\cX\subset\fp\cA$, the smallest Serre subcategory of $\fp\cA$ containing $\cX$ is denoted by $\sqrt{\cX}$. 

\medskip

\begin{Proposition}\label{compact}
$\Lf(\cA)$ is compactly generated.
\end{Proposition}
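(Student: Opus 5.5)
To show $\Lf(\cA)$ is compactly generated, I will identify its compact elements with the localizing subcategories of finite type generated by a single finitely presented object, and check that every element is a join of such. For $M\in\fp\cA$, put $\cU_M:=\overrightarrow{\sqrt{\{M\}}}$. The first step is to check that $\cU_M\in\Lf(\cA)$. The cleanest route is Herzog's/Krause's correspondence: for a Serre subcategory $\cS$ of $\fp\cA$, the class $\overrightarrow{\cS}$ is a localizing subcategory of finite type with $\overrightarrow{\cS}\cap\fp\cA=\cS$. I take this from \cite{Kr1997}, which the paper already cites for $\cX=\overrightarrow{\cX\cap\fp\cA}$.

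Next, every $\cU\in\Lf(\cA)$ is the join of the $\cU_M$ for $M\in\cU\cap\fp\cA$. Each $\cU_M$ lies in $\cU$, because $\cU\cap\fp\cA$ is Serre in $\fp\cA$ and $\cU$ is closed under direct limits. Conversely, every $X\in\cU$ is a direct limit of objects of $\cU\cap\fp\cA$ by \cite[Corollary 2.10]{Kr1997}. Hence $X$ lies in the localizing subcategory generated by the $\cU_M$, which is contained in their join in $\Lf(\cA)$ (by \cref{subla} this join is the join in ${\bf L}(\cA)$).

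The remaining step, and the main obstacle, is compactness of $\cU_M$. Suppose $\cU_M\subseteq\bigvee_\alpha\cU_\alpha$ in $\Lf(\cA)$, and write $\cU=\bigvee_\alpha\cU_\alpha$. Then $M\in\cU$, so $t_\cU(M)=M$. By \cref{krlem}, $M=\sum_\alpha t_{\cU_\alpha}(M)$. Since $M$ is finitely generated, $M=\sum_{\alpha\in J}t_{\cU_\alpha}(M)$ for some finite set $J$. Each $t_{\cU_\alpha}(M)$ lies in $\cU_\alpha$, so $M$ is a quotient of $\bigoplus_{\alpha\in J}t_{\cU_\alpha}(M)\in\bigvee_{\alpha\in J}\cU_\alpha$. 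Thus $M\in\bigvee_{\alpha\in J}\cU_\alpha$. The latter is of finite type, and its intersection with $\fp\cA$ is Serre and contains $M$, hence contains $\sqrt{\{M\}}$. Taking direct limits gives $\cU_M\subseteq\bigvee_{\alpha\in J}\cU_\alpha$.

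The delicate points are two. First, \cref{krlem} must be applicable; this holds because by \cref{subla} the join in $\Lf(\cA)$ agrees with the join in ${\bf L}(\cA)$. Second, passing from the direct sum of the $t_{\cU_\alpha}(M)$ to a conclusion about $\fp\cA$ is handled by working in ${\bf L}(\cA)$ and then using that $M$ itself is finitely presented. This shows that every element of $\Lf(\cA)$ is a join of compact elements, so $\Lf(\cA)$ is compactly generated.
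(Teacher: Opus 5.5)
Your proof is correct and follows the same route as the paper. The compact generators are $\overrightarrow{\sqrt{M}}$ for $M\in\fp\cA$, compactness comes from \cref{krlem} together with finite generation of $M$, and generation is $\cU=\bigvee_{M\in\cU\cap\fp\cA}\overrightarrow{\sqrt{M}}$. You are somewhat more careful than the paper in two places: the paper phrases compactness as the equality $\overrightarrow{\sqrt{M}}=\bigvee_\alpha\cU_\alpha$ rather than the inclusion $\subseteq$, and it does not spell out, as you do via \cref{subla}, that joins in $\Lf(\cA)$ are the joins in ${\bf L}(\cA)$, which is what makes \cref{krlem} applicable.
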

\begin{proof}
We show that $\overrightarrow{\sqrt{M}}$ is an compact object of $\Lf(\cA)$ for any finitely presented object $M$. If  $\overrightarrow{\sqrt{M}}=\bigvee_\alpha\cU_\alpha$, by \cref{krlem}, we have 
$M=\sum_\alpha t_{\cU_{\alpha}}(M)$. Since $M$ is finitely presented, there exists a positive integer $n$ such that $M=\sum_{i=1}^n t_{\cU_{\alpha_i}}(M)$. This implies that $\overrightarrow{\sqrt{M}}=\bigvee_{i=1}^n\cU_{\alpha_i}$. For any $\cU\in\Lf(\cA)$, we have $\cU=\bigvee_{M\in\cU\cap\fp\cA}\overrightarrow{\sqrt{M}}$; and hence $\Lf(\cA)$ is compactly generated.
\end{proof}

\medskip

\begin{Lemma}\label{serrtwo}
Let $\cS_1,\cS_2$ be Serre subcategories of $\fp\cA$.  

${\rm (i)}$ If $X\in(\overrightarrow{\cS_1}\vee\overrightarrow{\cS_2})\cap\fp\cA$, then
$X=X_1+X_2$ such that $X_i\in\cS_i$;

${\rm (ii)}$ $\sqrt{\cS_1\cup\cS_2}=(\overrightarrow{\cS_1}\vee\overrightarrow{\cS_2})\cap\fp\cA$.
\end{Lemma}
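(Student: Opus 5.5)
The plan is to describe the torsion radical of the join $\cU=\overrightarrow{\cS_1}\vee\overrightarrow{\cS_2}$ explicitly and then use that finitely presented objects are compact. By \cref{krlem}, for every $X\in\cA$ we have $t_\cU(X)=t_{\overrightarrow{\cS_1}}(X)+t_{\overrightarrow{\cS_2}}(X)$. Here $\overrightarrow{\cS_i}$ is the localizing subcategory of finite type attached to the Serre subcategory $\cS_i$; by \cite[Corollary 2.10]{Kr1997} we also have $\overrightarrow{\cS_i}\cap\fp\cA=\cS_i$, and we may use this.

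For (i), let $X\in\cU\cap\fp\cA$. Then $X=t_\cU(X)=t_{\overrightarrow{\cS_1}}(X)+t_{\overrightarrow{\cS_2}}(X)$. Each $t_{\overrightarrow{\cS_i}}(X)$ lies in $\overrightarrow{\cS_i}$, so it is a directed union of finitely presented subobjects. More precisely, it is a direct limit of objects of $\cS_i$, so it is the directed sum of the images of these objects in $X$. Those images are finitely generated subobjects of the coherent object $X$, so they are finitely presented. They are also quotients of objects of $\cS_i$, so they lie in $\cS_i$. Hence $X$ is the directed sum of subobjects of the form $Y_1+Y_2$ with $Y_i\in\cS_i$ and $Y_i\subseteq t_{\overrightarrow{\cS_i}}(X)$. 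Since $X$ is finitely generated, one such sum already equals $X$, and setting $X_i=Y_i$ gives (i).

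For (ii), the inclusion $\supseteq$ follows from (i). Given $X=X_1+X_2$, there is an epimorphism $X_1\oplus X_2\to X$. The object $X_1\oplus X_2$ lies in $\sqrt{\cS_1\cup\cS_2}$, since this is closed under extensions and hence under finite direct sums. Because $\sqrt{\cS_1\cup\cS_2}$ is closed under quotients inside $\fp\cA$, and $X$ is finitely presented, we get $X\in\sqrt{\cS_1\cup\cS_2}$. For $\subseteq$, note that $(\overrightarrow{\cS_1}\vee\overrightarrow{\cS_2})\cap\fp\cA$ is a Serre subcategory of $\fp\cA$. Indeed, a localizing subcategory is closed under subobjects, quotients and extensions, and $\fp\cA$ is an abelian exact subcategory closed under these operations inside $\fp\cA$. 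This Serre subcategory contains $\cS_1\cup\cS_2$, so it contains $\sqrt{\cS_1\cup\cS_2}$ by minimality.

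The main obstacle is the step in (i) where each torsion part $t_{\overrightarrow{\cS_i}}(X)$ is written as a directed union of finitely presented subobjects belonging to $\cS_i$. This uses local coherence (finitely generated subobjects of $X$ are finitely presented) together with closure of $\cS_i$ under quotients within $\fp\cA$. Compactness of $X$ then turns the infinite sum into a finite one.
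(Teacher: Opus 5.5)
Your proposal is correct and follows essentially the same route as the paper. You write $X=t_{\overrightarrow{\cS_1}}(X)+t_{\overrightarrow{\cS_2}}(X)$ via \cref{krlem}, exhaust each torsion part by finitely presented subobjects lying in $\cS_i$ using local coherence, and use finite generation of $X$ to reduce to a single sum $X_1+X_2$; (ii) then follows from (i) together with the minimality of $\sqrt{\cS_1\cup\cS_2}$. The only differences are cosmetic: you take one directed family $\{Y_1+Y_2\}$ where the paper uses two successive quotient steps, and you spell out the two inclusions in (ii) that the paper calls clear.
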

\begin{proof}
To prove (i), it follows from \cref{krlem} that $X=t_{\overrightarrow{\cS_1}}(X)+t_{\overrightarrow{\cS_2}}(X)$. We notice that $t_{\overrightarrow{\cS_1}}(X)=\bigcup_iY_i$ is the direct union of its finitely generated subobjects and hence each $Y_i$ is  finitely presented as it is a subobject of $X$. We now have $X/t_{\overrightarrow{\cS_2}}(X)=\bigcup_i(Y_i+t_{\overrightarrow{\cS_2}}(X))/t_{\overrightarrow{\cS_2}}(X)$ and since  $X/t_{\overrightarrow{\cS_2}}(X)$ is finitely generated, there exits some $i$ such that  $X=Y_i+ t_{\overrightarrow{\cS_2}}(X)$. Again $X/Y_i=\bigcup_i(Y_i+Z_j)/Y_i$, where $t_{\overrightarrow{\cS_2}}(X)=\bigcup_iZ_i$ is the direct union of its finitely generated subobjects and so each $Z_i$ is finitely presented. Since $X/Y_i$ is finitely generated, there exists some $j$ such that $X=Y_i+Z_j$. Since $\overrightarrow{\cS_1}$ and $\overrightarrow{\cS_1}$ are of finite type, it follows from \cite[Corollary 2.10]{Kr1997} that $X_1=Y_i\in\cS_1$ and $X_2=Z_j\in\cS_2$. For (ii), the inclusion $\subseteq$ is clear. If $X\in (\overrightarrow{\cS_1}\vee\overrightarrow{\cS_2})\cap\fp\cA$, then by (i) there exist subobjects $X_i\in\cS_i$ such that $X=X_1+X_2$; and hence $X\in\sqrt{\cS_1\cup\cS_2}$.
\end{proof}

\medskip
Serre subcategories of $\fp\cA$ are partially ordered by inclusion. For Serre subcategories $\cS_1$ and $\cS_2$ of $\fp\cA$, we define $\cS_1\vee\cS_2:=\sqrt{\cS_1\cup\cS_2}=(\overrightarrow{\cS_1}\vee\overrightarrow{\cS_2})\cap\fp\cA$ and $\cS_1\wedge\cS_2:=\cS_1\cap\cS_2$. As it is closed under arbitrary intersections, they form a complete lattice and we denote it by $\bL(\fp\cA)$.
\medskip

\begin{Lemma}\label{four}
\begin{enumerate}

\item If $X\in\bigvee_{\alpha}\cS_\alpha$, then  $X=X_{\alpha_1}+\dots+X_{\alpha_n}$ such that   $X_{\alpha_i}\in\cS_{\alpha_i}$ for each $i$;

\item  If $\{\cU_\alpha\}$  is a family in $\Lf(\cA)$, then $(\bigvee_\alpha\cU_\alpha)\cap\fp\cA=\bigvee_\alpha
(\cU_\alpha\cap\fp\cA)$;

\item  Let $\{\cS_\alpha\}$ be a family in ${\bf L}(\fp\cA)$. Then 

 ${\rm (i)}$ $\bigvee_\alpha\cS_\alpha=(\bigvee_\alpha
\overrightarrow{\cS_\alpha})\cap\fp\cA$
and $\bigcap_\alpha\cS_\alpha=(\bigwedge\overrightarrow{\cS_\alpha})\cap\fp\cA;$

 ${\rm (ii)}$  $\overrightarrow{\bigcap_\alpha\cS_\alpha}=\bigwedge\overrightarrow{\cS_\alpha}$  and $\overrightarrow{\bigvee\cS_\alpha}=\bigvee\overrightarrow{\cS_\alpha}$.
\end{enumerate}
\end{Lemma}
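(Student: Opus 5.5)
Part (1) is proved by induction on finite joins, and the infinite case is then reduced to the finite one. Here $\bigvee_\alpha\cS_\alpha$ means the smallest Serre subcategory of $\fp\cA$ containing all $\cS_\alpha$. Its objects are generated from $\bigcup_\alpha\cS_\alpha$ by finitely many operations (subobjects, quotients, extensions). Hence every $X\in\bigvee_\alpha\cS_\alpha$ already lies in $\bigvee_{i=1}^n\cS_{\alpha_i}$ for some finite family.

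To make this precise, I would show that $\bigcup_{F}\bigvee_{\alpha\in F}\cS_\alpha$, with $F$ running over finite index sets, is itself a Serre subcategory. It is closed under subobjects and quotients because each term is. Given an extension whose ends lie in terms indexed by $F_1$ and $F_2$, both ends lie in the term indexed by $F_1\cup F_2$, so the middle does too. For finitely many indices, \cref{serrtwo}(ii) gives $\cS_1\vee\cS_2=(\overrightarrow{\cS_1}\vee\overrightarrow{\cS_2})\cap\fp\cA$. Then \cref{serrtwo}(i) writes $X=X_1+X_2$ with $X_i\in\cS_i$. Induction on $n$ (note $\cS_{\alpha_1}\vee\dots\vee\cS_{\alpha_{n-1}}$ is Serre) yields $X=X_{\alpha_1}+\dots+X_{\alpha_n}$.

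For (2), the inclusion $\supseteq$ is clear, since each $\cU_\alpha\cap\fp\cA$ lies in $(\bigvee\cU_\alpha)\cap\fp\cA$, which is Serre in $\fp\cA$. For $\subseteq$, take $X$ finitely presented in $\bigvee\cU_\alpha$. By \cref{krlem}, $X=\sum_\alpha t_{\cU_\alpha}(X)$, and since $X$ is finitely generated, finitely many summands already suffice. Each $t_{\cU_{\alpha_i}}(X)$ is a directed union of finitely generated subobjects, which are finitely presented by local coherence and lie in $\cU_{\alpha_i}\cap\fp\cA$. Finite generation of $X$ again lets us choose one finitely presented piece $Y_i\subseteq t_{\cU_{\alpha_i}}(X)$ for each $i$ with $X=\sum Y_i$. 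This is the same argument as in \cref{serrtwo}(i), iterated. Then $X$ is a quotient of $\coprod_{i} Y_i$, which lies in $\bigvee_\alpha(\cU_\alpha\cap\fp\cA)$.

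For (3)(i), apply (2) with $\cU_\alpha=\overrightarrow{\cS_\alpha}$, which belong to $\Lf(\cA)$ and satisfy $\overrightarrow{\cS_\alpha}\cap\fp\cA=\cS_\alpha$; this is the standard Herzog/Krause correspondence, \cite[Corollary 2.10]{Kr1997}. The meet statement follows from \cref{subla}: $(\bigwedge\overrightarrow{\cS_\alpha})\cap\fp\cA=\overrightarrow{\bigcap_\alpha\overrightarrow{\cS_\alpha}\cap\fp\cA}\cap\fp\cA=\bigcap_\alpha\cS_\alpha$. For (3)(ii), the meet equality is \cref{subla} rewritten. For the join, both sides are in $\Lf(\cA)$ and have the same intersection with $\fp\cA$ by (3)(i), so they agree by the bijection $\cU\mapsto\cU\cap\fp\cA$, namely $\cU=\overrightarrow{\cU\cap\fp\cA}$.

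The main obstacle is the identification $\overrightarrow{\cS}\cap\fp\cA=\cS$ and the membership $\overrightarrow{\cS}\in\Lf(\cA)$ for a Serre subcategory $\cS$; I would cite Krause \cite{Kr1997} and Herzog \cite{He1997} for both rather than reprove them.
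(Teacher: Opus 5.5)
Your proof is correct. Part (3) is handled exactly as in the paper, but (1) and (2) take a somewhat different route.

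In the paper, (1) is proved by the same mechanism as (2). Since $X\in(\bigvee_\alpha\overrightarrow{\cS_\alpha})\cap\fp\cA$, \cref{krlem} gives $X=\sum_\alpha t_{\overrightarrow{\cS_\alpha}}(X)$. Finite presentation cuts this down to finitely many $\alpha_i$, and \cref{serrtwo} is then applied to $n$ subcategories at once; its extension from two to $n$ summands is left implicit.

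You instead reduce to finitely many indices by a purely formal property of Serre closures: the directed union of the finite joins is already Serre, which holds in any abelian category. You then make the passage to $n$ summands explicit by induction on the two-term \cref{serrtwo}. This is slightly more careful than the paper at that point.

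In (2), the paper rewrites $\cU_\alpha=\overrightarrow{\cS_\alpha}$ and invokes \cref{serrtwo}, whose last step uses \cite[Corollary 2.10]{Kr1997} to place the chosen finitely presented pieces in $\cS_\alpha$. You observe directly that a finitely generated subobject of $t_{\cU_{\alpha_i}}(X)$ is finitely presented by local coherence. It lies in $\cU_{\alpha_i}$ simply because localizing subcategories are closed under subobjects.

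This bypasses \cref{serrtwo} and \cite{Kr1997} at that step. It also shows that (2) holds for arbitrary localizing subcategories of $\cA$, with the join taken in ${\bf L}(\cA)$. The finite-type hypothesis is then needed only in (3), where the identities $\overrightarrow{\cS_\alpha}\cap\fp\cA=\cS_\alpha$ and $\cU=\overrightarrow{\cU\cap\fp\cA}$ are used.
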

\begin{proof}
To prove (1), it follows from \cref{krlem} that $X=\sum_{\alpha}t_{\overrightarrow{\cS_{\alpha}}}(X)$. Since  $X$ is finitely presented, there exists a positive integer $n$ such that $X=\sum_{i=1}^nt_{\overrightarrow{\cS_{\alpha_i}}}(X)$. Hence $X\in \bigvee_{i=1}^n\overrightarrow{\cS_{\alpha_i}}$ and so by \cref{serrtwo}  there exist $X_i\in\cS_{\alpha_i}$ such that $X=X_{\alpha_1}+\dots+X_{\alpha_n}$. 

(2): The inclusion $\bigvee_\alpha
(\cU_\alpha\cap\fp\cA)\subseteq (\bigvee_\alpha\cU_\alpha)\cap\fp\cA$ is clear. For the converse, since each $\cU_{\alpha}$ is of finite type, if we set $\cS_\alpha=\cU_\alpha\cap \fp\cA$, then $\cU_{\alpha}=\overrightarrow{\cS_\alpha}$. If $X\in(\bigvee_\alpha\cU_\alpha)\cap\fp\cA$, using again \cref{krlem}, we have $X=\sum_{\alpha}t_{\cU_{\alpha}}(X)$. Then $X=\sum_{i=1}^nt_{\cU_{\alpha_i}}(X)$ as $X$ is finitely presented. Thus $X\in(\bigvee_{i=1}^n\overrightarrow{\cS_{\alpha_i}})\cap \fp\cA$ and so \cref{serrtwo} implies that $X=X_1+\dots+X_n$ such that $X_i\in\cS_{\alpha_i}$ for each $i$. Therefore $X\in\bigvee_\alpha\cS_{\alpha}$.

(3): If we set $\cS_{\alpha}=\overrightarrow{\cS_\alpha}\cap\fp\cA$, then the  first equality of (i) follows from (2) and the second follows from \cref{subla}. (ii) follows from (i) easily.
\end{proof}
\medskip

\begin{Theorem}\label{iserre}
Let $\cA$ be a locally coherent category. Then the assignment 
$\cS\longmapsto\overrightarrow{\cS}$ induces a lattice isomorphism 
\[\bL(\fp\cA)\longiso \Lf(\cA).\]
The inverse map is given by $\cL\longmapsto \cL\cap\fp\cA.$ Moreover, the maps preserve arbitrary joins and meets.
\end{Theorem}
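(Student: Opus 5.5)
The plan is to check that the two maps are well defined, that they are mutually inverse, and then to read off preservation of joins and meets from \cref{four}.

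\textbf{The map $\cS\mapsto\overrightarrow{\cS}$.} Let $\cS$ be a Serre subcategory of $\fp\cA$. We show that $\overrightarrow{\cS}$ is a localizing subcategory of finite type. The natural route is the standard construction: let $\cL$ be the class of objects $X$ all of whose finitely generated (equivalently, by local coherence, finitely presented) subobjects lie in $\cS$.
\begin{itemize}
\item \emph{$\cL=\overrightarrow{\cS}$.} Every $X$ is the direct union of its finitely generated subobjects, and these are finitely presented when $X$ is a direct limit of objects of $\cS\subseteq\fp\cA$. So $\cL\subseteq\overrightarrow{\cS}$. Conversely, a finitely generated subobject of $\varinjlim X_i$ is the image of a finitely presented object mapping into some $X_i$. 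It is therefore a quotient of a subobject of $X_i$, and lies in $\cS$ because $\cS$ is Serre and $\fp\cA$ is abelian.
\item \emph{$\cL$ is localizing.} Closure under subobjects, quotients and coproducts is then routine.
\item \emph{Extensions.} Closure under extensions is the main obstacle. Given $0\to X'\to X\to X''\to 0$ and a finitely generated $F\le X$, the image of $F$ in $X''$ is in $\cS$, and $F\cap X'$ is a direct union of finitely generated subobjects lying in $\cS$. We need $F\cap X'$ itself to be finitely generated. This follows because $F$ and its image are finitely presented, so the kernel is finitely generated, hence in $\cS$. Closure of $\cS$ under extensions then gives $F\in\cS$.
\item \emph{Finite type.} That $\cL$ is of finite type follows from \cite{Kr1997} (or \cite{He1997}): localizing subcategories generated by finitely presented objects in a locally coherent category are of finite type.
\end{itemize}

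\textbf{The map $\cL\mapsto\cL\cap\fp\cA$.} Conversely, $\cL\cap\fp\cA$ is clearly Serre in $\fp\cA$ for any localizing $\cL$.

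\textbf{Mutually inverse.} For $\cL\in\Lf(\cA)$, the identity $\cL=\overrightarrow{\cL\cap\fp\cA}$ is \cite[Corollary 2.10]{Kr1997}. For a Serre subcategory $\cS$, the identity $\overrightarrow{\cS}\cap\fp\cA=\cS$ follows from the characterization of $\cL$ above: a finitely presented $X\in\overrightarrow{\cS}$ is its own finitely generated subobject, so $X\in\cS$.

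\textbf{Joins and meets.} Both maps are clearly inclusion preserving, so they are lattice isomorphisms. Preservation of arbitrary joins and meets is exactly \cref{four}(3)(ii) for $\overrightarrow{(-)}$, and \cref{four}(2) together with \cref{subla} for $(-)\cap\fp\cA$.
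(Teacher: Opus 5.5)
Your outline (well-definedness, the identity $\overrightarrow{\cL\cap\fp\cA}=\cL$ via \cite[Corollary 2.10]{Kr1997}, joins and meets via \cref{four} and \cref{subla}) agrees with the paper, which simply cites Krause for both $\overrightarrow{\cL\cap\fp\cA}=\cL$ and $\overrightarrow{\cS}\cap\fp\cA=\cS$. The problem is the self-contained verification you add. The class $\cL$ of objects all of whose finitely generated subobjects lie in $\cS$ is in general \emph{not} $\overrightarrow{\cS}$.

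Your parenthetical ``equivalently, by local coherence, finitely presented'' is false. Local coherence only makes finitely generated subobjects of \emph{finitely presented} objects finitely presented. For a counterexample, let $\cA=\Mod R$ with $R=k[x_1,x_2,\dots]$ (a coherent ring) and $\cS=\fp\cA$, so $\overrightarrow{\cS}=\cA$. The module $k=R/(x_1,x_2,\dots)$ is finitely generated but not finitely presented. It is therefore a finitely generated subobject of itself lying outside $\cS$, so $k\in\overrightarrow{\cS}$ but $k\notin\cL$. Moreover $R\in\cL$ and $k$ is a quotient of $R$, so your $\cL$ is not even closed under quotients and is not localizing.

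The precise failing step is ``it is therefore a quotient of a subobject of $X_i$, and lies in $\cS$ because $\cS$ is Serre''. The kernel of $\Im(G\to X_i)\to F$ is only a directed union of finitely generated subobjects, so $F$ need not be finitely presented. The Serre property of $\cS$ only covers quotients taken inside $\fp\cA$. Your argument would work in a locally noetherian category, where finitely generated and finitely presented objects coincide, but not in the locally coherent setting.

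Consequently, two parts of your argument are unsupported as written: your proof that $\overrightarrow{\cS}$ is localizing, and your derivation of $\overrightarrow{\cS}\cap\fp\cA=\cS$ from the characterization of $\cL$.

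The second identity has a direct one-line proof. If $X\in\fp\cA$ and $X=\varinjlim X_i$ with $X_i\in\cS$, then $\mathrm{id}_X$ factors through some $X_i$, so $X$ is a direct summand of $X_i$ and lies in $\cS$.

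For the first, you have two options. You can cite Krause's correspondence between Serre subcategories of $\fp\cA$ and $\Lf(\cA)$, as the paper does. Alternatively, use the correct description: $X\in\overrightarrow{\cS}$ if and only if every morphism $C\to X$ with $C\in\fp\cA$ factors through an object of $\cS$. Under this condition the maps $S\to X$ with $S\in\cS$ form a filtered cofinal subdiagram, because $\cS$ is closed under finite sums and under cokernels in $\fp\cA$.

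The part on joins and meets is fine and matches the paper.
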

\begin{proof}
By virtue of \cite[Corollary 2.10]{Kr1997}, we have $\overrightarrow{\cL\cap\fp\cA}=\cL$ and $\overrightarrow{\cS}\cap\fp\cA=\cS$. The facts and assertions follow from \cref{four}.
\end{proof}

It follows from \cite[Proposition 2.3]{Kr2024} that the complete lattice ${\bf L}(\cA)$ is farme which means that: for an element $\cU$ and a family $(\cV_\alpha)$ in
  ${\bf L}(\cA)$ we have the following equality
  \[\cU\wedge\left(\bigvee_\alpha\cV_\alpha
  \right)=\bigvee_\alpha(\cU\wedge\cV_\alpha).\]
  
\medskip

\begin{Corollary}
Let $\cA$ be a locally coherent category. Then 
 ${\bf L}(\fp\cA)$ is frame and  compactly generated.
\end{Corollary}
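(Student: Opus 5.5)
The plan is to transport both properties from $\Lf(\cA)$ to ${\bf L}(\fp\cA)$ through the lattice isomorphism of \cref{iserre}. That isomorphism preserves arbitrary joins and meets, so it is an isomorphism of complete lattices. Any property defined purely in terms of the complete lattice structure therefore passes from one side to the other. Being a frame and being compactly generated are both such properties. So it suffices to show that $\Lf(\cA)$ is a frame and is compactly generated.

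For compact generation, \cref{compact} already gives the result for $\Lf(\cA)$. I would then check directly that the compact elements match up. Under the isomorphism, $\overrightarrow{\sqrt{M}}$ corresponds to $\sqrt{M}$ for $M\in\fp\cA$. Every $\cS\in{\bf L}(\fp\cA)$ satisfies $\cS=\bigvee_{M\in\cS}\sqrt{M}$. Compactness of $\sqrt{M}$ can also be seen concretely from \cref{four}(1): if $M\in\bigvee_\alpha\cS_\alpha$, then $M$ is a finite sum of subobjects lying in finitely many $\cS_{\alpha_i}$, so $M$, and hence $\sqrt{M}$, already lies in a finite subjoin.

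For the frame property, the main step is to show that $\Lf(\cA)$ satisfies infinite distributivity. Take $\cU$ and $(\cV_\alpha)$ in $\Lf(\cA)$.
\begin{itemize}
\item By \cref{subla} and \cref{remm}, finite meets in $\Lf(\cA)$ are intersections, computed exactly as in ${\bf L}(\cA)$.
\item By the proof of \cref{subla}, joins in $\Lf(\cA)$ coincide with joins in ${\bf L}(\cA)$.
\item Hence $\Lf(\cA)$ is closed under the operations appearing in the frame identity. That identity holds in ${\bf L}(\cA)$ by \cite[Proposition 2.3]{Kr2024}, so it restricts to $\Lf(\cA)$.
\end{itemize}

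The main obstacle is making sure the meet $\cU\wedge\cV_\alpha$ in $\Lf(\cA)$ really equals $\cU\cap\cV_\alpha$, with no passage through the finite-type closure $\overrightarrow{(-)}$. This is exactly what \cref{remm} establishes. The join, in turn, must be the join taken in ${\bf L}(\cA)$, which the proof of \cref{subla} shows.

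As a direct alternative that avoids this check, I would verify distributivity in ${\bf L}(\fp\cA)$ itself. The inclusion $\supseteq$ is clear. For $\subseteq$, take $X\in\cS\cap\bigvee_\alpha\cT_\alpha$. By \cref{four}(1), $X=X_1+\dots+X_n$ with each $X_i\in\cT_{\alpha_i}$. Each $X_i$ is a subobject of $X\in\cS$, so $X_i\in\cS\cap\cT_{\alpha_i}$. Therefore $X\in\bigvee_\alpha(\cS\cap\cT_\alpha)$.
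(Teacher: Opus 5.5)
Your proposal is correct and follows the paper's own argument. $\Lf(\cA)$ is a frame because its joins and finite meets agree with those of ${\bf L}(\cA)$ (\cref{subla}, \cref{remm}), so \cite[Proposition 2.3]{Kr2024} applies; it is compactly generated by \cref{compact}; and both properties transfer to ${\bf L}(\fp\cA)$ along the isomorphism of \cref{iserre}. Your supplementary direct check, via \cref{four}(1), of distributivity and of compactness of $\sqrt{M}$ inside ${\bf L}(\fp\cA)$ is also sound, and it has the small advantage of not needing Krause's frame result at all.
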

\begin{proof}
By \cref{subla} and \cref{remm}, $\Lf(\cA)$ is a sublattice of ${\bf L}(\cA)$, it follows from \cite[Proposition 2.3]{Kr2024} that $\Lf(\cA)$ is frame. Hence \cref{iserre} implies that ${\bf L}(\fp\cA)$ is frame. The second assertion follows from \cref{compact} and \cref{iserre}.
\end{proof}

\medskip
For a locally coherent category $\cA$, Herzog \cite{He1997} defined  the Ziegler spectrum of $\cA$ which induces  a topology on $\Sp\cA$. This topology was originally defined by Ziegler \cite{Zi} associating to a ring $A$, a topological space whose points are the isomorphism classes of indecomposable pure-injective $A$-modules. \begin{Definition}
 For an object $M$ in $\cA$, set  $$\cO(M)=\{I\in\Sp\cA|\hspace{0.1cm} \Hom(M,I)\neq 0\}.$$  
\end{Definition}
For any subcategory $\cX$ of $\fp\cA$, set $\cO(\cX)=\bigcup_{M\in\cX}\cO(M).$ 
Herzog \cite{He1997} proved that the collection $\{\cO(M)|\hspace{0.1cm} M\in\fp\cA\}$  satisfies the axioms for a basis of open subsets defining  the {\it Ziegler topology} of $\cA$.

For $M\in\fp\cA$, we define {\it the Ziegler support of} $M$  which is a localizing subcategory in ${\bf L}(\Spec\cA)$ defined as $\SSupp (M)= \langle P(\cO(M)\rangle.$ For an arbitrary subcategory $\cC$ of $\fp\cA$, we set $\SSupp(\cC)=\langle P(\cO(\cC)\rangle.$
For every $\cU\in\bL(\dSpec \cA)$, set $$\SSupp^{-1}(\cU)=\{M\in\fp\cA\mid \SSupp(M)\subseteq \cU\}.$$ 

The following lemma, part (1) shows that $\SSupp^{-1}(\cU)\in{\bf L}(\fp\cA)$. Set
$${\bf L}(\dSpec\cA)=\{\SSupp(\cC)|\hspace{0.1cm}\cC\hspace{0.1cm}  \textnormal{is a Serre subcategory of}\hspace{0.1cm} \fp\cA\}.$$  We notice that ${\bf L}(\dSpec\cA)$ is  partially ordered by inclusion and so it is a lattice in which the joins coincide with those in ${\bf L}(\Spec\cA)$.
\medskip
\begin{Lemma}\label{prp}
\begin{enumerate}
\item If $0\To N\To M\To K\To 0$ is an exact sequence in $\fp\cA$, then $$\SSupp(M)=\SSupp(N)\vee\SSupp(K).$$
In particular $\SSupp^{-1}(\cU)$ is a Serre subcategory of $\fp\cA$ for every $\cU\in{\bf L}(\dSpec\cA)$.
\item For any subcategory $\cC$ of $\fp\cA$:

${\rm (i)}$  $\SSupp(\cC)=\SSupp(\sqrt{\cC})$;

${\rm (ii)}$  $\SSupp(\cC)=\bigvee_{M\in\cC}\SSupp(M);$

${\rm (iii)}$  $\SSupp^{-1}(\SSupp(\cC))=\sqrt{\cC}$ for $\cC\subset\fp\cA$.\\
For $\cS_1,\cS_2\in{\bf L}(\fp\cA)$ and $\cU_1,\cU_2\in{\bf L}(\dSpec\cA)$, we have:
\item $\SSupp(\cS_1\vee\cS_2)=\SSupp(\cS_1)\vee\SSupp(\cS_2)$.
\item $\ZSupp^{-1}(\cU_1\vee\cU_2)=\ZSupp^{-1}(\cU_1)\vee\ZSupp^{-1}(\cU_1).$ 
\item  $\SSupp(\cS_1\cap\cS_2)=\SSupp(\cS_1)\cap\SSupp(\cS_2)$.
\item $\ZSupp^{-1}(\cU_1\cap\cU_2)=\ZSupp^{-1}(\cU_1)\cap\ZSupp^{-1}(\cU_1).$
\end{enumerate}
\end{Lemma}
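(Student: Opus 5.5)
The plan is to reduce everything to statements about the open sets $\cO(-)$ in $\Sp\cA$, and then transport them to $\Spec\cA$ through $\langle P(-)\rangle$. The dictionary I intend to use is Krause's lattice isomorphism \cite[Theorem 3.3]{Kr2024} between essentially closed subcategories of $\cA$ and localizing subcategories of $\Spec\cA$. Under it, for any set $\cO$ of indecomposable injectives, $P^{-1}(\langle P(\cO)\rangle)\cap\Sp\cA=\cO$ and $\langle P(\cO_1)\rangle\vee\langle P(\cO_2)\rangle=\langle P(\cO_1\cup\cO_2)\rangle$. Indeed, the essentially closed subcategory generated by $\cO$ consists of objects whose injective envelope is a summand of a coproduct of members of $\cO$; an indecomposable injective of that form lies in $\cO$ by Krull–Schmidt–Azumaya type uniqueness of decompositions of injectives in a locally noetherian-like setting. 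In a general locally coherent category I would instead use that $P(E)$ for $E\in\Sp\cA$ is simple in $\Spec\cA$, so it lies in $\langle P(\cO)\rangle$ iff it is isomorphic to some $P(E')$ with $E'\in\cO$. So $\SSupp$ is monotone and join-preserving in $\cO$, and it detects the discrete part.

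For (1), I take $0\to N\to M\to K\to 0$ in $\fp\cA$ and check $\cO(M)=\cO(N)\cup\cO(K)$. The inclusion $\supseteq$ uses injectivity of $E$: a nonzero map $N\to E$ extends to $M$, and a nonzero map $K\to E$ composes to a nonzero map $M\to E$. The inclusion $\subseteq$ holds because a nonzero $M\to E$ either kills $N$, and then factors through $K$, or does not. Applying $\langle P(-)\rangle$ gives the equality of Ziegler supports. This shows $\SSupp^{-1}(\cU)$ is closed under subobjects, quotients and extensions. It is nonempty, as it contains $0$.

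For (2), (ii) is immediate, since $\cO(\cC)=\bigcup_{M\in\cC}\cO(M)$ and $\langle-\rangle$ turns unions into joins. For (i), $\sqrt{\cC}$ is obtained from $\cC$ by iterating subobjects, quotients and extensions. Part (1) shows that none of these enlarges $\SSupp$: for a subobject or quotient the containment holds, and for an extension we get equality with the join. So $\SSupp(\sqrt\cC)\subseteq\SSupp(\cC)$, and the reverse inclusion is trivial.

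For (iii), the inclusion $\supseteq$ follows from (i). For $\subseteq$, let $M\in\fp\cA$ with $\SSupp(M)\subseteq\SSupp(\cC)$; by the dictionary this means $\cO(M)\subseteq\cO(\sqrt\cC)$. I then pass to $\cL=\overrightarrow{\sqrt\cC}\in\Lf(\cA)$ via \cref{iserre}. Herzog's theorem \cite{He1997} says that $\cL$ is determined by its torsion-free indecomposable injectives: $E\in\Sp\cA$ is $\cL$-torsion-free iff $E\notin\cO(\sqrt\cC)$, and a finitely presented $M$ lies in $\cL$ iff $\Hom(M,E)=0$ for all such $E$. This is where finite type is used, because the torsion-free class is closed under direct limits and is cogenerated by its indecomposable injectives. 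Hence $M\in\cL\cap\fp\cA=\sqrt\cC$. I expect this step to be the main obstacle: it needs the Herzog/Krause correspondence between finite-type localizing subcategories and Ziegler-open sets, so I would cite it rather than reprove it.

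Parts (3)–(6) then follow from the resulting order isomorphism $\cS\mapsto\SSupp(\cS)$ between $\bL(\fp\cA)$ and $\bL(\dSpec\cA)$, with inverse $\SSupp^{-1}$ given by (iii).

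For (3), I compute $\SSupp(\sqrt{\cS_1\cup\cS_2})=\SSupp(\cS_1\cup\cS_2)=\SSupp(\cS_1)\vee\SSupp(\cS_2)$ using (i) and (ii).

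For (4), I write $\cU_i=\SSupp(\cS_i)$, use (3), and apply (iii).

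For (5), the inclusion $\subseteq$ is monotonicity. For $\supseteq$, I use $\cO(\cS_1\cap\cS_2)=\cO(\cS_1)\cap\cO(\cS_2)$, which comes from Herzog's result that $\cO(M)\cap\cO(N)=\bigcup\{\cO(L)\}$ over finitely presented $L$ in $\sqrt M\cap\sqrt N$. I combine this with the fact that $\langle P(-)\rangle$ preserves intersections of subsets of $\Sp\cA$ on the discrete part. Here I would restrict to $\bL(\dSpec\cA)$, whose elements are generated by simples, so they are discrete.

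For (6), I apply $\SSupp^{-1}$ to (5) and use (iii).
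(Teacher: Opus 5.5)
Your proposal is correct and follows the paper's route. Everything is reduced to statements about the Ziegler-open sets $\cO(-)$, with Herzog's Theorem 3.8 doing the real work in (2)(iii) and his Theorem 3.4 in (5); you also make explicit, via simplicity of $P(E)$ in $\Spec\cA$, the step $\SSupp(M)\subseteq\SSupp(\cC)\Rightarrow\cO(M)\subseteq\cO(\cC)$ that the paper uses tacitly. The small deviations are all fine: you derive (2)(i) by induction from (1) instead of citing Herzog's Propositions 3.2--3.3, and you get (4) formally from (3) and (2)(iii) instead of the decomposition $M=M_1+M_2$ of \cref{four}. Note that (6) is in fact immediate from the definition of $\SSupp^{-1}$.
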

\begin{proof}
(1) follows from the fact that $\cO(M)=\cO(N)\cup\cO(K)$. The second assertion is clear.

 (2): (i) follows as $\cO(\cC)=\cO(\sqrt{\cC})$ by \cite[Propositions 3.2 and 3.3]{He1997} and $\cO(\cC)=\bigcup_{M\in\cC}\cO(M)$. (ii) Let $P(E)\in \SSupp(\cC)$. We may assume that $P(E)$ is simple. Then there exists $M\in\cC$ such that $P(E)\in\SSupp(M)$. The converse is clear.
(iii): Let $M\in\SSupp^{-1}(\SSupp(\cC))$. Then $\SSupp(M)\subseteq \SSupp(\cC)$ and so $\cO(M)\subseteq\cO(\cC)$. It follows from \cite[Theorem 3.8]{He1997} that $M\in\sqrt{\cC}$. The converse is clear. 

(3): $\SSupp(\cS_1\vee\cS_2)=\SSupp(\cS_1\cup\cS_2)=\SSupp(\cS_1)\vee\SSupp(\cS_2)$ where the last equality follows from (2)(ii).

(4): Given $M\in\ZSupp^{-1}(\cU_1\vee\cU_2)$, we have $\SSupp(M)\subseteq \cU_1\vee\cU_2$.  By the definition, there exist $\cS_1,\cS_2\in{\bf L}(\fp\cA)$ such that $\cU_i=\SSupp(\cS_i)$ for $i=1,2$. Then (3) implies that $\cO(M)\subseteq \cO(\cS_1\vee\cS_2)$, so that $M\in\cS_1\vee\cS_2$. According to \cref{four} there exists $M_i\in\cS_i$ such that $M=M_1+M_2$. Thus $M_i\in\SSupp^{-1}(\cU_i)$ and  so $M\in\ZSupp^{-1}(\cU_1)\vee\ZSupp^{-1}(\cU_2)$ The converse is clear.

 (5): Let $P(E)\in\SSupp(\cS_1)\wedge\SSupp(\cS_2)$. We may assume that $P(E)$ is simple and so by the proof of \cite[Theorem 3.4]{He1997}, we have $E\in\cO(\cS_1\cap\cS_2)$. This implies that $P(E)\in\ZSupp(\cS_1\cap\cS_2)$. The converse is clear. (6) is clear.
 \end{proof}

\begin{Remark}
We observe  that ${\bf L}(\dSpec\cA)$ is a complete lattice. More precisely. if  $(\SSupp(\cC_{\alpha}))$ is a family of localizing subcatgories in $\bL(\dSpec\cA)$, it follows from \cref{prp} that $\bigvee\SSupp_\alpha(\cC_{\alpha})=\SSupp(\sqrt{\bigcup_\alpha\cC_{\alpha}})\in\bL(\dSpec\cA)$. Therefore \cite[Chap III, Proposition 1.2]{St} implies that ${\bf L}(\dSpec\cA)$ is a complete lattice
\end{Remark}
\medskip

Our next result shows that the Ziegler support in ${\bf L}(\dSpec\cA)$ and the support of localizing subcategories of finite type of $\cA$ defined by Krause \cite{Kr2024} are closely related to open subsets in the Ziegler topology of $\cA$.
\medskip
\begin{Proposition}\label{lft}
Let $\cC$ be a localizing subcategory of finite type of $\cA$. Then 
\begin{enumerate}
\item $\Supp(\cC)_d=\SSupp (\cC\cap\fp\cA)$.
\item $P^{-1}(\Supp(\cC))\cap \Sp\cA=\cO(\cC\cap\fp\cA)$.
\end{enumerate}
\end{Proposition}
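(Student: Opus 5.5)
I plan to work at the level of indecomposable injectives, that is, simple objects of $\Spec\cA$. The guiding observation is that a localizing subcategory of a spectral category is determined by its indecomposable objects together with its continuous component. Moreover, its discrete component is exactly the localizing subcategory generated by the simple objects $P(E)$, $E\in\Sp\cA$, that it contains. For the discrete parts it therefore suffices to show that, for $E\in\Sp\cA$,
\[
P(E)\in\Supp(\cC)\iff E\in\cO(\cC\cap\fp\cA).
\]
This equivalence is exactly statement (2), since $P^{-1}(\Supp(\cC))\cap\Sp\cA$ is the set of $E\in\Sp\cA$ with $P(E)\in\Supp(\cC)$. So I will prove (2) first and deduce (1) from it.

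For (2), I will use \cite[Theorem 3.3]{Kr2024}: $P^{-1}(\Supp(\cC))=P^{-1}\langle P(\cC)\rangle$ is the essential closure $\langle\cC\rangle$ in $\cA$. Since $\cC$ is localizing, hence closed under coproducts and subobjects, this closure consists of the objects that are essential extensions of objects of $\cC$. Hence $P(E)\in\Supp(\cC)$ if and only if $E$ has an essential subobject lying in $\cC$. As $E$ is indecomposable injective, every nonzero subobject is essential, so the condition becomes $t_\cC(E)\neq 0$, that is, $\Hom(X,E)\ne0$ for some $X\in\cC$. Finally, since $\cC$ is of finite type, \cite[Corollary 2.10]{Kr1997} gives $\cC=\overrightarrow{\cC\cap\fp\cA}$. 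Any nonzero map $X\to E$ from $X=\varinjlim X_i$ restricts nontrivially to some $X_i\in\cC\cap\fp\cA$. Thus $\Hom(X,E)\neq0$ for some $X\in\cC$ if and only if $\Hom(M,E)\ne0$ for some $M\in\cC\cap\fp\cA$, i.e. $E\in\cO(\cC\cap\fp\cA)$. This proves (2).

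For (1), I compare the two discrete components via their simple objects. By definition $\SSupp(\cC\cap\fp\cA)=\langle P(\cO(\cC\cap\fp\cA))\rangle$ is generated by the simples $P(E)$ with $E\in\cO(\cC\cap\fp\cA)$, so it is discrete. By (2) these are exactly the simples lying in $\Supp(\cC)$, so $\SSupp(\cC\cap\fp\cA)\subseteq\Supp(\cC)_d$. For the reverse inclusion, take $Y\in\Supp(\cC)_d$; then $Y=\soc(Y)$ is a coproduct of simples $P(E)$. Each such $P(E)$ is a summand of $Y$, hence lies in $\Supp(\cC)$, hence lies in $\SSupp(\cC\cap\fp\cA)$ by (2), and closure under coproducts finishes the argument.

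I expect the main obstacle to be the identification $P^{-1}\langle P(\cC)\rangle=\langle\cC\rangle$ together with the description of $\langle\cC\rangle$ as essential extensions of objects of $\cC$. It has to be checked that this class is already essentially closed (closed under coproducts and subobjects), which follows from the corresponding properties of $\cC$ and the intersection of an essential subobject with a subobject. A second point to verify is that a simple object of $\Spec\cA$ lies in a localizing subcategory $\Phi$ precisely when it is a summand of an object of $\Phi$, which holds because localizing subcategories of a spectral category are closed under direct summands.
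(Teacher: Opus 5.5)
Your proof is correct, and its skeleton matches the paper's. Both reduce to simples $P(E)$ with $E\in\Sp\cA$, translate $P(E)\in\Supp(\cC)$ into $\Hom(X,E)\neq 0$ for some $X\in\cC$, and use finite type to pass to finitely presented objects. The key lemma and the order of the two parts differ, however.

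\textbf{The paper's route.} It proves (1) first, and only for simple objects; the phrase ``we may assume $E$ is indecomposable'' hides the reduction from an arbitrary discrete object to its simple summands. Given $P(E)\in\Supp(\cC)$, it finds a single $M\in\cC$ with $P(E)$ a summand of $P(E(M))$, via \cite[Proposition 2.14]{Kr2024}. It then applies \cite[Lemma 2.1]{Kr2024} to get $X\subseteq M$ with $E(X)=E$. Part (2) repeats this argument, and its converse is read off from (1).

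\textbf{Your route.} You treat all of $\cC$ at once through \cite[Theorem 3.3]{Kr2024}, identifying $P^{-1}(\Supp(\cC))$ with the essential closure of $\cC$. For localizing $\cC$, this is the class of objects $Y$ with $t_\cC(Y)$ essential in $Y$, and uniformity of $E$ reduces everything to $t_\cC(E)\neq 0$.

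\textbf{What each buys.} Your approach gives a clean description of the simples in $\Supp(\cC)$ valid for every localizing $\cC$; finite type enters only at the last step. It makes (2) the primary statement and makes the passage from simples to the whole discrete component in (1) explicit. The price is reliance on the full lattice isomorphism of Theorem 3.3, rather than the more local Proposition 2.14 and Lemma 2.1 applied object by object.

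\textbf{One small point.} You need the class of essential extensions of objects of $\cC$ to be closed under coproducts. That requires a coproduct of essential monomorphisms to be essential. This holds in a Grothendieck category by AB5: write $\coprod_\alpha Y_\alpha$ as the directed union of its finite subcoproducts. The intersection argument you mention does not cover it, so add it explicitly.
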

\begin{proof}
(1): Let $P(E)\in\Supp(\cC)_d$. We may assume that $E$ is an indecomposable injective object and so $P(E)$ is a direct summand of $P(E(M))$ for an object $M\in\cC$. Then by \cite[Lemma 2.1]{Kr2024}, there exists an subobject $X$ of $M$ such that $E(X)=E$; and hence $\Hom(M,E)\neq 0$. Hence there exists a finitely presented object $N$ of $\cC$ such that $\Hom(N,E)\neq 0$ as $\cC$ is of finite type. Conversely, if $P(E)\in \langle P(\cO(\cC\cap\fp\cA))\rangle$ for some indecomposable injective object $E\in\cO(\cC\cap\fp\cA)$, then there exists a finitely presented object $M\in\cC$ such that $\Hom(M,E)\neq 0$. If $f:M\To E$ is a non-zero morphism, then $\Im f\in\cC$ and $E=E(\Im f)$; and hence $P(E)\in\Supp_d(\cC)$.

(2): Let $E\in P^{-1}(\Supp(\cC))\cap \Sp\cA$. Since $P(E)$ is simple, by \cite[Proposition 2.14]{Kr2024}, there exists  $M\in\cC$ such that $P(E)\in\Supp(M)=\langle P(M)\rangle$. Then $E$ is a direct summand of $E(M)$ so that $\Hom(M,E)\neq 0$. Since $\cC$ is of finite type, $M=\underset{\rightarrow}{\rm lim}M_i$ where $M_i\in\cC\cap \fp\cA$ for each $i$. Hence there exists some $i$ such that  $\Hom(M_i,E)\neq 0$. This implies that $E\in\cO(M_i)\subseteq \cO(\cC\cap\fp\cA)$. The converse follows from (1) easily. 
\end{proof}

\medskip

Let $\cE(\Sp\cA)$ be the class of essentially closed subcategories of $\cA$ generated by open subsets of $\cO(\cS)$ where $\cS\in{\bf L}(\fp\cA)$. Then $\cE(\Sp\cA)$ is partially ordered by inclusion. For every $\cC_1,\cC_2\in \cE(\Sp\cA)$, there exist $\cS_1,\cS_2\in{\bf L}(\fp\cA)$ such that $\cC_i=\langle\cO(\cS_i)\rangle$ for $i=1,2$. We define $$\cC_1\vee\cC_2:=\langle\cO(\cS_1\vee\cS_2)\rangle=\langle\cO(\cS_1\cup\cS_2)\rangle.$$
We observe that $\cE(\Sp\cA)$ admits arbitrary joins so that it is a complete lattice.
\medskip
\begin{Lemma}\label{prp1}
Let $\cS_1,\cS_2\in{\bf L}(\fp\cA)$ and let $\cC_1=\langle \cO(\cS_1)\rangle$ and $\cC_2=\langle \cO(\cS_2)\rangle$. Then
\begin{enumerate}
\item $\langle\cO(\cS_1\cap\cS_2)\rangle= \langle\cO(\cS_1)\rangle\cap \langle\cO(\cS_2)\rangle$.

\item $\langle\cO(\cS_1\vee\cS_2)\rangle= \langle\cO(\cS_1)\rangle\vee \langle\cO(\cS_2)\rangle$.
\item $P(\cC_1\cap\cC_2)=P(\cC_1)\cap P(\cC_2)$.

\item $P(\cC_1\vee\cC_2)=P(\cC_1)\vee P(\cC_2)$.
\end{enumerate}
\end{Lemma}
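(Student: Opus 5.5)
The plan is to reduce everything to statements about indecomposable injectives, i.e.\ points of $\Sp\cA$, and then use Herzog's results on the Ziegler topology. The basic tool is the lattice isomorphism of \cite[Theorem 3.3]{Kr2024} between essentially closed subcategories of $\cA$ and ${\bf L}(\Spec\cA)$, given by $\cC\mapsto P(\cC)$ with inverse $P^{-1}$. Because it is an isomorphism of complete lattices, it preserves arbitrary meets (intersections) and joins. So (3) and (4) will follow formally from the fact that $P$ restricted to essentially closed subcategories is a lattice isomorphism. For (4) we also use the definition $\cC_1\vee\cC_2=\langle\cO(\cS_1\cup\cS_2)\rangle$, which is the join of $\cC_1$ and $\cC_2$ among essentially closed subcategories. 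Hence the real content lies in (1) and (2).

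For (2), I would argue directly from the definitions. Since $\cO(\cS_1\cup\cS_2)=\cO(\cS_1)\cup\cO(\cS_2)$, we get
\[\langle\cO(\cS_1\cup\cS_2)\rangle=\langle\cO(\cS_1)\cup\cO(\cS_2)\rangle=\langle\cO(\cS_1)\rangle\vee\langle\cO(\cS_2)\rangle.\]
The first equality is then supplied by $\cO(\cS_1\vee\cS_2)=\cO(\sqrt{\cS_1\cup\cS_2})=\cO(\cS_1\cup\cS_2)$, using \cite[Propositions 3.2 and 3.3]{He1997} exactly as in the proof of \cref{prp}(2)(i).

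For (1), the inclusion $\subseteq$ is clear because $\cO$ is monotone. For $\supseteq$, I would transport the question to $\Spec\cA$ via $P$. There $\langle P(\cO(\cS))\rangle=\SSupp(\cS)$, and \cref{prp}(5) gives $\SSupp(\cS_1\cap\cS_2)=\SSupp(\cS_1)\cap\SSupp(\cS_2)$. Applying $P^{-1}$ and \cite[Theorem 3.3]{Kr2024}, which says $P^{-1}$ preserves intersections and $P^{-1}\langle P(\cX)\rangle=\langle\cX\rangle$, yields $\langle\cO(\cS_1)\rangle\cap\langle\cO(\cS_2)\rangle=\langle\cO(\cS_1\cap\cS_2)\rangle$.

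The main obstacle is justifying \cref{prp}(5) at the level of indecomposable injectives. The key fact is that an indecomposable injective $E$ lies in both $\cO(\cS_1)$ and $\cO(\cS_2)$ if and only if it lies in $\cO(\cS_1\cap\cS_2)$, which is Herzog's statement that these open sets form a basis closed under finite intersections \cite[Theorem 3.4]{He1997}. To pass from points to essentially closed subcategories, I would use that any localizing subcategory of $\Spec\cA$ splits into discrete and continuous parts, and that both sides are generated by points, so their continuous parts vanish. Checking membership of simple objects $P(E)$ therefore suffices, as in the proof of \cref{prp}(5). I expect the delicate point to be ensuring that an essentially closed subcategory generated by a set of indecomposable injectives contains no further indecomposable injectives beyond that set. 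For this I would invoke \cite[Proposition 2.14]{Kr2024}, which shows that a simple object in a join is a summand of one generator.
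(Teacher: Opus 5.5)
Your proposal is correct and follows essentially the paper's route: the paper's one-line proof rests on the same two facts you use, namely $\cO(\cS_1\cap\cS_2)=\cO(\cS_1)\cap\cO(\cS_2)$ (Herzog's Theorem 3.4) for (1) and $\cO(\cS_1\vee\cS_2)=\cO(\cS_1)\cup\cO(\cS_2)$ for (2), with (3) and (4) left as formal consequences. What you add is the detail the paper calls straightforward: Krause's lattice isomorphism $\cC\mapsto P(\cC)$ between essentially closed subcategories and ${\bf L}(\Spec\cA)$, and the check on simple objects $P(E)$ needed to pass from the identity on points to the generated subcategories $\langle\cO(\cS_i)\rangle$.
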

\begin{proof}
The proofs are straightforward using the fact that $\cO(\cS_1\cap\cS_2)=\cO(\cS_1)\cap \cO(\cS_2)$ by \cite[Theorem 3.4]{He1997} and $\cO(\cS_1\vee\cS_2)=\cO(\cS_1)\cup\cO(\cS_2)$.
\end{proof}

\medskip
\begin{Theorem}\label{char1}
There are the lattice isomorphisms:

\[\begin{tikzcd} 
\Lf(\cA)  \arrow[rightarrow,yshift=.75ex,rr,"(-)\cap\fp\cA"]
 	&& \arrow[rightarrow,yshift=-.75ex,ll,"\overrightarrow{(-)}"] 
 	\bL(\fp\cA) \arrow[rightarrow,yshift=.75ex,rr,"\SSupp(-)"]
 	&& \arrow[rightarrow,yshift=-.75ex,ll,"\SSupp^{-1}(-)"] 
 	\bL(\dSpec\cA) \arrow[rightarrow,yshift=.75ex,rr,"P^{-1}(-)"]
 	&& \arrow[rightarrow,yshift=-.75ex,ll,"P(-)"] 
 	\cE(\Sp\cA)
\end{tikzcd}\]

\end{Theorem}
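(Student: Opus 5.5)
The proof splits along the three arrows of the diagram; I would treat each pair of maps separately and then compose.

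\emph{First pair.} The isomorphism $\Lf(\cA)\leftrightarrows\bL(\fp\cA)$, given by $(-)\cap\fp\cA$ and $\overrightarrow{(-)}$, is exactly \cref{iserre}. So nothing new is needed: it is already a lattice isomorphism preserving arbitrary joins and meets.

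\emph{Second pair.} For $\SSupp$ and $\SSupp^{-1}$ I would argue as follows. By \cref{prp}(1), $\SSupp^{-1}(\cU)$ is a Serre subcategory of $\fp\cA$ for every $\cU\in\bL(\dSpec\cA)$, so the map lands in $\bL(\fp\cA)$. For $\cS\in\bL(\fp\cA)$, \cref{prp}(2)(iii) gives $\SSupp^{-1}(\SSupp(\cS))=\sqrt{\cS}=\cS$. Conversely, any $\cU\in\bL(\dSpec\cA)$ has the form $\SSupp(\cS)$ by definition, so $\SSupp(\SSupp^{-1}(\cU))=\SSupp(\SSupp^{-1}(\SSupp(\cS)))=\SSupp(\cS)=\cU$. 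Both maps are plainly inclusion preserving, which already makes them mutually inverse order isomorphisms and hence lattice isomorphisms. Compatibility with the binary operations is \cref{prp}(3)--(6). For arbitrary joins I would use \cref{prp}(2)(ii) together with the Remark following \cref{prp}.

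\emph{Third pair.} This is the step I expect to be the main obstacle: $P^{-1}$ and $P$ between $\bL(\dSpec\cA)$ and $\cE(\Sp\cA)$. The tool is Krause's \cite[Theorem 3.3]{Kr2024}, which says that $P$ and $P^{-1}$ give mutually inverse lattice isomorphisms between essentially closed subcategories of $\cA$ and ${\bf L}(\Spec\cA)$. The task is then to check that this restricts to the stated subsets. Take $\cU=\SSupp(\cS)=\langle P(\cO(\cS))\rangle$. The subcategory $P^{-1}(\cU)$ is essentially closed, and it is the smallest essentially closed subcategory containing $\cO(\cS)$. Indeed, $\langle\cO(\cS)\rangle$ is essentially closed and contains $\cO(\cS)$, so $P(\langle\cO(\cS)\rangle)$ is localizing and contains $P(\cO(\cS))$; applying Krause's bijection then gives $P^{-1}(\cU)=\langle\cO(\cS)\rangle\in\cE(\Sp\cA)$. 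Conversely, $P(\langle\cO(\cS)\rangle)=\langle P(\cO(\cS))\rangle=\SSupp(\cS)$, by the same bijection together with the fact that it commutes with generation. Hence the two maps restrict to inverse bijections. They preserve meets and joins by \cref{prp1}(3),(4), together with the identity $\cC_1\vee\cC_2=\langle\cO(\cS_1\cup\cS_2)\rangle$ and \cref{prp}(3).

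The delicate point is precisely this identity $\langle P(\cX)\rangle=P(\langle\cX\rangle)$ for a class $\cX$ of injectives. I would deduce it from the order isomorphism of \cite[Theorem 3.3]{Kr2024} by a minimality argument on both sides.

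Composing the three isomorphisms then yields the full diagram.
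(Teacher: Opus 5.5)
Your proposal is correct and follows the same route as the paper. The first pair is \cref{iserre} (Krause's Corollary 2.10). The second rests on Herzog's theorem that $\cO(M)\subseteq\cO(\cS)$ forces $M\in\cS$ for a Serre subcategory $\cS$, which is what \cref{prp}(2)(iii) packages. The third is the restriction of \cite[Theorem 3.3]{Kr2024}. Your minimality argument for $P(\langle\cX\rangle)=\langle P(\cX)\rangle$ and your remark that mutually inverse inclusion-preserving bijections are automatically lattice isomorphisms simply make precise what the paper calls straightforward.
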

\begin{proof}
\cref{four,prp,prp1} show that all maps are lattices morphisms.

The first isomorphisms follows from [Kr, Corollary 2.10].

 For the second morphisms: It is clear that $\cS\subseteq\SSupp^{-1}(\SSupp(\cS))$. Given $M\in\SSupp^{-1}(\SSupp(\cS))$, we have  $\SSupp(M)\subset\SSupp(\cS)$. Then
 $\cO(M)\subset \cO(\cS)$ and so $M\in\cS$ by \cite[Theorem 3.8]{He1997}. The equality $\cU=\SSupp(\SSupp^{-1}(\cU)$ is straightforward.
 
   To prove the third morphisms, for any $\cU\in\bL(\dSpec\cA)$,  $P^{-1}(\cU)$ is the essentially subcategory of $\cA$ generated by $\cO(\SSupp^{-1}(\cU))$ and for any  essentially subcategory $\langle \cO\rangle$ in $\cE(\Zg\cA)$, we have $P(\langle \cO\rangle)=\SSupp(\cS_{\cO})$, where $\cS_{\cO}=\{M\in\fp\cA|\hspace{0.1cm}\cO(M)\subseteq \cO\}$. These maps provide mutually inverse bijection between $\bL(\dSpec\cA)$ and $\cE(\Zg\cA)$. 
\end{proof}

The following lemma has been mentioned in the context of \cite{Kr2024}. However, we include  it due to its importance in the subsequent results.

\medskip

\begin{Lemma}\label{ort}
Let $\cL$ be a localizing subcategory of finite type of  $\fp\cA$. Then ${\cL}^\bot$ is an essentially closed subcategory.
\end{Lemma}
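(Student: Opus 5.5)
To prove \cref{ort}, I will read the hypothesis as saying that $\cL$ is a localizing subcategory of finite type of $\cA$ (for instance $\cL=\overrightarrow{\cS}$ for a Serre subcategory $\cS$ of $\fp\cA$, as in \cref{iserre}). The task is then to check directly the three closure properties in the definition of an essentially closed subcategory: coproducts, subobjects and essential extensions.

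\emph{Subobjects.} Let $N\leq M$ with $M\in\cL^\bot$ and $L\in\cL$. Since $\Hom(L,-)$ is left exact, $\Hom(L,N)\subseteq\Hom(L,M)=0$. So $N\in\cL^\bot$.

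\emph{Essential extensions.} Let $N\subseteq M$ be essential with $N\in\cL^\bot$, and suppose some $f\colon L\to M$ with $L\in\cL$ is nonzero.
\begin{itemize}
\item Because $\cL$ is closed under quotients, $\Im f\in\cL$ and $\Im f\neq 0$.
\item By essentiality, $\Im f\cap N\neq 0$.
\item Because $\cL$ is closed under subobjects, $\Im f\cap N\in\cL$.
\item This gives a nonzero morphism $\Im f\cap N\to N$ from an object of $\cL$, contradicting $N\in\cL^\bot$.
\end{itemize}
Hence $\Hom(L,M)=0$ for all $L\in\cL$, and $M\in\cL^\bot$. Equivalently, $t_\cL(M)\cap N=t_\cL(N)=0$, so $t_\cL(M)=0$.

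\emph{Coproducts.} This is the only step needing the finite type hypothesis, since for an arbitrary localizing subcategory $\cL^\bot$ is closed under products but not obviously under coproducts. Let $M_\alpha\in\cL^\bot$. The coproduct $\coprod_\alpha M_\alpha$ is the filtered colimit of the finite coproducts $\bigoplus_{\alpha\in F}M_\alpha$. Each finite coproduct is a finite product, so it lies in $\cL^\bot$. It then suffices that $\cL^\bot$ is closed under filtered colimits. This is exactly the equivalent characterization of finite type recalled in the definition: the torsion-free class $\cL^\bot$ is closed under direct limits.

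If one prefers to use the radical description instead, I would argue as follows. Write $M=\varinjlim M_i$ with all $M_i\in\cL^\bot$. Finite type means $t_\cL$ commutes with direct limits, so $t_\cL(M)=\varinjlim t_\cL(M_i)=0$. Alternatively, using $\cL=\overrightarrow{\cL\cap\fp\cA}$ from \cite[Corollary 2.10]{Kr1997}: if $\Hom(L,M)\neq0$ for some $L\in\cL$, this is witnessed by some $L'\in\cL\cap\fp\cA$. Since $L'$ is finitely presented, $\Hom(L',-)$ commutes with filtered colimits, so $\Hom(L',M)=\varinjlim\Hom(L',M_i)=0$, a contradiction.

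The main obstacle is only this coproduct step. I would rely on the finite type characterization together with the finitely presented generators $\cL\cap\fp\cA$, as just described.
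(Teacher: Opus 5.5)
Your proof is correct and follows essentially the paper's route. Subobjects come from left exactness of $\Hom(L,-)$, and essential extensions come from the closure properties of $\cL$; the paper simply cites that $\cL^\bot$ is closed under injective envelopes. Coproducts come from finite type, and your last variant, using $\cL=\overrightarrow{\cL\cap\fp\cA}$ and $\Hom(L',-)$ commuting with direct limits, is exactly the paper's observation $(\cL\cap\fp\cA)^\bot=\cL^\bot$.

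One correction to your commentary: finite type is not needed for coproducts. For any class $\cL$, the subcategory $\cL^\bot$ is closed under products and subobjects. In a Grothendieck category the canonical map $\coprod_\alpha M_\alpha\to\prod_\alpha M_\alpha$ is a monomorphism, being by AB5 a direct limit of the split monomorphisms $\bigoplus_{\alpha\in F}M_\alpha\to\prod_\alpha M_\alpha$. So $\cL^\bot$ is essentially closed for every localizing subcategory $\cL$.
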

\begin{proof}
It is clear that $(\cL\cap\fp\cA)^\bot=\overrightarrow{\cL}^\bot$ and so $\overrightarrow{\cL}^\bot$ is closed under coproducts and subobjects. Moreover, since $\cL$ is localizing, $\cL^\bot$ is closed under injective envelopes so that it is essentially closed. 
\end{proof}

\medskip
 For a localizing subcategory $\cL$ of finite  type of $\cA$, although $\cL$ need not be cohomological stable (see \cite[Remark 5.7(2)]{Kr2024}), we have the following fact.

\begin{Proposition}\label{eqs}
Let  $\cL$  be a localizing subcategory of finite type of  $\cA$. Then $\cL^{\bot_{\geq 0}}$ is cohomologically stable. In particular, $\cL^{\bot_{\geq 0}}=\cA_{P(\cL^\bot)}.$
\end{Proposition}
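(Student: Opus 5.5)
The plan is to identify $\cL^{\bot_{\geq 0}}$ with $\cA_\Phi$ for $\Phi:=P(\cL^\bot)$, and then deduce cohomological stability from \cite[Lemma 5.4]{Kr2024}, exactly as in the proof of the Proposition following \cref{thick}. The statement says that $\cA_\Phi$ is cohomologically stable for every $\Phi\in{\bf L}(\Spec\cA)$, so once the equality $\cL^{\bot_{\geq 0}}=\cA_{P(\cL^\bot)}$ holds, stability follows immediately.

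First I check that $\Phi$ is a localizing subcategory of $\Spec\cA$ with $P^{-1}(\Phi)=\cL^\bot$. By \cref{ort}, $\cL^\bot$ is essentially closed. The lattice isomorphism \cite[Theorem 3.3]{Kr2024} between essentially closed subcategories of $\cA$ and ${\bf L}(\Spec\cA)$ then gives that $\Phi=P(\cL^\bot)$ is localizing and that $P^{-1}(\Phi)=\cL^\bot$. In particular, an injective $I$ lies in $\Inj_\Phi\cA$ if and only if $\Hom(\cL,I)=0$.

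Next I prove the equality $\cL^{\bot_{\geq 0}}=\cA_\Phi$. Take $M\in\cA$ with minimal injective resolution $0\to M\to I^0\to I^1\to\cdots$.

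\emph{The inclusion $\cL^{\bot_{\geq 0}}\subseteq\cA_\Phi$.} Since $\cL$ is localizing, for $L\in\cL$ we have $\Ext^i(L,M)=H^i(\Hom(L,I^\bullet))$. Minimality should force $\Hom(L,I^i)=0$ whenever $\Ext^j(L,M)=0$ for all $j\le i$. Indeed, if $\Hom(L,I^i)\neq 0$, the image of a nonzero map is a nonzero object $L'\in\cL$, and $L'\cap Z^i\neq0$ because $Z^i$ is essential in $I^i$. So $t_\cL(Z^i)\neq 0$. Inductively $Z^i\cong$ cosyzygy with $\Hom(\cL,Z^i)\cong\Ext^i(\cL,M)$ (using $\Hom(\cL,I^{i-1})=0$), contradicting vanishing. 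Hence every $I^i\in\cL^\bot$, so $P(I^i)\in\Phi$, and \cref{exlem} gives $\suppex(M)\subseteq\Phi$.

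\emph{The inclusion $\cA_\Phi\subseteq\cL^{\bot_{\geq 0}}$.} If $\suppex(M)\subseteq\Phi$, then \cref{exlem} gives $P(I^i)\in\Phi$, so each $I^i\in P^{-1}(\Phi)=\cL^\bot$. Hence $\Hom(L,I^\bullet)=0$ and all $\Ext^i(L,M)$ vanish.

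The main obstacle is the inductive step in the first inclusion, namely identifying $\Hom(L,Z^i)$ with $\Ext^i(L,M)$. This uses the left exactness of $\Hom(L,-)$ together with the vanishing of $\Hom(L,I^{i-1})$ and of $\Ext^{\ge1}(L,I^\bullet)$. It is a standard dimension-shifting argument, but it must be carried out carefully. Notably, the finite type hypothesis enters only through \cref{ort}.
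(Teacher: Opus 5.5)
Your proof is correct and follows the paper's route. Like the paper, you prove $\cL^{\bot_{\geq 0}}=\cA_{P(\cL^\bot)}$ via the minimal injective resolution, using \cref{ort} and \cite[Theorem 3.3]{Kr2024} to get $P^{-1}(P(\cL^\bot))=\cL^\bot$, and then obtain cohomological stability from Krause's results on $\cA_\Phi$ (the paper cites \cite[Theorem 5.5]{Kr2024}, you cite Lemma 5.4). Your essential-subobject argument is just an inline form of the paper's step ``$\cL^\bot$ is closed under injective envelopes, so $I^0\in\cL^\bot$, then pass to $Z^1$''; note that $\Ext^i(L,M)=H^i(\Hom(L,I^\bullet))$ holds for every $L$, not only because $\cL$ is localizing.
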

\begin{proof}
In view of \cite[Theorem 5.5]{Kr2024}, it suffices to  prove the equality. Let $M$ be an object in $\cA$ and $0\To M\To I^0\To I^1\To\dots $ be a minimal injective resolution of $M$ with $Z^i=\Ker(I^i\To I^{i+1})$ for each $i>0$. If $M\in\cL^{\bot_{\geq 0}}$, then $I^0\in\cL^{\bot}$ because ${\cL}^\bot$ is essentially closed. The exact sequence $0\To M\To I^0\To Z^1\to 0$ implies that  Hence $Z^1\in\cL^{\bot_{\geq 0}}$. Proceeding by induction, we deduce that $I^i\in\cL^\bot$ for each $i$. Now, \cref{ort} implies that $P(I^i)\in P(\cL^\bot)$ for each $i$ so that $M\in\cA_{P(\cL^\bot)}$. For other inclusion, let $M\in\cA_{P(\cL^\bot)}$. Using again \cref{ort}, we conclude that $I^i\in\cL^\bot$. Consequently, $\Ext^i(\cL,M)=0$ for all $i\geq 0$.
\end{proof}

\begin{Theorem}\label{coch}
The assignment $\cL\longmapsto P(\cL^{\bot})$
induces an inclusion-reversing bijection  
\[{\bf Lf}(\cA)\longrightarrow\{\Phi\in{\bf L}(\Spec \cA)\mid P^{-1}(\Phi)\text{ is closed under products and filtered colimits }\}.\]
The inverse map is given by $\Phi\longmapsto ^\bot(P^{-1}(\Phi))$.
\end{Theorem}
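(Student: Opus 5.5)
The plan is to show that both assignments are well defined and that they are mutually inverse. Inclusion reversal is then immediate: $\cL\subseteq\cL'$ gives $\cL'^\bot\subseteq\cL^\bot$, and $\Phi\subseteq\Phi'$ gives ${}^\bot(P^{-1}(\Phi'))\subseteq {}^\bot(P^{-1}(\Phi))$.

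\textbf{Step 1: the forward map is well defined.} Let $\cL\in\Lf(\cA)$ and set $\Phi=P(\cL^\bot)$. By \cref{ort}, $\cL^\bot$ is essentially closed. By \cite[Theorem 3.3]{Kr2024}, $\Phi$ is a localizing subcategory of $\Spec\cA$ and $P^{-1}(\Phi)=\cL^\bot$. The class $\cL^\bot$ is closed under products, since $\Hom(L,\prod M_i)=\prod\Hom(L,M_i)$. It is closed under filtered colimits because $\cL$ has finite type: the torsion-free class of a finite-type localizing subcategory is closed under direct limits. Alternatively, one can use $\cL^\bot=(\cL\cap\fp\cA)^\bot$ together with the fact that $\Hom(M,-)$ commutes with filtered colimits for $M$ finitely presented.

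\textbf{Step 2: the inverse map is well defined.} Let $\Phi$ satisfy the hypothesis and set $\cF=P^{-1}(\Phi)$. Then $\cF$ is essentially closed by \cite[Theorem 3.3]{Kr2024} and closed under products. As in the proof of \cref{charc}, \cite[Lemma 3.1]{Kr2024} shows that $(\cL,\cF)$ with $\cL={}^\bot\cF$ is a hereditary torsion theory. Hence $\cL$ is localizing and $\cL^\bot=\cF$. Since the torsion-free class $\cF$ is closed under filtered colimits, $\cL$ is of finite type by the definition recalled above. Therefore $\cL\in\Lf(\cA)$.

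\textbf{Step 3: the maps are mutually inverse.} Starting from $\Phi$, Step 2 gives $P(({}^\bot P^{-1}(\Phi))^\bot)=P(P^{-1}(\Phi))=\Phi$, where the last equality comes from \cite[Theorem 3.3]{Kr2024}. Starting from $\cL\in\Lf(\cA)$, Step 1 gives
\[{}^\bot(P^{-1}(P(\cL^\bot)))={}^\bot(\cL^\bot)=\cL.\]
The last equality holds because a localizing subcategory of a Grothendieck category is the torsion class of its hereditary torsion theory \cite[Chap. VI]{St}.

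The main obstacle is the torsion-theory step in Step 2. One must confirm that \cite[Lemma 3.1]{Kr2024} really yields a hereditary torsion theory from a class that is essentially closed and closed under products. This requires $\cF$ to be closed under subobjects, products and injective envelopes, and these properties make it a hereditary torsion-free class. One must also match the paper's definition of finite type (the right adjoint commutes with direct limits) with closure of $\cF$ under filtered colimits. I would cite the stated equivalence in the definition of finite type for this.
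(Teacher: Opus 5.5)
Your proposal is correct and follows the paper's proof essentially step for step. You use \cref{ort} with \cite[Theorem 3.3]{Kr2024} to show $\cL\mapsto P(\cL^\bot)$ is well defined, the torsion-theory lemma of \cite{Kr2024} together with the finite-type criterion (torsion-free class closed under direct limits) for the inverse map, and \cite[Theorem 3.3]{Kr2024} for the round-trip identities. The point you flag as the main obstacle holds: a class closed under subobjects, products and injective envelopes is always the torsion-free class of a hereditary torsion theory \cite[Chap.~VI]{St}, so nothing is missing there.
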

\begin{proof}
It is clear that  $\cL^\bot$ is closed under products. Moreover, since $\cL$ is of finite type, $(\cL\cap\fp\cA)^\bot=\cL^\bot$ is essentially closed by \cref{ort}  and closed under filtered colimits.  Hence the first map is well-defined. Set $\cX=^\bot(P^{-1}(\Phi))$. Since $P^{-1}(\Phi)$ is closed under products, it follows from \cite[Lemma 3.11]{Kr2024} that $(\cX,P^{-1}(\Phi))$ is a torsion theory. Moreover, since $P^{-1}(\Phi)$  is closed under filtered colimits, $\cX$ is a localizing subcategory of finite type and hence the second map is well-defined. Furthermore, it follows from \cite[Theorem 3.3]{Kr2024} that \[P(((^\bot P^{-1}(\Phi))^\bot)=\Phi \quad\text{ and } \quad ^\bot(P^{-1}(P(\cL^\bot)))=\cL.\]
\end{proof}



\begin{Corollary}
 The assignment $\cL\longmapsto P(\cL^{\bot})$
induces inclusion-reversing bijection 
\[\{\cL\in{\Lf}(\cA)\mid \cL^{\bot_{\leq 1}}\text{ is thick}\} \longrightarrow\{\Phi\in{\bf L}(\Spec \cA)\mid  \Phi \text{ is coherent and }\]\[  P^{-1}(\Phi)\text{ is closed under products and  filterred colimits}\}.\]
The inverse map is given by $\Phi\longmapsto ^\bot(P^{-1}(\Phi))$.
\end{Corollary}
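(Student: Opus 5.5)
The plan is to restrict the bijection of \cref{coch} to the two indicated subsets. Concretely, I will show that for $\cL\in\Lf(\cA)$ with $\Phi=P(\cL^\bot)$, the condition that $\cL^{\bot_{\leq 1}}$ is thick holds if and only if $\Phi$ is coherent. Since \cref{coch} already gives mutually inverse, inclusion-reversing maps $\cL\mapsto P(\cL^\bot)$ and $\Phi\mapsto{}^\bot(P^{-1}(\Phi))$ between $\Lf(\cA)$ and the $\Phi$ with $P^{-1}(\Phi)$ closed under products and filtered colimits, this equivalence is all that is needed. With it, both maps send one restricted set into the other, and bijectivity is inherited.

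First I record the setup. For $\cL\in\Lf(\cA)$, \cref{coch} says $\Phi=P(\cL^\bot)$ has $P^{-1}(\Phi)$ closed under products. Moreover, ${}^\bot(P^{-1}(\Phi))=\cL$, and since $\cL^\bot$ is essentially closed (\cref{ort}), \cite[Theorem 3.3]{Kr2024} gives $P^{-1}(\Phi)=\cL^\bot$. The hypotheses of \cref{charc} are therefore satisfied, with its $\cL$ being our $\cL$. That theorem then yields that $\Phi$ is coherent if and only if $\cL^{\bot_{\leq 1}}=\cL^{\bot_{\geq 0}}$, if and only if $\cL^{\bot_{\geq 0}}$ is thick.

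The remaining step, and the main point to check, is that thickness of $\cL^{\bot_{\leq 1}}$ is equivalent to these conditions.

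Suppose $\Phi$ is coherent. Then $\cL^{\bot_{\leq 1}}=\cL^{\bot_{\geq 0}}$, and the latter is thick by \cref{charc}, so $\cL^{\bot_{\leq 1}}$ is thick.

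Conversely, suppose $\cL^{\bot_{\leq 1}}$ is thick. I will verify coherence directly, mimicking (3)$\Rightarrow$(1) of \cref{charc}. Take $I^0\to I^1$ in $\Inj_\Phi\cA$. Then $I^0,I^1\in P^{-1}(\Phi)=\cL^\bot$, and being injective they lie in $\cL^{\bot_{\leq 1}}$. By thickness, $C=\Coker(I^0\to I^1)\in\cL^{\bot_{\leq 1}}\subseteq\cL^\bot$. Since $\cL^\bot$ is essentially closed, $E(C)\in\cL^\bot$, so $P(E(C))\in\Phi$. This gives the required exact sequence $I^0\to I^1\to E(C)$ with $E(C)\in\Inj_\Phi\cA$.

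Combining both directions with \cref{coch} yields the asserted inclusion-reversing bijection. Its inverse is the restriction of $\Phi\mapsto{}^\bot(P^{-1}(\Phi))$. I expect no serious obstacle. The only care needed is confirming $P^{-1}(P(\cL^\bot))=\cL^\bot$, so that \cref{charc} applies with the same $\cL$; this follows from the essential closedness supplied by \cref{ort}.
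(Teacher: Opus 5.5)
Your proposal is correct and takes the same route as the paper: you restrict the bijection of \cref{coch} by means of \cref{charc}. You are in fact more careful than the paper, which cites \cref{charc} directly even though condition (3) there concerns $\cL^{\bot_{\geq 0}}$ rather than $\cL^{\bot_{\leq 1}}$; your direct argument that thickness of $\cL^{\bot_{\leq 1}}$ forces coherence, together with (1)$\Rightarrow$(2)$\Rightarrow$(3) of \cref{charc} for the converse, supplies exactly the link the paper leaves implicit.
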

\begin{proof}
 It follows from \cref{charc} that $P(\cL^\bot)$ is coherent and  $(^\bot P^{-1}(\Phi))^{\bot_{\leq 1}}$ is thick. Hence in view of \cref{coch}, the maps are is well-defined   and \[P(((^\bot P^{-1}(\Phi))^\bot)=\Phi \quad\text{ and } \quad ^\bot(P^{-1}(P(\cL^\bot)))=\cL.\]
\end{proof}


\section{In the case of locally noetherian categories}

A Grothendieck category $\cA$ is said to be {\it locally noetherian} if it has a small generating set of noetherian objects. Throughout this section $\cA$ is a locally noetherian category.

Krause \cite[Theorem 5.5]{Kr2024} proved that there is a lattice isomorphism between cohomologically stable subcategories of $\cA$  and localizing subcategories of ${\bf L}(\Spec\cA)$.  Over a commutative noetherian ring $A$, an analogous result has already been proved  by Takahashi \cite[Theorem 2.8]{Ta2009} where he introduced $E$-stable subcategories of $\Mod A$ and established a bijection between $E$-stable subcategories closed under direct summand and direct sums and subsets of $\Spec A$. By the definition, cohomologically stable categories are closed under direct summands and the following proposition shows that they cover the other  condition on $E$-stable subcategories if $\cA$ is locally noetherian.
  
\begin{Proposition}\label{dirsum}
If $\Phi\in{\bf L}(\Spec\cA)$, then $\suppex^{-1}(\Phi)$ is closed under direct sums. In particular, every cohomologically stable subcategory of $\cA$ is closed under direct sums.
\end{Proposition}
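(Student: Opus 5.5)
Fix a family $(M_\alpha)$ in $\suppex^{-1}(\Phi)$ and take a minimal injective resolution $0\To M_\alpha\To I_\alpha^0\To I_\alpha^1\To\cdots$ of each $M_\alpha$. By \cref{exlem}, the hypothesis $\suppex(M_\alpha)\subseteq\Phi$ means exactly that $P(I_\alpha^n)\in\Phi$ for all $n$ and $\alpha$. The plan is to pass to the direct sum of these resolutions and use local noetherianity to keep everything injective.

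\emph{Step 1: summing the resolutions.} In a locally noetherian category, arbitrary coproducts of injective objects are injective; this is the Grothendieck-category version of the Bass--Papp theorem. Coproducts are exact in a Grothendieck category, so
\[0\To\coprod_\alpha M_\alpha\To\coprod_\alpha I_\alpha^0\To\coprod_\alpha I_\alpha^1\To\cdots\]
is an injective resolution of $M=\coprod_\alpha M_\alpha$.

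\emph{Step 2: passing to the minimal resolution.} This resolution need not be minimal, so \cref{exlem} cannot be applied to it directly. To handle this, I would use the standard fact that any injective resolution $J^\bullet$ of $M$ is isomorphic to the direct sum of the minimal resolution $E^\bullet$ and a split exact complex of injectives; this is the uniqueness part of \cite[Proposition B.2]{Kr2005}. Hence each $E^n$ is a direct summand of $J^n=\coprod_\alpha I_\alpha^n$.

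Alternatively, one can argue directly. Since $Z^n=\coprod_\alpha Z_\alpha^n$ is essential in $\coprod_\alpha I_\alpha^n$, because each $Z_\alpha^n$ is essential in $I_\alpha^n$, the summed resolution is in fact already minimal. I expect this direct check to be the cleanest route.

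\emph{Step 3: membership in $\Phi$.} Because $P$ commutes with coproducts of injectives (\cite{Kr2024}), $P(\coprod_\alpha I_\alpha^n)=\coprod_\alpha P(I_\alpha^n)$. This lies in $\Phi$, since $\Phi$ is localizing and hence closed under coproducts and direct summands. Therefore every term of the minimal resolution of $M$ lies in $\Inj_\Phi\cA$, and \cref{exlem} gives $\suppex(M)\subseteq\Phi$.

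\emph{The ``in particular''.} By \cite[Theorem 5.5]{Kr2024}, every cohomologically stable subcategory has the form $\cA_\Phi=\suppex^{-1}(\Phi)$ for some $\Phi$, so the first part applies.

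\emph{Main obstacle.} The delicate points are the injectivity of $\coprod_\alpha I_\alpha^n$ and the compatibility of $P$ with coproducts. For the latter, the key is that $\Hom_{\Spec\cA}$ between injectives is $\Hom_\cA$ modulo maps with essential kernel. Both are standard, and local noetherianity is used precisely to guarantee the injectivity.
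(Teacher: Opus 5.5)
Your proposal is correct and is essentially the paper's proof: the paper sums the minimal resolutions, observes that $\coprod_\alpha Z^i_\alpha$ is essential in $\coprod_\alpha I^i_\alpha$ so that the sum is already minimal (your ``direct check''), uses that $P$ preserves coproducts, and deduces the ``in particular'' from \cite[Theorem 5.5]{Kr2024}. One addition on your side is worth keeping: you state explicitly that local noetherianity is what makes $\coprod_\alpha I^n_\alpha$ injective, a point the paper uses only implicitly through the standing hypothesis of that section.
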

\begin{proof}
 Let $(M_\alpha)$ be a family of objects in $\suppex^{-1}(\Phi)$ and  
$0\To M_{\alpha}\To I^0_{\alpha}\To I^1_\alpha\to \dots$ be the minimal injective resolution of $M_\alpha$ with $Z^i_\alpha=\Ker (I^i_\alpha\to I^{i+1}_\alpha)$ is essential in $I_\alpha^i$. Hence $\coprod_\alpha Z_\alpha^i$     is essential in $\coprod_\alpha I_\alpha^i$  for each $i$ so that $$0\To\coprod M_{\alpha}\To \coprod I^0_{\alpha}\To \coprod I^1_\alpha\to \dots$$ is a minimal injective resolution of $\coprod M_\alpha$. Since $P$ preserves coproducts, we have $P(\coprod_\alpha I_\alpha^i)\cong \coprod_\alpha P(I_\alpha^i)\in \Phi$ for each $i$. This implies that $\suppex(\coprod_\alpha M_\alpha)\subseteq\Phi$; and consequently $\coprod_\alpha M_\alpha\in\suppex^{-1}(\Phi)$. To prove the second claim, if $\cC$ is a cohomologically stable, it follows from \cite[Theorem 5.5]{Kr2024} that there exists $\cU\in{\bf L}(\cA)$ such that $\cC=\suppex^{-1}(\cU)$. Hence the claim follows by first part.
\end{proof}

 The earlier version of the following theorem has been proved by Krause \cite{Kr2008} for $\Mod A$ where $A$ was a commutative noetherian ring. The following theorem was proved by Wu and Ma \cite[Theorem 3.6]{WM2024}, where they defined coherent subsets in $\Sp\cA$,  the set of isomorphism classes of indecomposable injective objects in $\cA$. We reprove the theorem, where the classification is based on  coherent subcategories of $\Spec\cA$. 
\medskip

\begin{Theorem}
 The assignment $\cT\longmapsto \suppex(\cT)$ induces a lattice isomorphism 
\[\{ \text{thick subcategories of }\cA \text{ closed under direct sums and  injective envelopes} \}\]\[\longiso\{\text{ coherent subcategories  of }\Spec\cA\}.\]
The inverse map is given by $\Phi\longmapsto \suppex^{-1}(\Phi)$.
\end{Theorem}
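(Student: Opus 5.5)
Both maps are well defined by earlier results. We then check that the two composites are identities, and finally that the maps preserve order, which suffices for a lattice isomorphism.

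\emph{Well-definedness.} Let $\cT$ be a thick subcategory closed under direct sums and injective envelopes. Then $\suppex(\cT)$ is coherent by \cref{thick}(2). Conversely, let $\Phi$ be coherent. \cref{thi} shows that $\suppex^{-1}(\Phi)$ is thick and closed under injective envelopes, and \cref{dirsum} shows that it is closed under direct sums; the latter is the place where local noetherianity is used.

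\emph{First composite.} We show $\cT=\suppex^{-1}(\suppex(\cT))$. If $M\in\cT$, then $\suppex(M)\subseteq\suppex(\cT)$ by the definition of $\suppex(\cT)$ as a join. The reverse inclusion is exactly \cref{thick}(1). This is the equality $\cT=\cA_{\suppex(\cT)}$ already established in the proof of the Proposition preceding \cref{krau}.

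\emph{Second composite.} We show $\Phi=\suppex(\suppex^{-1}(\Phi))$; this is the step that needs the most care.
\begin{enumerate}
\item The inclusion $\suppex(\suppex^{-1}(\Phi))\subseteq\Phi$ is immediate, since each $\suppex(M)\subseteq\Phi$ for $M\in\suppex^{-1}(\Phi)$.
\item For $\Phi\subseteq\suppex(\suppex^{-1}(\Phi))$, I would use that $\Phi$ is a localizing subcategory of the spectral category. Hence every object of $\Phi$ is, up to isomorphism, $P(I)$ for some injective $I$: any $X$ with $P(X)\in\Phi$ has $P(X)\cong P(E(X))$.
\item For such an $I$, the minimal injective resolution is $0\to I\to I\to 0$. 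So by \cref{exlem}, $\suppex(I)=\langle P(I)\rangle\subseteq\Phi$, giving $I\in\suppex^{-1}(\Phi)$.
\item Therefore $P(I)\in\suppex(I)\subseteq\suppex(\suppex^{-1}(\Phi))$.
\end{enumerate}
This is the same argument as the last paragraph of the proof of \cref{loc}, restricted to objects of $\cA$.

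\emph{Order and lattice structure.} Both assignments are clearly inclusion preserving, and they are mutually inverse. An order isomorphism between complete lattices automatically preserves meets and joins, so the bijection is a lattice isomorphism.
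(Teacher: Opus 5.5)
Your proposal is correct and follows the paper's proof essentially step for step. Well-definedness comes from \cref{thick}(2), \cref{thi} and \cref{dirsum}; the equality $\Phi=\suppex(\suppex^{-1}(\Phi))$ comes from $\suppex(I)=\langle P(I)\rangle\subseteq\Phi$ for injective $I$; and $\cT=\suppex^{-1}(\suppex(\cT))$ comes from \cref{thick}(1), which is the lemma actually needed even though the paper's text attributes this step to \cref{thi}.
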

\begin{proof}
It follows from  \cref{thi} and \cref{dirsum} that $\suppex^{-1}(\Phi)$ is thick a subcategory of $\cA$ closed under direct sums and injective envelopes. On the other hand, $\suppex(\cT)$ is a coherent subset of $\Spec\cA$ by \cref{thick}. Clearly  $\suppex(\suppex^{-1}(\Phi))\subset \Phi$. For other side, if $P(I)\in\Phi$ for an injective object $I$, then $\suppex(I)=\langle P(I)\rangle\subset \Phi$ so that $P(I)\in\suppex(\suppex^{-1}(\Phi))$. Consequently $$\suppex(\suppex^{-1}(\Phi))=\Phi.$$ The equality $\suppex^{-1}(\suppex(\cT))=\cT$ is clear by using \cref{thi}.
\end{proof}

\medskip
Let $A$ be a commnutative noetherian ring and $\Mod A$ be the category of $A$-modules. For every $\Phi\in{\bf L}(\Spec\Mod A)$, set $\Ass(\Phi)=\bigcup_{P(I)\in\Phi}\Ass (I)$. We recall from \cite{Kr2008} that a subset $\Psi$ in $\Spec A$ is {\it coherent} if every morphism $I^0\To I^1$ of injective modules with $\Ass(I^0),\Ass(I^1)\subset\Psi$ can be fitted into an exact srquence of injective modules $I^0\To I^1\to I^2$ such that $\Ass(I^2)\subset\Psi$. The following result shows that coherent subcategories of ${\bf L}(\Spec\Mod A)$ agrees with the coherent subsets of $\Spec A$.
\medskip
\begin{Proposition}
Let $A$ be a noetherian commutative ring.
 The assignment $\Phi\longmapsto \Ass(\Phi)$ induces an inclusion preserving bijection between coherent subcategories of ${\bf L}(\Spec\Mod A)$ and cohernet subsets of $\Spec A$.

 The inverse map is given by $\Psi\longmapsto \langle P(A/\Psi)\rangle$ where $A/\Psi=\{A/\frak p\mid \frak p\in\Psi\}$.
\end{Proposition}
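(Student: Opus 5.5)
The plan is to reduce everything to the classical Matlis description of injectives over a commutative noetherian ring $A$. Every injective module decomposes as $I\cong\coprod_{\frak p}E(A/\frak p)^{(\mu_{\frak p})}$, and $\Ass(I)$ is exactly the set of primes with $\mu_{\frak p}\neq 0$. In $\Spec\Mod A$ the simple objects are the $P(E(A/\frak p))$. By \cite[Proposition 2.14]{Kr2024} and \cite[Theorem 3.3]{Kr2024}, a localizing subcategory $\Phi$ of $\Spec\Mod A$ consists of direct summands of coproducts of its simple objects, since $\Mod A$ is locally noetherian and so $\Spec\Mod A$ is discrete. It is therefore determined by the set of primes $\frak p$ with $P(E(A/\frak p))\in\Phi$, and that set is precisely $\Ass(\Phi)$.

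The key equivalence to establish first is, for an injective $I$,
\[P(I)\in\Phi \Longleftrightarrow \Ass(I)\subseteq\Ass(\Phi).\]
The forward direction is immediate from the definition of $\Ass(\Phi)$. For the converse, $\frak p\in\Ass(\Phi)$ means $\frak p\in\Ass(J)$ for some $J$ with $P(J)\in\Phi$. Then $E(A/\frak p)$ is a direct summand of $J$, so $P(E(A/\frak p))\in\Phi$ because $\Phi$ is closed under summands. Since $P$ preserves coproducts and $\Phi$ is closed under coproducts, $P(I)\in\Phi$.

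We also need $\langle P(A/\Psi)\rangle$ to correspond to $\Psi$. Because $A/\frak p\to E(A/\frak p)$ is an essential monomorphism, $P(A/\frak p)\cong P(E(A/\frak p))$. Hence $\langle P(A/\Psi)\rangle$ is the localizing subcategory generated by the simples indexed by $\Psi$, and $\Ass(\langle P(A/\Psi)\rangle)=\Psi$. Conversely, $\langle P(A/\Ass(\Phi))\rangle=\Phi$, by the determination of $\Phi$ by its simples. This shows that $\Phi\mapsto\Ass(\Phi)$ is a bijection between all of ${\bf L}(\Spec\Mod A)$ and all subsets of $\Spec A$, and both maps clearly preserve inclusion.

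The main step is to check that coherence corresponds under this bijection. By the key equivalence, $\Inj_\Phi(\Mod A)=\{I\mid \Ass(I)\subseteq\Ass(\Phi)\}$. So the defining condition for $\Phi$ to be coherent (a morphism $I^0\to I^1$ in $\Inj_\Phi$ extends to an exact $I^0\to I^1\to I^2$ with $I^2\in\Inj_\Phi$) is word for word Krause's condition for $\Ass(\Phi)$ to be a coherent subset. Likewise, $\Psi$ coherent implies that $\langle P(A/\Psi)\rangle$ is coherent.

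I expect the only real obstacle to be justifying that $\Phi$ is determined by its simple objects, i.e.\ that $\Spec\Mod A$ has no continuous part. I would argue this as follows. Every injective module is a coproduct of indecomposables $E(A/\frak p)$. Every object $X$ satisfies $P(X)\cong P(E(X))$. Since $P$ preserves coproducts, every object of $\Spec\Mod A$ is then a coproduct of the simples $P(E(A/\frak p))$. A localizing subcategory is closed under summands, and in a spectral category with this decomposition it is determined by which simples it contains.
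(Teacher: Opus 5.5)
Your proposal is correct and follows essentially the same route as the paper. The paper's proof also reduces coherence of $\Phi$ to coherence of $\Ass(\Phi)$ via the implicit equivalence $P(I)\in\Phi \Longleftrightarrow \Ass(I)\subseteq\Ass(\Phi)$ for injective $I$, and it dismisses the inverse equalities as ``straightforward''. You supply the details it omits: the Matlis decomposition showing $\Spec\Mod A$ is discrete, and the fact that $\Phi$ is determined by the primes $\mathfrak{p}$ with $P(E(A/\mathfrak{p}))\in\Phi$.
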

\begin{proof}
We first show that $\Ass(\Phi)$ is a coherent subset of $\Spec A$. Let $I^0\To I^1$ be a morphism of injective modules with $\Ass(I^0),\Ass(I^1)\subset\Ass(\Phi)$. Then $P(I^0),P(I^1)\in\Phi$ and so there exists an injective module $I^2$ such that $P(I^2)\in\Phi$ and $I^0\To I^1\To I^2$ is exact. Therefore $\Ass(I^2)\subseteq \Ass(\Phi)$. Moreover, a standard argument shows that $\{A/\frak p\mid \frak p\in\Psi\}$ is a coherent subcategory of ${\bf L}(\Spec \Mod A)$. 
The equalities $\langle P(A/\Ass(\Phi))\rangle=\Phi$ and $\Ass(\langle P(A/\Psi)\rangle)=\Psi$ are straightforward. 
 
\end{proof}

\medskip
\begin{Proposition}
Let $A$ be a noetherian commutative ring. The assignment $\Phi\longmapsto \Ass(\Phi)$ induces an inclusion preserving bijection between  specialization-closed subcategories of ${\bf L}(\Spec\Mod A)$ and  specialization-closed subsets of $\Spec A$. 

The inverse map is given by $\Psi\longmapsto \langle P(A/\Psi)\rangle$ where $A/\Psi=\{A/\frak p\mid \frak p\in\Psi\}$.
\end{Proposition}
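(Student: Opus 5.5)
The proof runs parallel to the preceding proposition on coherent subcategories, using Matlis theory over a commutative noetherian ring $A$. Every injective $A$-module $I$ decomposes as $I\cong\coprod_{\frak p}E(A/\frak p)^{(\mu_{\frak p})}$, and $\Ass(I)$ is the set of primes that actually occur. By \cite[Proposition 2.14]{Kr2024} and Krause's isomorphism between essentially closed subcategories and ${\bf L}(\Spec\cA)$, an object $P(I)$ lies in $\Phi\in{\bf L}(\Spec\Mod A)$ if and only if each simple summand $P(E(A/\frak p))$ with $\frak p\in\Ass(I)$ lies in $\Phi$. Moreover, $P(E(A/\frak p))=P(A/\frak p)$, since $A/\frak p$ is essential in its envelope. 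Hence $\Phi$ is determined by the set $\Ass(\Phi)$. This gives
\[
P(I)\in\Phi\iff\Ass(I)\subseteq\Ass(\Phi),
\qquad
\Ass(\langle P(A/\Psi)\rangle)=\Psi.
\]
The map $\Phi\mapsto\Ass(\Phi)$ and the map $\Psi\mapsto\langle P(A/\Psi)\rangle$ are then mutually inverse bijections between ${\bf L}(\Spec\Mod A)$ and all subsets of $\Spec A$, and both preserve inclusion. It remains to check that specialization-closedness corresponds on the two sides.

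First let $\Phi$ be specialization-closed, and take $\frak p\in\Ass(\Phi)$ and $\frak q\supseteq\frak p$. The natural surjection $A/\frak p\to A/\frak q$ extends to a nonzero morphism $E(A/\frak p)\to E(A/\frak q)$. Its image contains $A/\frak q$, which is essential in $E(A/\frak q)$, so the image is essential. Since $P(E(A/\frak p))\in\Phi$, the definition gives $P(E(A/\frak q))\in\Phi$, that is, $\frak q\in\Ass(\Phi)$. So $\Ass(\Phi)$ is specialization-closed.

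Conversely, let $\Psi$ be specialization-closed and $\Phi=\langle P(A/\Psi)\rangle$. Take a nonzero $f:I^0\to I^1$ with $\Im f$ essential in $I^1$ and $\Ass(I^0)\subseteq\Psi$. I must show $\Ass(I^1)\subseteq\Psi$. Since $\Im f$ is essential in $I^1$, we have $\Ass(I^1)=\Ass(\Im f)$. So the task reduces to showing that every associated prime of a quotient of $I^0$ lies in $\Psi$. This is the main step. Any $x\in I^0$ generates a submodule $Ax$ with $\Ass(Ax)\subseteq\Psi$. Because $Ax$ is finitely generated, $\Supp(Ax)=V(\Ann x)$ is the union of the closures of the minimal associated primes. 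Specialization-closedness of $\Psi$ then gives $\Supp(Ax)\subseteq\Psi$. Summing over $x$, we get $\Supp(I^0)\subseteq\Psi$.

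Support passes to quotients, so $\Supp(\Im f)\subseteq\Psi$, and therefore $\Ass(\Im f)\subseteq\Supp(\Im f)\subseteq\Psi$. Hence $\Ass(I^1)\subseteq\Psi$, so $P(I^1)\in\Phi$ and $\Phi$ is specialization-closed.
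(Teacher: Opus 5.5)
Your proof is correct and follows the paper's argument. The first direction is identical, and you additionally check that the image of $E(A/\mathfrak{p})\to E(A/\mathfrak{q})$ is essential, which the paper leaves implicit. Your support-based second direction is a globalized form of the paper's element chase: the paper picks $y\in\Im f$ with $\Ann(y)=\mathfrak{q}$, a preimage $x\in I^0$, and then a prime $\mathfrak{p}\in\Ass(Ax)$ with $\mathfrak{p}\subseteq\mathfrak{q}$, using the same fact that minimal primes over $\Ann(x)$ are associated to $Ax$. Your opening paragraph also fills in the equalities $\langle P(A/\Ass(\Phi))\rangle=\Phi$ and $\Ass(\langle P(A/\Psi)\rangle)=\Psi$, which the paper only calls straightforward.
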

\begin{proof}
We first show that $\Ass(\Phi)$ is a specialization-closed subset of $\Spec A$. Let $\frak p,\frak q$ be prime ideals of $A$ such that $\frak p\in\Ass(\Phi)$ such that $\frak p\subset \frak q$. The canonical epimorphism $A/\frak p\To A/\frak q$ induces a nonzero $A$-homomorphism $E(A/\frak p)\To E(A/\frak q)$. The assumption implies that $P(E(A/\frak p))\in\Phi$. Since $\Phi$ is specialization-closed, $P(E(A/\frak q))\in\Phi$; and consequently $\frak q\in \Ass(\Phi)$.

We now show that $\langle P(A/\Psi)\rangle$  is a specialization-closed subcategory of ${\bf L}(\Spec\Mod A)$. Let $I^0\stackrel{f}\To I^1$ be a non-zero $A$-homomorphism of injective modules such that $\Im f$ is an essential submodule of $I^1$ and $P(I^0)\in\langle P(A/\Psi)\rangle$. Then $I^0=\bigoplus_{\frak p\in\Psi} E(A/\frak p)^{\mu_{\frak p}}$ and moreover $I^1=\bigoplus_{\frak q\in\Ass(I^1)} E(A/\frak q)^{\nu_{\frak q}}$. Since $N$ is an essential submodule of $I^1$ for each $E(A/\frak q)$ occurring  in $I^1$, we have $\frak q\in \Ass(\Im f)$. Then there exist  nonzero elements $x\in I^0$ and $y\in N$ such that $f(x)=y$ and $\Ann(y)=\frak q$. Clearly $\Ann(Ax)\subseteq \frak q$ and so there exists  $\frak p\in \Ass(Ax)$, such that $\frak p\subseteq\frak q$. Now, since $\Psi$ is specialization-closed and $\frak p\in\Psi$, we conclude that $\frak q\in\Psi$; and consequently $P(I^1)\in\langle P(A/\Psi)\rangle$. The equalities $\langle P(A/\Ass(\Phi))\rangle=\Phi$ and $\Ass(\langle P(A/\Psi)\rangle)=\Psi$ are straightforward. 
\end{proof}

\medskip

\begin{Remark}
For a commutative noetherian ring $A$, let $\cL$ be a localizing subcategory of ${\bf D}(A)$
closed under homology. In view of the proof of 
\cref{loc}, $\cL\cap\Mod A$ is a thick subcategory of $\Mod A$. Moreover, it follows from \cite[Lemma 3.5]{Kr2008} that $\cL\cap\Mod A$ is closed under injective envelopes. Then \cref{loc} extends a theorem due to Takahashi \cite[Theorem 2.12]{Ta2009} for arbitrary Grothendieck categories.


\end{Remark}



\end{document}